\pgfplotsset{compat=1.16}
\definecolor{sagebrown}{RGB}{176, 92, 10}
\definecolor{sageblue}{RGB}{44, 45, 254}
\definecolor{sagepurple}{RGB}{151, 57, 164}
\definecolor{sagegreen}{RGB}{18, 103, 68}
\definecolor{sagered}{RGB}{170, 16, 15}
\lstdefinestyle{SageMath-style}{
    backgroundcolor=\color{white},   
    commentstyle=\color{sagebrown},
    keywordstyle=\color{sagepurple},
    keywordstyle = [2]{\color{sageblue}},
    keywordstyle = [3]{\color{yellow}},
    numberstyle=\tiny\color{sagegreen},
    stringstyle=\color{sagered},
    basicstyle=\ttfamily\footnotesize,
    breakatwhitespace=false,         
    breaklines=true,                 
    captionpos=b,                    
    keepspaces=true,                 
    numbers=left,                    
    numbersep=5pt,                  
    showspaces=false,                
    showstringspaces=false,
    showtabs=false,                  
    tabsize=2
}
\definecolor{UCRceleste}{RGB}{0,192,243}
\definecolor{Crimson}{RGB}{220, 20, 60}
\definecolor{GaloisBlue}{HTML}{6495ED}
\definecolor{GaloisRed}{HTML}{DE3163}
\definecolor{mintgreen}{RGB}{152,255,152}
\definecolor{pinksalmon}{RGB}{255,102,102}
\definecolor{hueso}{RGB}{245,245,220}
\definecolor{marfil}{RGB}{255,253,208}
\definecolor{amarillo}{RGB}{255,255,0}
\newcommand{\hooklongrightarrow}{\lhook\joinrel\longrightarrow}
\numberwithin{equation}{section}
\newtheorem{theorem}{Theorem}[section]
\newtheorem{lemma}[theorem]{Lemma}
\newtheorem{proposition}[theorem]{Proposition}
\newtheorem{corollary}[theorem]{Corollary}
\newtheorem{conjecture}[theorem]{Conjecture}
\def\moverlay{\mathpalette\mov@rlay}
\def\mov@rlay#1#2{\leavevmode\vtop{%
   \baselineskip\z@skip \lineskiplimit-\maxdimen
   \ialign{\hfil$\m@th#1##$\hfil\cr#2\crcr}}}
\newcommand{\charfusion}[3][\mathord]{
    #1{\ifx#1\mathop\vphantom{#2}\fi
        \mathpalette\mov@rlay{#2\cr#3}
      }
    \ifx#1\mathop\expandafter\displaylimits\fi}
\newcommand{\suchthat}{\;\ifnum\currentgrouptype=16 \middle\fi|\;}
\newcommand{\Z}{\mathbb{Z}}
\newcommand{\C}{\mathbb{C}}
\newcommand{\Q}{\mathbb{Q}}
\newcommand{\Gal}[1]{\operatorname{Gal}#1}
\newcommand{\op}[1]{\operatorname{#1}}
\newcommand{\CM}{\mathcal{CM}}
\newcommand{\sN}{\mathcal{N}}
\theoremstyle{definition}
\newtheorem{remark}[theorem]{Remark}
\newtheorem{definition}[theorem]{Definition}
\newtheorem{example}[theorem]{Example}
\def\O_K{{\Cal{O}_{K}}}
\def\O_F{{\Cal{O}_{F}}}
\def\N_F{{\Cal{N}_{F/\Q}}}
\begin{document}

\title{On the embedding of Galois groups into wreath products}

\author[]{Adrian Barquero-Sanchez\orcidlink{0000-0001-7847-2938} and Jimmy Calvo-Monge\orcidlink{0000-0002-4823-2455}}


\address{Escuela de Matem\'atica, Universidad de Costa Rica, San Jos\'e 11501, Costa Rica}
\email{adrian.barquero\_s@ucr.ac.cr}
\email{jimmy.calvo@ucr.ac.cr}

\subjclass{12F10, 20E22, 11R20}
\keywords{Galois group, wreath product, embedding, Kummer extension}

\begin{abstract}
In this paper we make explicit an application of the wreath product construction to the Galois groups of field extensions. More precisely, given a tower of fields $F \subseteq K \subseteq L$ with $L/F$ finite and separable, we explicitly construct an embedding of the Galois group $\Gal(L^c/F)$ into the regular wreath product $\Gal(L^c/K^c) \wr_r \Gal(K^c/F)$. Here $L^c$ (resp. $K^c$) denotes the Galois closure of $L/F$ (resp. $K/F$). Similarly, we also construct an explicit embedding of the Galois group $\Gal(L^c/F)$ into the smaller sized wreath product $\Gal(L^c/K) \wr_{\Omega} \Gal(K^c/F)$, where $\Omega = \operatorname{Hom}_F(K, K^c)$ is acted on by composition of automorphisms in $\Gal(K^c/F)$. Moreover, when $L/K$ is a Kummer extension we prove a sharper embedding, that is, that $\Gal(L^c/F)$ embeds into the wreath product $\Gal(L/K) \wr_{\Omega} \Gal(K^c/F)$. As corollaries we obtain embedding theorems when $L/K$ is cyclic and when it is quadratic with $\operatorname{char}(F) \neq 2$. We also provide examples of these embeddings and as an illustration of the usefulness of these embedding theorems, we survey some recent applications of these types of results in field theory, arithmetic statistics, number theory and arithmetic geometry.
\end{abstract}

\maketitle

\section{Introduction}

Wreath products of groups constitute an essential construction in group theory and in different applications outside of pure group theory, with recent applications even in areas as unexpected as music theory! (see e.g. \cite{Hoo02}, \cite{Pec09} and the very nice blog post \cite{yistvanblog}). These groups appeared first in the context of permutation groups during the first half of the 20th century, where now they are of fundamental importance (see e.g. \cite{DM96}). They also appear naturally for example in the study of the extension problem for groups, where, along with direct products, they are the most common examples of group extensions. 

In Galois Theory, wreath products have also appeared in different contexts. One instance where they are used prominently is in the problem of realizing certain groups as Galois groups in connection with the inverse Galois problem (see e.g. \cite[Chapter 8]{Vol96}, \cite[Section IV.2]{MM18} and \cite{Gow86}). They have also appeared recently in the study of certain problems in enumerative geometry (see e.g. \cite{EL22}).

Nevertheless, as J. D. P. Meldrum wrote in the preface of his book \cite{Mel95}, \textit{``...in almost all cases where wreath products are used, in book or paper, the author has to develop the theory himself, using material from a variety of sources.''}. This was written in 1995, but it still remains partially valid almost thirty years later, for even though there are now several specialized books on group theory that treat wreath products to a certain extent, these groups are rarely even mentioned in the usual mathematics curriculum, even at the graduate level.

In particular, in this paper we prove several theorems giving different \textit{explicit} embeddings of Galois groups of separable extensions of fields into wreath products of certain associated field extensions. This type of results have important applications in different areas of mathematics. For example, in arithmetic statistics they can be used to study the distribution of number fields in the context of the Malle conjectures (see e.g. \cite{Mal02}, \cite{Klu12} and \cite{BSMT17}); in arithmetic geometry they have been used in defining the notion of \textit{Weyl CM points} on the moduli space $\mathcal{A}_{g, 1}$ of principally polarized abelian varieties (see e.g. \cite{CO12}) and in studying the Colmez conjecture from a probabilistic perspective, which even yielded a conditional proof that the conjecture is true with probability 1 (see e.g. \cite[\S 1.2]{BSM18} and \cite{BSMT17}); in field theory they have been used in characterizing the solvability by radicals of imprimitive polynomials (see e.g. \cite[\S 14.2]{Cox12}). These applications are surveyed in more detail in Section \ref{Applications} of the paper.

Some of the results of this paper are known to some experts in certain areas, possibly in a somewhat different form than the one given here. Some have even appeared in special cases in papers, books or in expository notes, but unfortunately there hasn't been a more complete and unified treatment of these embedding results as far as we know. Thus, we have strived to give general results under a unified approach, making clear use of the different versions of the wreath products involved and highlighting important similarities and differences between the results. We hope that this will help remedy the situation and contribute as a more standard reference, since, as was mentioned in the previous paragraph, this type of results have found important uses in different areas.

\subsection{Statement of main results}

We now proceed to state the main theorems of the paper. The first two theorems (Theorems \ref{Main-Theorem} and \ref{Main-Theorem-2}) are of a general nature and apply to towers of fields $F \subseteq K \subseteq L$ with $L/F$ finite and separable. Then, in Theorem \ref{Kummer-embedding} and its corollaries (Corollaries \ref{Cyclic-embedding} and \ref{Quadratic-embedding}), we put some conditions on the top extension $L/K$ in the tower, which allow us to get sharper embeddings, as will be explained below. The basic background and terminology on wreath products, as it is used in the statements of the theorems below, is given in Section \ref{Wreath-section} of the paper. Thus, our first theorem is the following. It gives an embedding of a Galois group into a regular wreath product, which, according to \cite[p. 5]{Mel95}, is the most commonly used wreath product.

\begin{theorem}\label{Main-Theorem}
	Let $F \subseteq K \subseteq L$ be a tower of field extensions with $L/F$ finite and separable. Denote the Galois closures of $K/F$ and $L/F$ by $K^c$ and $L^c$, respectively. Then the Galois group $\Gal(L^c/F)$ embeds into the regular wreath product $\Gal(L^c/K^c) \wr_r \Gal(K^c/F)$. More precisely, consider the short exact sequence from Galois theory
	$$
		1 \longrightarrow \Gal(L^c/K^c) \overset{\iota}{\longrightarrow} \Gal(L^c/F) \overset{\varepsilon}{\longrightarrow} \Gal(K^c/F) \longrightarrow 1,
	$$
	where $\iota$ is the inclusion and $\varepsilon$ is the restriction map, i.e., if $\rho \in \Gal(L^c/F)$, then $\varepsilon(\rho) = \rho|_{K^c}$.
	Let $s \colon \Gal(K^c/F) \longrightarrow \Gal(L^c/F)$ be a right inverse of $\varepsilon$, i.e., suppose that $\varepsilon \circ s = \op{id}_{\Gal(K^c/F)}$. Consider the map 
	\begin{align}\label{Galois-embedding-1}
		\varphi \colon &\Gal(L^c/F) \hooklongrightarrow \Gal(L^c/K^c) \wr_r \Gal(K^c/F) \\
		&\quad \quad \qquad  \rho \longmapsto (\sigma_{\rho}, \varepsilon(\rho)) \notag
	\end{align}
	with $\sigma_{\rho} \colon \Gal(K^c/F) \longrightarrow \Gal(L^c/K^c)$ defined by 
	$$
	\sigma_{\rho}(\tau) := s(\tau)^{-1} \circ \rho \circ s \big( \varepsilon(\rho)^{-1} \circ \tau \big)
	$$ 
	for every $\tau \in \Gal(K^c/F)$. Then the map $\varphi$ is an injective group homomorphism.
\end{theorem}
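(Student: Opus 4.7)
My plan is to verify, in this order, three claims: (a) well-definedness, i.e., that $\sigma_\rho(\tau)$ really lies in $\Gal(L^c/K^c)$ for every $\tau$; (b) the homomorphism property $\varphi(\rho_1\rho_2) = \varphi(\rho_1)\varphi(\rho_2)$; and (c) injectivity. For (a), since $\varepsilon$ is the restriction map and $\varepsilon \circ s = \op{id}_{\Gal(K^c/F)}$, I would just restrict the defining formula for $\sigma_\rho(\tau)$ to $K^c$: the four factors $s(\tau)^{-1}$, $\rho$, $s(\varepsilon(\rho)^{-1}\tau)$ restrict to $\tau^{-1}$, $\varepsilon(\rho)$, $\varepsilon(\rho)^{-1}\tau$, which compose to $\op{id}_{K^c}$.

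For (b) — the main computation — I would unpack the multiplication in the regular wreath product, namely $(f_1, h_1)(f_2, h_2) = (\tau \mapsto f_1(\tau)\cdot f_2(h_1^{-1}\tau),\, h_1 h_2)$, with $h_1 = \varepsilon(\rho_1)$ and $f_i = \sigma_{\rho_i}$. The verification is a short telescoping identity:
$$\sigma_{\rho_1}(\tau)\cdot \sigma_{\rho_2}\bigl(\varepsilon(\rho_1)^{-1}\tau\bigr) = s(\tau)^{-1}\rho_1\, s\bigl(\varepsilon(\rho_1)^{-1}\tau\bigr)\cdot s\bigl(\varepsilon(\rho_1)^{-1}\tau\bigr)^{-1}\rho_2\, s\bigl(\varepsilon(\rho_2)^{-1}\varepsilon(\rho_1)^{-1}\tau\bigr),$$
in which the two inner copies of $s(\varepsilon(\rho_1)^{-1}\tau)$ cancel, leaving $s(\tau)^{-1}\rho_1\rho_2\, s(\varepsilon(\rho_1\rho_2)^{-1}\tau) = \sigma_{\rho_1\rho_2}(\tau)$, where I also use that $\varepsilon$ is itself a group homomorphism for the second coordinate. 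Note that $s$ itself need not be a homomorphism; the cancellation is what makes the calculation work irrespective of the choice of set-theoretic section.

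For (c), assume $\varphi(\rho) = (\op{id}, \op{id})$. The second-coordinate condition forces $\varepsilon(\rho) = \op{id}_{\Gal(K^c/F)}$, and the defining formula then collapses to the conjugate $\sigma_\rho(\tau) = s(\tau)^{-1} \circ \rho \circ s(\tau)$. Setting this equal to $\op{id}_{L^c}$ at a single value of $\tau$ immediately gives $\rho = \op{id}_{L^c}$, so $\ker \varphi$ is trivial.

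I do not expect any serious obstacle here — the argument is essentially formal once the definitions are laid out. The only subtlety, really a matter of bookkeeping, is to match the sign conventions for the regular wreath product action (specifically that $\Gal(K^c/F)$ acts on the function coordinate by $h\cdot f = f \circ \lambda_{h^{-1}}$) with the shift $\varepsilon(\rho)^{-1} \circ \tau$ appearing inside the definition of $\sigma_\rho(\tau)$; this is precisely the convention that makes the telescoping in step (b) cancel cleanly, and it is the reason the right inverse $s$ must be evaluated at $\varepsilon(\rho)^{-1}\circ\tau$ rather than at $\tau$ itself.
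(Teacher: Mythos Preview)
Your proposal is correct and follows essentially the same approach as the paper: the paper first states and proves the abstract Kaloujnine--Krasner embedding theorem for an arbitrary short exact sequence $1 \to N \to G \to Q \to 1$ and then specializes to the Galois-theoretic sequence, but the three verifications (well-definedness via $\varepsilon(\sigma_\rho(\tau)) = e$, the telescoping homomorphism computation, and injectivity by evaluating at a single point) are exactly the ones you outline. The only cosmetic difference is that the paper proves injectivity directly by assuming $\varphi(g_1)=\varphi(g_2)$ and evaluating at $e_Q$, whereas you compute the kernel after establishing the homomorphism property; both are equivalent.
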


\begin{remark}
As will be explained in Section \ref{Kaloujnine-Krasner-section}, this is an application of a very important embedding theorem of Kaloujnine and Krasner \cite{KK51b} from 1951.
\end{remark}

\begin{remark}
	Observe that the embedding (\ref{Galois-embedding-1}) is essentially breaking up an automorphism $\rho \in \Gal(L^c/F)$ into two pieces of information with respect to the tower of fields $F \subseteq K^c \subseteq L^c$, as Figure \ref{Embedding-diagram} shows. More precisely, the restriction $\varepsilon(\rho)$ of $\rho$ to $K^c$ gives an automorphism in the Galois group $\Gal(K^c/F)$, and thus $\varepsilon(\rho)$ retains all the information about $\rho$ relative to the bottom extension $K^c/F$ in Figure \ref{Embedding-diagram}. On the other hand, the function $\sigma_{\rho} \colon \Gal(K^c/F) \longrightarrow \Gal(L^c/K^c)$ retains all the information about $\rho$ relative to the top extension $L^c/K^c$.
	
	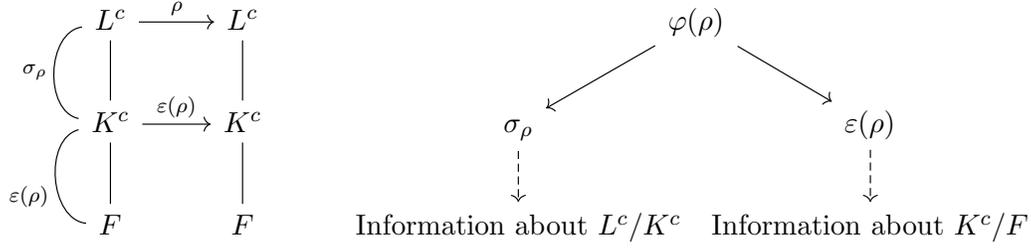
\begin{figure}[H]
		\begin{tikzcd}[/tikz/column 3/.style={column sep=-1.3em}, /tikz/column 4/.style={column sep=-1.3em}]
			L^c \arrow[r, "\rho"] \arrow[d, dash] \arrow[d, bend right = 80, dash, swap, "\sigma_{\rho}"] & L^c \arrow[d, dash] &  & \varphi(\rho) \arrow[dl] \arrow[dr] &  \\
			K^c \arrow[r, "\varepsilon(\rho)"] \arrow[d, dash] \arrow[d, bend right = 80, dash, swap, "\varepsilon(\rho)"] & K^c \arrow[d, dash] & \sigma_{\rho}  \arrow[d, dashed] &  & \varepsilon(\rho) \arrow[d, dashed]    \\
			F  & F & \text{Information about $L^c/K^c$} & & \text{Information about $K^c/F$}
		\end{tikzcd}
		\caption{How the embedding (\ref{Galois-embedding-1}) breaks up an automorphism.}
		\label{Embedding-diagram}
	\end{figure}
	
\end{remark}

A different embedding is given in the following theorem. In particular, this result gives an embedding into a wreath product which in general will have much smaller size than the regular wreath product appearing in Theorem \ref{Main-Theorem}, as will be clarified in Remarks \ref{Same-group-remark} and \ref{Size-Remark} below.

\begin{theorem}\label{Main-Theorem-2} 
    Consider a tower of fields $F \subseteq K \subseteq L$ with $L/F$ finite and separable. Let $L^c$ be the Galois closure of $L/F$ and $K^c$ denote the Galois closure of $K/F$. Let $\Omega:= \op{Hom}_F(K, K^c)$, and let $s: \Omega \longrightarrow \Gal(L^c/F)$ be a map that extends every $F$-embedding $\omega \in \Omega$ to an automorphism $s(\omega) \in \Gal(L^c/F)$. Define the map
    \begin{align*}
		\varphi \colon &\Gal(L^c/F) \hooklongrightarrow \Gal(L^c/K) \wr_{\Omega} \Gal(K^c/F) \\
		&\quad \quad \qquad  \rho \longmapsto (\sigma_{\rho}, \varepsilon(\rho)), \notag
	\end{align*}
    where $\sigma_{\rho}: \Omega = \op{Hom}_F(K, K^c) \longrightarrow \Gal(L^c/K)$ is given by 
    \begin{align*}
    \sigma_\rho(\omega) = s(\omega)^{-1} \circ \rho \circ s(\varepsilon(\rho)^{-1}\circ \omega)
    \end{align*}
    and $\varepsilon$ is the the restriction from  $\Gal(L^c/F)$ to $\Gal(K^c/F)$. Then, the map $\varphi$ is an injective group homomorphism, embedding the group $\Gal(L^c/F)$ into the wreath product $\Gal(L^c/K) \wr_{\Omega} \Gal(K^c/F)$.
\end{theorem}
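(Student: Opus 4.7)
The plan is to follow the same three-step pattern as the proof of Theorem \ref{Main-Theorem}: verify that $\sigma_\rho$ takes values in $\Gal(L^c/K)$, check the wreath-product homomorphism identity, and argue injectivity. The conceptual adjustment is that the first coordinate is now a function on the $\Gal(K^c/F)$-set $\Omega = \op{Hom}_F(K,K^c)$ (under the action $\tau \cdot \omega = \tau \circ \omega$) instead of on $\Gal(K^c/F)$, and the section $s$ is purely set-theoretic; its existence follows from the fact that every $F$-embedding $K \hookrightarrow K^c \subseteq L^c$ extends to an element of $\Gal(L^c/F)$ because $L^c/F$ is Galois.

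First, I would show that $\sigma_\rho(\omega)$ fixes $K$ pointwise for every $\omega \in \Omega$. For $x \in K$, unfolding the definition gives
\[
\sigma_\rho(\omega)(x) = s(\omega)^{-1}\!\left(\rho\!\left(s(\varepsilon(\rho)^{-1}\circ\omega)(x)\right)\right).
\]
The defining property of $s$ is that $s(\omega')|_K = \omega'$, so the innermost term equals $\varepsilon(\rho)^{-1}(\omega(x)) \in K^c$. Since $\rho|_{K^c} = \varepsilon(\rho)$, applying $\rho$ returns $\omega(x)$, and then $s(\omega)^{-1}$ sends $\omega(x) = s(\omega)(x)$ back to $x$. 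Hence $\sigma_\rho(\omega) \in \Gal(L^c/K)$.

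Next, I would verify the homomorphism property. With the wreath-product multiplication convention $(f_1,g_1)(f_2,g_2) = \bigl(\omega \mapsto f_1(\omega)\circ f_2(g_1^{-1}\cdot\omega),\ g_1 g_2\bigr)$ induced by the $\Gal(K^c/F)$-action on $\Omega$, the required identity on first coordinates is
\[
\sigma_{\rho_1}(\omega) \circ \sigma_{\rho_2}\!\left(\varepsilon(\rho_1)^{-1}\circ\omega\right) = \sigma_{\rho_1\rho_2}(\omega),
\]
together with $\varepsilon(\rho_1)\varepsilon(\rho_2) = \varepsilon(\rho_1\rho_2)$ on second coordinates. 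The latter is automatic since $\varepsilon$ is a group homomorphism; the former is a telescoping cancellation in which $s(\varepsilon(\rho_1)^{-1}\circ\omega)$ meets its inverse, collapsing the composition to $s(\omega)^{-1}\circ\rho_1\rho_2\circ s(\varepsilon(\rho_1\rho_2)^{-1}\circ\omega)$.

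For injectivity, suppose $\varphi(\rho) = (1,1)$. Then $\varepsilon(\rho)=1$, so $\rho \in \Gal(L^c/K^c)$, and for any single $\omega \in \Omega$ the equation $\sigma_\rho(\omega) = \op{id}_{L^c}$ reduces (using $\varepsilon(\rho)=1$) to $s(\omega)^{-1}\circ\rho\circ s(\omega) = \op{id}_{L^c}$, forcing $\rho = \op{id}_{L^c}$. The main thing requiring care throughout is bookkeeping of conventions: the direction of the $\Gal(K^c/F)$-action on $\Omega$ and the placement of the inverse in $\varepsilon(\rho)^{-1}\circ\omega$ must be matched to the wreath-product multiplication fixed in Section \ref{Wreath-section}. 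Once that alignment is made, the argument is formally parallel to the proof of Theorem \ref{Main-Theorem} with $\Gal(K^c/F)$ replaced by the $\Gal(K^c/F)$-set $\Omega$.
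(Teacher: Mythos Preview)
Your proposal is correct and follows essentially the same approach as the paper's proof: the well-definedness check that $\sigma_\rho(\omega)$ fixes $K$, the telescoping verification of the homomorphism identity, and the injectivity argument are all carried out in the same way. The only cosmetic difference is that the paper proves injectivity by assuming $\varphi(\rho_1)=\varphi(\rho_2)$ and cancelling, whereas you use the equivalent kernel formulation; both reduce to the same cancellation $s(\omega)^{-1}\circ\rho\circ s(\omega)=\op{id}_{L^c}$.
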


\begin{remark}
We note that Theorem \ref{Main-Theorem-2} gives an explicit version of a result that was sketched by B. de Smit in an unpublished note \cite{BSmit07}, which also appears in the master's thesis of his student M. Pintonello \cite[Theorem 1.1.2]{Pinto18}. In both places, the result is obtained in the language of permutation groups and no explicit embedding is given. Here, we have avoided the more cumbersome language of permutation groups and moreover we give a direct and simpler proof of the result from the explicit form of the embedding that we obtained.
\end{remark}

\begin{remark}\label{Same-group-remark}
Observe that when $K/F$ is a Galois extension we have $K^c = K$. In particular, this implies that $\operatorname{Hom}_F(K, K^c) = \Gal(K^c/F)$ and $\Gal(L^c/K) = \Gal(L^c/K^c)$. Thus, this means that in this situation the two wreath products appearing in Theorems \ref{Main-Theorem} and \ref{Main-Theorem-2} are actually the same group and furthermore the embeddings coincide.
\end{remark}

\begin{remark}\label{Size-Remark}
Let $F \subseteq K \subseteq L$ be a tower of fields with $L/F$ finite and separable, as in the statement of Theorems \ref{Main-Theorem} and \ref{Main-Theorem-2}. Both  of those two theorems offer embeddings of $\Gal(L^c/F)$ into wreath products of associated Galois groups. Nevertheless, there is an important difference in the corresponding sizes of the wreath products involved. To explain this, referring ahead to Remark \ref{Wreath-size}, we know that the size of a wreath product $K \wr_{\Omega} H$ is given by $|K \wr_{\Omega} H| = |K|^{|\Omega|} \cdot |H|$. Hence, for the wreath product in Theorem \ref{Main-Theorem} we have
\begin{align}\label{Size-formula-regular-wreath}
|\Gal(L^c/K^c) \wr_r \Gal(K^c/F)| &= |\Gal(L^c/K^c)|^{|\Gal(K^c/F)|} |\Gal(K^c/F)| \\
&= \left( \frac{[L^c:F]}{[K^c:F]} \right)^{[K^c:F]} [K^c:F] \notag
\end{align}
and similarly for the wreath product appearing in Theorem \ref{Main-Theorem-2} we have
\begin{align}\label{Size-formula-omega-wreath}
|\Gal(L^c/K) \wr_{\Omega} \Gal(K^c/F)| &= |\Gal(L^c/K)|^{|\operatorname{Hom}_F(K, K^c)|} |\Gal(K^c/F)| \\
&= \left( \frac{[L^c:F]}{[K:F]} \right)^{[K:F]} [K^c:F]. \notag
\end{align}

As these two formulas show, the two wreath products have the same size (and in fact are actually the same group as was observed in Remark \ref{Same-group-remark}) when $K^c = K$, i.e., when $K/F$ is a Galois extension. Moreover, observe that by the tower law for field extensions we always have that $[K^c : F]$ divides $[L^c : F]$. 

Now, in Table \ref{Wreath-size-table} in Subsection \ref{tables-and-graphs-intro} we use the identities (\ref{Size-formula-regular-wreath}) and (\ref{Size-formula-omega-wreath}) to list formulas for the corresponding sizes for the different possibilities for $\Gal(K^c/F)$ when $2 \leq [K:F] \leq 5$. The formulas are given as functions of $m := |\Gal(L^c/F)| = [L^c:F]$. Moreover, since $[K^c:F]$ divides $[L^c:F]$, each fraction appearing in the table is actually an integer. In particular, the reader should note that the exponent of $m$ is in general larger for the regular wreath product $|\Gal(L^c/K^c) \wr_r \Gal(K^c/F)|$ than for the wreath product $\Gal(L^c/K) \wr_{\Omega} \Gal(K^c/F)$. This implies that, as functions of $m$, as $m$ gets bigger, the size of $\Gal(L^c/K^c) \wr_r \Gal(K^c/F)$ will be a lot larger than the size of $\Gal(L^c/K) \wr_{\Omega} \Gal(K^c/F)$. Therefore, in this respect, Theorem \ref{Main-Theorem-2} can be seen as giving a sharper embedding than Theorem \ref{Main-Theorem}, since one will in general be embedding the Galois group $\Gal(L^c/F)$ into a wreath product of much smaller size in Theorem \ref{Main-Theorem-2}.

Moreover, experimentally we have observed that when $K/F$ is non-Galois and when $m = [L^c : F] \geq 3[K^c : F]$, the regular wreath product $\Gal(L^c/K^c) \wr_r \Gal(K^c/F)$ is actually strictly bigger than the wreath product $\Gal(L^c/K) \wr_{\Omega} \Gal(K^c/F)$, and when $m = 2[K^c : F]$ they have the same size if and only if $\Gal(K^c/F) \simeq D_n$ is dihedral. This can be seen in Figures \ref{plot_K_F_3}, \ref{plot_K_F_4} and \ref{plot_K_F_5} at the end of Subsection \ref{tables-and-graphs-intro}.

\end{remark}

\subsection{Sharper embedding theorems for Kummer, cyclic and quadratic extensions}\label{Sharper-embeddings-subsection}

It turns out that one can get a sharper embedding when in the tower of fields $F \subseteq K \subseteq L$ we have $L/K$ quadratic. For example, C-L. Chai and F. Oort proved the following (see \cite[Lemma 2.8]{CO12}).

\begin{theorem}\label{Chai-Oort-Theorem}
Let $K$ be a number field of degree $n = [K : \Q]$ and let $L$ be a quadratic extension of $K$. Denote by $L^c$ the Galois closure of $L/\Q$. Then the Galois group $\Gal(L^c/\Q)$ is isomorphic to a subgroup of the wreath product $C_2 \wr_{\Omega} S_n$, where $\Omega = \{ 1, 2, \dots, n \}$ and the action of $S_n$ on $C_2^{\Omega} = C_2^n$ is by permuting the components as in Remark \ref{tuple-remark}. 
\end{theorem}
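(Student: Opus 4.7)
The plan is to obtain Theorem \ref{Chai-Oort-Theorem} as a direct consequence of the sharper Kummer embedding (Theorem \ref{Kummer-embedding}, specialized via Corollary \ref{Quadratic-embedding}) applied to the tower $\Q \subseteq K \subseteq L$, combined with the standard faithful action of $\Gal(K^c/\Q)$ on the set of $\Q$-embeddings of $K$.

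First, because $[L:K] = 2$ and $\op{char}(\Q) = 0 \neq 2$, the extension $L/K$ is a Kummer extension, so $L = K(\sqrt{\alpha})$ for some $\alpha \in K^{\times}$ and $\Gal(L/K) \simeq C_2$. Applying Theorem \ref{Kummer-embedding} (or directly Corollary \ref{Quadratic-embedding}) with $F = \Q$ yields an injective homomorphism
$$
\Gal(L^c/\Q) \hooklongrightarrow \Gal(L/K) \wr_{\Omega'} \Gal(K^c/\Q) \simeq C_2 \wr_{\Omega'} \Gal(K^c/\Q),
$$
where $\Omega' := \op{Hom}_{\Q}(K, K^c)$ is acted upon by $\Gal(K^c/\Q)$ via post-composition. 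Since $K/\Q$ is separable of degree $n$, we have $|\Omega'| = n$.

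Second, the action of $\Gal(K^c/\Q)$ on $\Omega'$ yields a group homomorphism $\Gal(K^c/\Q) \longrightarrow \Sym{(\Omega')}$, which is injective because $K^c$ is generated over $\Q$ by the images of the embeddings in $\Omega'$. Fixing a bijection $\Omega' \longleftrightarrow \Omega = \{1,2,\dots,n\}$ transports this into an injection $\Gal(K^c/\Q) \hooklongrightarrow S_n$ whose action on $\Omega$ is, by construction, the restriction of the permutation action of $S_n$.

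Third, given that the two actions on $\Omega$ are compatible, the inclusion $\Gal(K^c/\Q) \hooklongrightarrow S_n$ induces a well-defined injective group homomorphism of wreath products
$$
C_2 \wr_{\Omega} \Gal(K^c/\Q) \hooklongrightarrow C_2 \wr_{\Omega} S_n, \qquad (f,h) \longmapsto (f,h),
$$
that keeps the base map $f \colon \Omega \to C_2$ unchanged and uses the inclusion on the top component; this is a homomorphism precisely because the two permutation actions of $h$ on $\Omega$ coincide. Composing the embedding from the first step with this inclusion delivers the desired embedding $\Gal(L^c/\Q) \hooklongrightarrow C_2 \wr_{\Omega} S_n$.

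The only genuinely nontrivial input is the Kummer refinement in the first step, so that is where the substance lies: one must know that $\sigma_\rho(\omega)$ as defined in Theorem \ref{Main-Theorem-2} can be taken to land in the smaller group $\Gal(L/K) \simeq C_2$ rather than in the a priori larger $\Gal(L^c/K)$. Granted this, all remaining steps are formal and the main obstacle will simply be verifying carefully that passing to a larger acting group on the same index set $\Omega$ preserves the wreath product multiplication, i.e.\ that $(f,h)(f',h') = (f \cdot (h \cdot f'),\, hh')$ is computed identically inside $C_2 \wr_{\Omega} \Gal(K^c/\Q)$ and inside $C_2 \wr_{\Omega} S_n$.
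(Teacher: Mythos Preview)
Your proposal is correct and follows essentially the same route as the paper: apply Corollary \ref{Quadratic-embedding} to the tower $\Q \subseteq K \subseteq L$, then transport along a bijection $\op{Hom}_\Q(K,K^c) \to \{1,\dots,n\}$ (the paper packages this as Proposition \ref{Omega-wreath-isomorphism}) and finally enlarge the top group from $\Gal(K^c/\Q)$ to $S_n$ (the paper's Corollary \ref{Subgroup-wreath-isomorphism}). The only cosmetic difference is that you verify the compatibility of the wreath product multiplications by hand, whereas the paper invokes its two technical lemmas; the content is identical.
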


Moreover, under the same hypotheses as in the previous theorem, one can get an even sharper embedding. This was done by the first author, R. Masri and F. Thorne in \cite[Proposition 3.1]{BSMT17}.

\begin{theorem}\label{Barquero-Sanchez-Masri-Thorne-Theorem}
Let $K$ be a number field of degree $n = [K : \Q]$ and let $L$ be a quadratic extension of $K$. Denote by $L^c$ the Galois closure of $L/\Q$ and let $G$ be a transitive subgroup of $S_n$ with $\Gal(K^c/\Q) \simeq G$. Then $\Gal(L^c/\Q)$ is isomorphic to a subgroup of the wreath product $C_2 \wr_{\Omega} G$, where $\Omega = \{ 1, 2, \dots, n \}$ and the action of $G$ on $C_2^{\Omega} = C_2^n$ is by permuting the components as in Remark \ref{tuple-remark}.   
\end{theorem}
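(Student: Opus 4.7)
The plan is to derive this theorem as a direct consequence of the quadratic-extension case of the Kummer embedding theorem (Corollary \ref{Quadratic-embedding}, which will be established later in the paper as a special case of Theorem \ref{Kummer-embedding}), followed by a relabeling of the index set $\Omega = \op{Hom}_\Q(K, K^c)$ as $\{1, 2, \dots, n\}$.

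First I would apply Corollary \ref{Quadratic-embedding} to the tower $\Q \subseteq K \subseteq L$. Since $L/K$ is quadratic and $\op{char}(\Q) = 0 \neq 2$, this produces an injective group homomorphism
\[
\Gal(L^c/\Q) \hooklongrightarrow C_2 \wr_{\Omega} \Gal(K^c/\Q),
\]
where $\Omega = \op{Hom}_\Q(K, K^c)$ and $\Gal(K^c/\Q)$ acts on $\Omega$ by left composition, $\tau \cdot \omega = \tau \circ \omega$.

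Next, since $|\Omega| = [K:\Q] = n$, I would fix an enumeration $\Omega = \{\omega_1, \dots, \omega_n\}$ and use it to identify $\Omega$ with $\{1, 2, \dots, n\}$. Under this identification one obtains a transitive and faithful action of $\Gal(K^c/\Q)$ on $\{1, 2, \dots, n\}$: transitivity is the standard fact that $\Gal(K^c/\Q)$ permutes the $\Q$-embeddings of $K$ transitively, while faithfulness follows from the identity
\[
\bigcap_{\omega \in \Omega} \Gal(K^c/\omega(K)) \;=\; \Gal(K^c/K^c) \;=\; \{1\},
\]
using that $K^c$ is the compositum of the images $\omega(K)$ for $\omega \in \Omega$. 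Hence the enumeration realizes $\Gal(K^c/\Q)$ as a transitive subgroup of $S_n$, which is precisely the isomorphism $\Gal(K^c/\Q) \simeq G$ from the statement. The relabeling then yields an isomorphism of wreath products
\[
C_2 \wr_{\Omega} \Gal(K^c/\Q) \;\simeq\; C_2 \wr_{\{1, \dots, n\}} G,
\]
where $G$ acts on $C_2^n$ by permuting components as in Remark \ref{tuple-remark}. Composing with the embedding of the first step gives the desired embedding $\Gal(L^c/\Q) \hookrightarrow C_2 \wr_{\Omega} G$.

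The only genuine work in this argument lies in Corollary \ref{Quadratic-embedding} itself, whose content is that the ``top'' Galois group appearing in Theorem \ref{Main-Theorem-2} can be shrunk from $\Gal(L^c/K)$ all the way down to $\Gal(L/K) \simeq C_2$ when $L/K$ is Kummer. Once that sharper embedding is available, the passage to Theorem \ref{Barquero-Sanchez-Masri-Thorne-Theorem} is purely a matter of bookkeeping: its improvement over Chai--Oort's Theorem \ref{Chai-Oort-Theorem} is precisely the observation that the ambient symmetric group $S_n$ may be refined to the specific transitive subgroup $G \simeq \Gal(K^c/\Q)$ realized by the action on $\Omega$, which is automatic from the faithful transitive action discussed above.
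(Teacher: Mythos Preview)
Your proposal is correct and follows essentially the same route as the paper: apply Corollary \ref{Quadratic-embedding} to the tower $\Q \subseteq K \subseteq L$, then transport the wreath product along an equivariant bijection between $\op{Hom}_\Q(K,K^c)$ and $\{1,\dots,n\}$ (the paper does this via a primitive element and Proposition \ref{Omega-wreath-isomorphism}, whereas you phrase it as a relabeling and check faithfulness via the compositum, but the content is identical).
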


We will now give a very broad generalization of Theorems \ref{Chai-Oort-Theorem} and \ref{Barquero-Sanchez-Masri-Thorne-Theorem}. More precisely, we will prove an embedding theorem for towers of fields $F \subseteq K \subseteq L$ with $L/K$ a Kummer extension. We describe this now.

First, recall that if $K$ is a field, an element $\omega \in K$ with $\omega^n = 1$ is called an \textit{$n$-th root of unity}. Moreover, if the order of $\omega$ in the multiplicative group $K^{\times}$ is $n$, then we say that $\omega$ is a \textit{primitive $n$-th root of unity}. In particular, if $\omega \in K$ is a primitive $n$-th root of unity, then $\op{char}(K)$ does not divide $n$ (see e.g. \cite[\S 7]{Mor96}). An extension $L/K$ is called an \textit{$n$-Kummer extension} if $L/K$ is Galois and $\Gal(L/K)$ is abelian with exponent dividing $n$. We use the following theorem characterizing $n$-Kummer extensions with roots of unity. It is adapted from \cite[Theorem 11.4]{Mor96} and its proof.

\begin{theorem}[Characterization of Kummer extensions]\label{Kummer-extensions}
    Let $K$ be a field containing a primitive $n$-th root of unity and let $L$ be a finite extension of $K$. Then $L/K$ is an $n$-Kummer extension if and only if $L=K(\sqrt[\leftroot{-3}\uproot{3} n_1]{\alpha_1},\dots,\sqrt[\leftroot{-3}\uproot{3} n_r]{\alpha_r})$ for some $\alpha_1,\dots,\alpha_r \in K$ with $n_1,\dots, n_r$ positive integers dividing $n$. Moreover, if $L/K$ is an $n$-Kummer extension, then by the Fundamental Theorem of Finite Abelian Groups, the Galois group $\Gal(L/K) = \langle \eta_1 \rangle \cdots \langle \eta_r \rangle$ is a product of cyclic groups and if the order of $\eta_i$ is $n_i$, for each $i = 1, \dots, r$, then the fixed field $L_i$ of the subgroup $H_i = \langle \eta_1 \rangle \cdots \langle \eta_{i-1} \rangle \langle \eta_{i+1} \rangle \cdots \langle \eta_r \rangle$ is a cyclic extension of $K$ with $\Gal(L_i/K) = \langle \eta_i \rangle$ and there exists an element $\alpha_i \in K$ such that $L_i = F(\sqrt[\leftroot{-3}\uproot{3} n_i]{\alpha_i})$. Then $L=K(\sqrt[\leftroot{-3}\uproot{3} n_1]{\alpha_1},\dots,\sqrt[\leftroot{-3}\uproot{3} n_r]{\alpha_r})$.
\end{theorem}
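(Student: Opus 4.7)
The plan is to prove both directions of the biconditional, with the ``moreover'' clause emerging as a byproduct of the construction used in the forward direction.

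For the direction $(\Leftarrow)$, I would assume $L=K(\sqrt[n_1]{\alpha_1},\dots,\sqrt[n_r]{\alpha_r})$ with each $n_i$ dividing $n$. Since $K$ contains a primitive $n$-th root of unity $\zeta$, it also contains the primitive $n_i$-th root $\zeta^{n/n_i}$, so each polynomial $x^{n_i}-\alpha_i$ splits in $K(\sqrt[n_i]{\alpha_i})$, making this a Galois extension whose group embeds into the $n_i$-th roots of unity via $\sigma \mapsto \sigma(\sqrt[n_i]{\alpha_i})/\sqrt[n_i]{\alpha_i}$, hence cyclic of order dividing $n_i$. Then $L$, as the compositum of Galois extensions, is itself Galois over $K$, and $\Gal(L/K)$ embeds into the direct product of the factor Galois groups, yielding an abelian group of exponent dividing $\op{lcm}(n_1,\dots,n_r)$, which in turn divides $n$.

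For the harder direction $(\Rightarrow)$, I would begin with an $n$-Kummer extension $L/K$ and apply the Fundamental Theorem of Finite Abelian Groups to decompose $\Gal(L/K) = \langle \eta_1 \rangle \cdots \langle \eta_r \rangle$ with each $|\eta_i|=n_i$ dividing $n$ (since $n_i$ divides the exponent of $\Gal(L/K)$, which divides $n$). Defining $H_i$ as in the statement, the Galois correspondence identifies the fixed field $L_i := L^{H_i}$ as a cyclic Kummer extension of $K$ with $\Gal(L_i/K) \cong \Gal(L/K)/H_i \cong \langle \eta_i \rangle$ of order $n_i$. The central technical step is to realize each cyclic $L_i/K$ as a simple radical extension: letting $\omega \in K$ be a primitive $n_i$-th root of unity, I have $\op{N}_{L_i/K}(\omega) = \omega^{n_i} = 1$, so Hilbert's Theorem 90 produces $\beta \in L_i^{\times}$ with $\eta_i(\beta)=\omega \beta$. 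Then the conjugates $\eta_i^k(\beta) = \omega^k \beta$ for $0 \le k < n_i$ are all distinct, forcing $K(\beta) = L_i$, while $\alpha_i := \beta^{n_i}$ satisfies $\eta_i(\alpha_i) = \omega^{n_i} \alpha_i = \alpha_i$ and therefore lies in $K$, giving $L_i = K(\sqrt[n_i]{\alpha_i})$.

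To finish, I would observe that $\bigcap_{i=1}^r H_i = \{1\}$ by construction, since any element lying in every $H_i$ has trivial component in each cyclic factor $\langle \eta_j \rangle$. Under the Galois correspondence this intersection corresponds to the compositum $L_1 L_2 \cdots L_r$, which must therefore equal $L$. Substituting the radical descriptions obtained above yields $L = K(\sqrt[n_1]{\alpha_1},\dots,\sqrt[n_r]{\alpha_r})$ and simultaneously verifies the ``moreover'' statement. The main obstacle will be executing the Hilbert 90 step cleanly and verifying all the orbit and fixing claims about $\beta$; everything else amounts to careful bookkeeping with the Fundamental Theorem of Finite Abelian Groups and the Galois correspondence. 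A minor point to confirm throughout is that a primitive $n$-th root of unity in $K$ supplies the lower-order roots of unity needed at each step, but this is immediate from $n_i \mid n$.
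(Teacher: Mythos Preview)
The paper does not actually prove this theorem; it is stated as a background result ``adapted from \cite[Theorem 11.4]{Mor96} and its proof,'' with no argument given in the paper itself. Your proposal is the standard textbook proof of Kummer theory---the $(\Leftarrow)$ direction via compositum of cyclic radical extensions, and the $(\Rightarrow)$ direction via the structure theorem for finite abelian groups together with Hilbert's Theorem~90 applied to each cyclic piece---and it is correct as outlined; this is essentially the approach one finds in Morandi and other standard references.
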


Then, we have the following embedding for towers $F \subseteq K \subseteq L$ where $L/K$ is an $n$-Kummer extension.

\begin{theorem}\label{Kummer-embedding}
Let $F \subseteq K \subseteq L$ be a tower of fields with $L/F$ finite and separable. Moreover, suppose that $K$ contains a primitive $n$-th root of unity $\omega$ and that $L/K$ is an $n$-Kummer extension. As in Theorem \ref{Kummer-extensions}, write $L = K(\sqrt[\leftroot{-3}\uproot{3} n_1]{\alpha_1}, \dots, \sqrt[\leftroot{-3}\uproot{3} n_r]{\alpha_r})$ for some $\alpha_i \in K$ and some positive integers $n_i$ dividing $n$, and put $\Gal(L/K) = \langle \eta_1 \rangle \cdots \langle \eta_r \rangle$, where the order of $\eta_i$ equals $n_i$ for each $i\in \{1,2,\dots, r\}$. Denote the Galois closures of $K/F$ and $L/F$ by $K^c$ and $L^c$, respectively and let $\Omega = \op{Hom}_{F}(K, K^c)$ be the set of $F$-homomorphisms $K \hookrightarrow K^c$. The group $\Gal(K^c/F)$ acts on $\Omega$ on the left by composition of maps. Then the Galois group $\Gal(L^c/F)$ embeds into the wreath product $\Gal(L/K) \wr_{\Omega} \Gal(K^c/F)$. More precisely, consider the map 
	\begin{align}\label{Galois-embedding-kummer}
		\varphi \colon &\Gal(L^c/F) \longrightarrow \Gal(L/K) \wr_{\Omega} \Gal(K^c/F) \\
		&\quad \quad \qquad  \rho \longmapsto (\sigma_{\rho}, \varepsilon(\rho)) \notag
	\end{align}
	where $\sigma_{\rho} \colon \op{Hom}_F(K, K^c) \longrightarrow \Gal(L/K)$ is defined for every embedding $\tau \in \op{Hom}_F(K, K^c)$ by
	$$
	   \sigma_{\rho}(\tau) = \eta_1^{\chi(\rho, \tau,\alpha_1)}\circ \cdots \circ \eta_r^{\chi(\rho, \tau,\alpha_r)},
	$$
with $\chi(\rho, \tau, \alpha_j) \in \Z/n_j\Z$ chosen to satisfy 
    $$
      \tau(\omega_j)^{\chi(\rho, \tau, \alpha_j)} = \frac{\rho\left(\sqrt[\leftroot{-3}\uproot{3} n_j]{\rho^{-1}(\tau(\alpha_j))}\right)}{\sqrt[\leftroot{-3}\uproot{3} n_j]{\tau(\alpha_j)}},
    $$
where $\omega_j = \omega^{n/n_j}$ for $j \in \{1,2,\cdots,r\}$. Then the map $\varphi$ is an injective group homomorphism.
\end{theorem}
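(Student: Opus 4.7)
The plan is to proceed in three steps: (1) establish well-definedness of $\varphi$, (2) verify it is a group homomorphism, and (3) check injectivity. The key advantage of the Kummer hypothesis is that the action of $\Gal(L^c/F)$ on the radicals $\sqrt[n_j]{\alpha_j}$ is controlled by roots of unity lying in $K^c$, which removes the need for a section of $\varepsilon$ that was required in Theorems \ref{Main-Theorem} and \ref{Main-Theorem-2}.

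For well-definedness, I would fix, once and for all, a choice of $n_j$-th root $\sqrt[n_j]{\tau(\alpha_j)}$ in $L^c$ for each $\tau \in \Omega$ and each $j \in \{1,\dots,r\}$. Raising the ratio $\rho(\sqrt[n_j]{\rho^{-1}(\tau(\alpha_j))})/\sqrt[n_j]{\tau(\alpha_j)}$ to the $n_j$-th power gives $\tau(\alpha_j)/\tau(\alpha_j)=1$, so the ratio is an $n_j$-th root of unity in $L^c$. Because $K^c \supseteq K \ni \omega_j$ contains all $n_j$ of the $n_j$-th roots of unity, and $\tau(\omega_j)$ is again a primitive $n_j$-th root of unity, the exponent $\chi(\rho,\tau,\alpha_j)\in \Z/n_j\Z$ is uniquely determined. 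Since $\eta_j$ has order $n_j$ and $\Gal(L/K)$ is abelian (it is Kummer), the product $\sigma_\rho(\tau)=\eta_1^{\chi(\rho,\tau,\alpha_1)}\circ\cdots\circ\eta_r^{\chi(\rho,\tau,\alpha_r)}$ is an unambiguous element of $\Gal(L/K)$.

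For the homomorphism property, I would recall that multiplication in $\Gal(L/K)\wr_{\Omega}\Gal(K^c/F)$ takes the form $(\sigma,g)(\sigma',g')=(\sigma\cdot(g\cdot\sigma'),\,gg')$ with $(g\cdot\sigma')(\tau)=\sigma'(g^{-1}\circ\tau)$. Using that $\varepsilon$ is a homomorphism and that $\Gal(L/K)$ is abelian, the verification reduces to the additivity
\[
\chi(\rho_1\rho_2,\tau,\alpha_j) \equiv \chi(\rho_1,\tau,\alpha_j) + \chi\bigl(\rho_2,\,\varepsilon(\rho_1)^{-1}\circ\tau,\,\alpha_j\bigr) \pmod{n_j}
\]
for each $j$. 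Setting $\tau'=\varepsilon(\rho_1)^{-1}\circ\tau$, so that $\tau'(\alpha_j)=\rho_1^{-1}(\tau(\alpha_j))$, I would apply $\rho_1$ to the defining equation for $\chi(\rho_2,\tau',\alpha_j)$ and invoke the identity $\rho_1(\tau'(\omega_j))=\tau(\omega_j)$, which holds because $\omega_j\in K$ and $\varepsilon(\rho_1)=\rho_1|_{K^c}$. Multiplying by the defining equation for $\chi(\rho_1,\tau,\alpha_j)$ produces a telescoping identity whose left-hand side matches the expansion of $(\rho_1\rho_2)(\sqrt[n_j]{(\rho_1\rho_2)^{-1}(\tau(\alpha_j))})/\sqrt[n_j]{\tau(\alpha_j)}$, giving the desired sum of exponents. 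This careful bookkeeping of nested conjugates of radicals is what I expect to be the main obstacle.

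For injectivity, suppose $\varphi(\rho)=(1,1)$. Then $\varepsilon(\rho)=\op{id}$ forces $\rho$ to fix $K^c$ pointwise, so in particular $\rho^{-1}(\tau(\alpha_j))=\tau(\alpha_j)$ for every $\tau\in\Omega$ and every $j$. The vanishing $\sigma_\rho=1$ then collapses the defining equation to $\rho(\sqrt[n_j]{\tau(\alpha_j)})=\sqrt[n_j]{\tau(\alpha_j)}$. Every $F$-embedding $L\hookrightarrow L^c$ restricts to some $\tau\in\Omega$ and sends $\sqrt[n_j]{\alpha_j}$ to an $n_j$-th root of $\tau(\alpha_j)$, and any two such roots differ by a power of $\omega_j\in K^c$; hence the set $\{\sqrt[n_j]{\tau(\alpha_j)}:\tau\in\Omega,\,1\le j\le r\}$ together with $K^c$ generates $L^c$ over $F$. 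Since $\rho$ fixes a generating set, $\rho=\op{id}_{L^c}$, completing the proof.
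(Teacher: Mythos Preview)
Your proposal is correct and follows essentially the same approach as the paper: establish well-definedness via the fact that $\tau(\omega_j)$ is a primitive $n_j$-th root of unity, verify the homomorphism property by proving the cocycle identity $\chi(\rho_1\rho_2,\tau,\alpha_j)=\chi(\rho_1,\tau,\alpha_j)+\chi(\rho_2,\varepsilon(\rho_1)^{-1}\circ\tau,\alpha_j)$, and prove injectivity by showing that $\rho$ fixes both $K^c$ and the generating set $\{\sqrt[n_j]{\tau(\alpha_j)}\}$ for $L^c$ over $K^c$. The paper isolates the generation statement as a separate lemma (Lemma~\ref{Galois-closure-2}), while you sketch it inline; one small point worth making explicit in your injectivity step is that deducing each $\chi(\rho,\tau,\alpha_j)=0$ from $\sigma_\rho(\tau)=\op{id}_L$ uses the \emph{internal direct product} decomposition $\Gal(L/K)=\langle\eta_1\rangle\times\cdots\times\langle\eta_r\rangle$, not merely that it is abelian.
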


\begin{remark}
    Theorem \ref{Kummer-embedding} indeed gives a sharper embedding than the one from Theorem \ref{Main-Theorem-2}. In fact, as we showed in Remark \ref{Size-Remark}, the size of the wreath product in Theorem \ref{Main-Theorem-2} is equal to $|\Gal(L^c/K)|^{[K:F]} |\Gal(K^c/F)|$, while the size of the wreath product in Theorem \ref{Kummer-embedding} is  given by $|\Gal(L/K)|^{[K:F]} |\Gal(K^c/F)|$, which will be much smaller whenever $L/F$ is a non-Galois extension. In particular, in Example \ref{Kummer-example} we will give an instance of a 6-Kummer extension $L/K$ of degree 72 for which the latter wreath product is about 40 million times smaller than the first one!
\end{remark}

Now, recall that a field extension $L/K$ is said to be \textit{cyclic} if it is Galois and $\Gal(L/K)$ is cyclic. Then, since a cyclic extension of degree $n$ is an $n$-Kummer extension, we immediately obtain the following corollary.

\begin{corollary}\label{Cyclic-embedding}
Let $F \subseteq K \subseteq L$ be a tower of fields with $L/F$ finite and separable. Moreover, suppose that $K$ contains a primitive $n$-th root of unity $\omega$, that $L/K$ is a cyclic extension of degree $n$ and write $L = K(\sqrt[n]{\alpha})$ for some $\alpha \in K$. Denote the Galois closures of $K/F$ and $L/F$ by $K^c$ and $L^c$, respectively, let $\Gal(L/K) = \langle \eta \rangle = \{ \op{id}_L, \eta, \dots, \eta^{n-1} \} \simeq \Z/n\Z$, and let $\Omega = \op{Hom}_{F}(K, K^c)$ be the set of $F$-homomorphisms $K \hookrightarrow K^c$. The group $\Gal(K^c/F)$ acts on $\Omega$ on the left by composition of maps. Then the Galois group $\Gal(L^c/F)$ embeds into the wreath product $\Gal(L/K) \wr_{\Omega} \Gal(K^c/F)$. More precisely, consider the map 
	\begin{align}\label{Galois-embedding-cyclic}
		\varphi \colon &\Gal(L^c/F) \longrightarrow \Gal(L/K) \wr_{\Omega} \Gal(K^c/F) \\
		&\quad \quad \qquad  \rho \longmapsto (\sigma_{\rho}, \varepsilon(\rho)) \notag
	\end{align}
	where $\sigma_{\rho} \colon \op{Hom}_F(K, K^c) \longrightarrow \Gal(L/K)$ is defined for every embedding $\tau \in \op{Hom}_F(K, K^c)$ by
	$$
	   \sigma_{\rho}(\tau) = \eta^{\chi(\rho, \tau)},
	$$
with $\chi(\rho, \tau) \in \Z/n\Z$ chosen to satisfy 
    $$
      \tau(\omega)^{\chi(\rho, \tau)} = \frac{\rho\left(\sqrt[n]{\rho^{-1}(\tau(\alpha))}\right)}{\sqrt[n]{\tau(\alpha)}}.
    $$
	Then the map $\varphi$ is an injective group homomorphism.

\end{corollary}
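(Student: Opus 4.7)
My plan is to obtain Corollary \ref{Cyclic-embedding} as an immediate specialization of Theorem \ref{Kummer-embedding} to the case $r = 1$. First I would verify that the hypotheses of Theorem \ref{Kummer-embedding} are satisfied in the cyclic setting. Since $L/K$ is cyclic of degree $n$, its Galois group $\Gal(L/K) \simeq \Z/n\Z$ is in particular abelian of exponent dividing $n$, and since $K$ contains a primitive $n$-th root of unity $\omega$, the extension $L/K$ is by definition an $n$-Kummer extension. The hypothesis $L = K(\sqrt[n]{\alpha})$ for some $\alpha \in K$ is then the classical Kummer-theoretic description of a cyclic degree-$n$ extension, and matches the decomposition from Theorem \ref{Kummer-extensions} with the single-generator choice $r = 1$, $n_1 = n$, $\alpha_1 = \alpha$, and $\eta_1 = \eta$.

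Next, I would simply transcribe the conclusion of Theorem \ref{Kummer-embedding} under these choices. Since $\omega_1 = \omega^{n/n_1} = \omega$, the product formula for $\sigma_\rho(\tau)$ in Theorem \ref{Kummer-embedding} collapses to a single factor,
$$
\sigma_\rho(\tau) = \eta_1^{\chi(\rho,\tau,\alpha_1)} = \eta^{\chi(\rho,\tau)},
$$
where I have abbreviated $\chi(\rho,\tau,\alpha)$ as $\chi(\rho,\tau)$ since there is no ambiguity. In exactly the same way, the condition
$$
\tau(\omega_1)^{\chi(\rho,\tau,\alpha_1)} = \frac{\rho\!\left(\sqrt[n_1]{\rho^{-1}(\tau(\alpha_1))}\right)}{\sqrt[n_1]{\tau(\alpha_1)}}
$$
specializes verbatim to the condition stated in the corollary. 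Hence the map $\varphi$ of the corollary is literally the map produced by Theorem \ref{Kummer-embedding} in this setting, and the injectivity and homomorphism properties follow immediately.

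The hard part, in the larger picture, lies entirely in Theorem \ref{Kummer-embedding} itself; once that theorem is available, the corollary is essentially a matter of unwinding notation in the single-generator case. The one small point I would want to make explicit is that the presentation $L = K(\sqrt[n]{\alpha})$ assumed in the corollary really is the one arising from Theorem \ref{Kummer-extensions} with $r = 1$, so that the indices and formulas match directly without any reindexing.
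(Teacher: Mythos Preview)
Your proposal is correct and follows exactly the paper's approach: the paper simply notes that a cyclic extension of degree $n$ is an $n$-Kummer extension and deduces the corollary immediately from Theorem \ref{Kummer-embedding}. Your additional care in spelling out the specialization $r=1$, $n_1=n$, $\alpha_1=\alpha$, $\eta_1=\eta$, $\omega_1=\omega$ is a welcome expansion of what the paper leaves implicit.
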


In particular, an important special case of the previous corollary is the following.

\begin{corollary}\label{Quadratic-embedding}
Let $F \subseteq K \subseteq L$ be a tower of fields with $L/F$ finite and separable and $\op{char}(F) \neq 2$. Moreover, suppose that $L/K$ is a quadratic extension and write $L = K(\sqrt{\alpha})$ for some $\alpha \in K \smallsetminus K^2$. Denote the Galois closures of $K/F$ and $L/F$ by $K^c$ and $L^c$, respectively, let $\Gal(L/K) = \{ \op{id}_L, \eta \}$, and let $\Omega = \op{Hom}_{F}(K, K^c)$ be the set of $F$-homomorphisms $K \hookrightarrow K^c$. The group $\Gal(K^c/F)$ acts on $\Omega$ on the left by composition of maps. Then the Galois group $\Gal(L^c/F)$ embeds into the wreath product $\Gal(L/K) \wr_{\Omega} \Gal(K^c/F)$. More precisely, consider the map 
	\begin{align}\label{Galois-embedding-2}
		\varphi \colon &\Gal(L^c/F) \longrightarrow \Gal(L/K) \wr_{\Omega} \Gal(K^c/F) \\
		&\quad \quad \qquad  \rho \longmapsto (\sigma_{\rho}, \varepsilon(\rho)) \notag
	\end{align}
	where $\sigma_{\rho} \colon \op{Hom}_F(K, K^c) \longrightarrow \Gal(L/K)$ is defined for every embedding $\tau \in \op{Hom}_F(K, K^c)$ by
	$$
	   \sigma_{\rho}(\tau) = \eta^{\chi(\rho, \tau)},
	$$
with $\chi(\rho, \tau) \in \Z/2\Z$ chosen to satisfy 
    $$
      (-1)^{\chi(\rho, \tau)} = \frac{\rho\left(\sqrt{\rho^{-1}(\tau(\alpha))}\right)}{\sqrt{\tau(\alpha)}} \in \{\pm 1 \}.
    $$
	Then the map $\varphi$ is an injective group homomorphism.

\end{corollary}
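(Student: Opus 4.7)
The plan is to deduce Corollary \ref{Quadratic-embedding} as the specialization $n=2$ of Corollary \ref{Cyclic-embedding}, which has already been established. First I would verify that the hypotheses of Corollary \ref{Cyclic-embedding} are met with $n = 2$ and $\omega = -1$: since $\op{char}(F) \neq 2$ and $F \subseteq K$, we have $\op{char}(K) \neq 2$, so $-1 \in F \subseteq K$ has multiplicative order $2$ and is a primitive second root of unity. The hypothesis $\alpha \in K \smallsetminus K^2$ then gives the presentation $L = K(\sqrt{\alpha})$ of $L/K$ as a cyclic extension of degree $2$ with $\Gal(L/K) = \{\op{id}_L, \eta\}$, where $\eta(\sqrt{\alpha}) = -\sqrt{\alpha}$.

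Next, I would apply Corollary \ref{Cyclic-embedding} with these choices. The embedding
$$
\varphi(\rho) = (\sigma_\rho, \varepsilon(\rho)), \qquad \sigma_\rho(\tau) = \eta^{\chi(\rho,\tau)},
$$
and the relation
$$
\tau(\omega)^{\chi(\rho,\tau)} \;=\; \frac{\rho\!\left(\sqrt{\rho^{-1}(\tau(\alpha))}\right)}{\sqrt{\tau(\alpha)}}
$$
transfer directly from Corollary \ref{Cyclic-embedding}. The only simplification required is on the left-hand side: since $\omega = -1$ lies in the base field $F$ and $\tau \in \op{Hom}_F(K, K^c)$ fixes $F$ pointwise, we have $\tau(\omega) = -1$, so $\tau(\omega)^{\chi(\rho,\tau)} = (-1)^{\chi(\rho,\tau)}$. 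This reproduces verbatim the defining relation stated in Corollary \ref{Quadratic-embedding}, and the injectivity and group homomorphism properties are inherited directly.

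It remains to remark that the right-hand side indeed belongs to $\{\pm 1\}$, which is automatic: both $\rho\!\left(\sqrt{\rho^{-1}(\tau(\alpha))}\right)$ and $\sqrt{\tau(\alpha)}$ are square roots of $\tau(\alpha)$ in $L^c$, so their ratio is a square root of $1$, hence is $\pm 1$. There is essentially no main obstacle to overcome; the argument is purely a specialization check, and the only bookkeeping point is confirming that $\op{char}(F) \neq 2$ is precisely the condition ensuring that $-1 \in K$ is a primitive second root of unity, after which the conclusion follows verbatim from Corollary \ref{Cyclic-embedding}.
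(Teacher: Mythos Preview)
Your proposal is correct and matches the paper's approach exactly: the paper introduces Corollary \ref{Quadratic-embedding} with the phrase ``In particular, an important special case of the previous corollary is the following,'' so the intended proof is precisely the specialization $n=2$, $\omega=-1$ of Corollary \ref{Cyclic-embedding} that you carry out. Your added verifications (that $\op{char}(F)\neq 2$ makes $-1$ a primitive second root of unity, that $\tau(-1)=-1$, and that the ratio lies in $\{\pm 1\}$) are the natural bookkeeping points and are all correct.
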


\begin{remark}
We note that Corollary \ref{Quadratic-embedding} gives a sharper version of \cite[Lemma 2.8]{CO12} (see Theorem \ref{Chai-Oort-Theorem}) and a more general version of \cite[Proposition 3.1]{BSMT17} (see Theorem \ref{Barquero-Sanchez-Masri-Thorne-Theorem}). In particular, as we will show in Section \ref{Technical-section}, we can use Corollary \ref{Quadratic-embedding} in combination with the technical results for identifying wreath products with isomorphic components from that section, to deduce these two theorems.
\end{remark}




\subsection{Organization of the paper}

The paper is organized as follows. In Section \ref{Applications} we briefly survey some of the recent applications of embedding theorems for Galois groups into wreath products. In Section \ref{Wreath-section} we give the basic definitions and notation for wreath products and in Section \ref{Wreath-properties-section} we prove some results about isomorphisms and subgroups of wreath products which are useful in identifying wreath products of Galois groups when used in combination with our embedding results. Then, in Section \ref{Kaloujnine-Krasner-section} we explain the extension problem for groups and give a detailed proof of the Kaloujnine-Krasner Universal Embedding Theorem, from which we deduce the explicit embedding of Theorem \ref{Main-Theorem}. In Section \ref{Main-Theorem-2-section} we prove Theorem \ref{Main-Theorem-2} and in Section \ref{Kummer-section} we prove Theorem \ref{Kummer-embedding}. Finally, in Section \ref{Examples-section} we give some detailed examples of our main theorems.

\subsection{Experimental data on the sizes of wreath products}\label{tables-and-graphs-intro}

In this subsection we give some numerical computations of the sizes of the wreath products appearing in our main theorems and we compare them in order to highlight the relative differences in their sizes. The data contained in Table \ref{Wreath-size-table} and in Figures \ref{plot_K_F_3}, \ref{plot_K_F_4} and \ref{plot_K_F_5} is explained in Remark \ref{Size-Remark}.

Moreover, in Figures \ref{plot_K_F_3}, \ref{plot_K_F_4} and \ref{plot_K_F_5}, we plot the logarithms of the sizes of the wreath products appearing in Theorems \ref{Main-Theorem} and \ref{Main-Theorem-2}, namely $\log(|\Gal(L^c/K^c) \wr_r \Gal(K^c/F)|)$ (in red), and $\log(|\Gal(L^c/K) \wr_{\Omega} \Gal(K^c/F)|)$ (in blue), respectively, as functions of $m=[L^c:F]$, in the cases in which $K/F$ is non-Galois and $2 \leq [K:F] \leq 5$. Additionally, as was observed in Remark \ref{Size-Remark}, the regular wreath product from Theorem \ref{Main-Theorem} is strictly bigger than the wreath product from Theorem \ref{Main-Theorem-2} when $m \geq 2[K^c : F]$, except in the cases in which $\Gal(K^c/F)$ is dihedral. Thus, we have highlited the vertical line $m = 2[K^c:F]$ in the plots.

\begin{table}[H]
{\renewcommand{\arraystretch}{2.2}
\begin{tabular}{@{}ccccc@{}}
\toprule
$[K:F]$ & $[K^c:F]$ & $\Gal(K^c/F)$                                   & $|\Gal(L^c/K^c) \wr_r \Gal(K^c/F)|$                    & $|\Gal(L^c/K) \wr_{\Omega} \Gal(K^c/F)|$ \\ \midrule
2                  & 2  & $C_2$            & $\dfrac{m^2}{2}$                    & $\dfrac{m^2}{2}$    \\ \midrule
\multirow{2}{*}{3} & 3  & $C_3$            & $\dfrac{m^3}{9}$                    & $\dfrac{m^3}{9}$    \\
                   & 6  & $S_3$            & $\dfrac{m^6}{6^5}$                  & $\dfrac{2m^3}{9}$   \\ \midrule
\multirow{5}{*}{4} & 4  & $C_4$            & $\dfrac{m^4}{64}$                   & $\dfrac{m^4}{64}$   \\
                   & 4  & $C_2 \times C_2$ & $\dfrac{m^4}{64}$                   & $\dfrac{m^4}{64}$   \\
                   & 8  & $D_4$            & $\dfrac{m^8}{2^{21}}$               & $\dfrac{m^4}{2^5}$  \\
                   & 12 & $A_4$            & $\dfrac{m^{12}}{2^{22} \cdot 3^{11}}$ & $\dfrac{3m^4}{64}$  \\
                   & 24 & $S_4$            & $\dfrac{m^{24}}{2^{69} \cdot 3^{23}}$ & $\dfrac{3m^4}{32}$  \\ \midrule
\multirow{5}{*}{5} & 5  & $C_5$            & $\dfrac{m^5}{625}$                  & $\dfrac{m^5}{625}$  \\
                   & 10 & $D_5$            & $\dfrac{m^{10}}{2^{9} \cdot 5^{9}}$ & $\dfrac{2m^5}{625}$ \\
        & 20      & $F_5 \simeq \operatorname{AGL}(1, \mathbb{F}_5)$ & $\dfrac{m^{20}}{2^{38} \cdot 5^{19}}$                  & $\dfrac{4m^5}{625}$                      \\
        & 60      & $A_5$                                           & $\dfrac{m^{60}}{2^{118} \cdot 3^{59} \cdot 5^{59}}$    & $\dfrac{12m^5}{625}$                      \\
        & 120     & $S_5$                                           & $\dfrac{m^{120}}{2^{357} \cdot 3^{119} \cdot 5^{119}}$ & $\dfrac{24m^5}{625}$                     \\ \bottomrule
\end{tabular}
}
\caption{Comparison of the sizes of the wreath products appearing in Theorems \ref{Main-Theorem} and \ref{Main-Theorem-2}. Here we have given the formulas in terms of $m := |\Gal(L^c/F)| = [L^c:F]$. In particular, we always have $[K^c:F]$ divides $m$, so the fractions appearing in the table are integers.}
\label{Wreath-size-table}
\end{table}

\begin{remark}
Note than in each row of Table \ref{Wreath-size-table} we have $m \geq 2[K^c:F]$ for a nontrivial tower of fields $F \subsetneq K \subsetneq L$. More details about the information of this table are given in Remarks \ref{Same-group-remark} and \ref{Size-Remark}. Moreover, for the definition of the one dimensional affine linear group $\operatorname{AGL}(1, \mathbb{F}_5)$ see e.g. Section \ref{Solvability-application}.
\end{remark}

\begin{figure}[H]
    \begin{subfigure}[b]{0.45\textwidth}
        \begin{tikzpicture}[scale=0.8]
            \begin{axis}[
                legend pos=south east,
                xlabel={$m$},
                title={$\Gal(K^c/F) \simeq S_3 \quad [K^c:F]=6$},
                xtick = {6,12,24,36,48,60},]
                \addplot[
                    scatter,only marks,scatter src=explicit symbolic,
                    scatter/classes={
                    omega={GaloisBlue},
                    regular={GaloisRed}
                    },
                ]
                table[x=x,y=y,meta=label]{
                    x    y    label
                    6	1.791759469228055	 regular
                    12	5.950642552587727	 regular
                    18	8.383433201236713	 regular
                    24	10.1095256359474	 regular
                    30	11.448386943832658	 regular
                    36	12.542316284596385	 regular
                    42	13.467220363559935	 regular
                    48	14.26840871930707	 regular
                    54	14.975106933245371	 regular
                    60	15.60727002719233	 regular
                    6	3.871201010907891	 omega
                    12	5.950642552587727	 omega
                    18	7.16703787691222	 omega
                    24	8.030084094267563	 omega
                    30	8.699514748210191	 omega
                    36	9.246479418592056	 omega
                    42	9.708931458073831	 omega
                    48	10.1095256359474	 omega
                    54	10.462874742916549	 omega
                    60	10.778956289890028	 omega
                };
                \draw [dashed, very thick, color=gray] (12,-1) -- (12,20);
            \end{axis}
        \end{tikzpicture}
     \end{subfigure}
     \caption{The sizes of the wreath products when $[K:F]=3$.}
     \label{plot_K_F_3}
\end{figure}
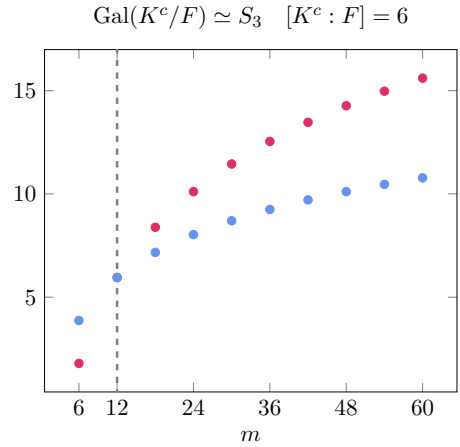

\begin{figure}[H]
    \begin{subfigure}[b]{0.45\textwidth}
        \begin{tikzpicture}[scale=0.8]
            \begin{axis}[
                legend pos=south east,
                xlabel={$m$},
                title={$\Gal(K^c/F) \simeq D_4 \quad [K^c:F]=8$},
                xtick = {8,16,32,48,64,80}]
                \addplot[
                    scatter,only marks,scatter src=explicit symbolic,
                    scatter/classes={
                        omega={GaloisBlue},
                        regular={GaloisRed}
                    },
                ]
                table[x=x,y=y,meta=label]{
                    x    y    label
                    8	2.0794415416798357	 regular
                    16	7.6246189861593985	 regular
                    24	10.868339851024713	 regular
                    32	13.16979643063896	 regular
                    40	14.954944841152638	 regular
                    48	16.413517295504278	 regular
                    56	17.646722734122342	 regular
                    64	18.714973875118524	 regular
                    72	19.65723816036959	 regular
                    80	20.5001222856322	 regular
                    8	4.852030263919617	 omega
                    16	7.6246189861593985	 omega
                    24	9.246479418592056	 omega
                    32	10.39720770839918	 omega
                    40	11.289781913656018	 omega
                    48	12.019068140831838	 omega
                    56	12.63567086014087	 omega
                    64	13.16979643063896	 omega
                    72	13.640928573264494	 omega
                    80	14.0623706358958	 omega
                };
                 \draw [dashed, very thick, color=gray] (16,-1) -- (16,26);
            \end{axis}
        \end{tikzpicture}
     \end{subfigure}
    \begin{subfigure}[b]{0.45\textwidth}
        \begin{tikzpicture}[scale=0.8]
            \begin{axis}[
                legend pos=south east,
                xlabel={$m$},
                title={$\Gal(K^c/F) \simeq A_4 \quad [K^c:F]=12$},
                xtick = {12,24,48,72,96,120}]
                \addplot[
                    scatter,only marks,scatter src=explicit symbolic,
                    scatter/classes={
                        omega={GaloisBlue},
                        regular={GaloisRed}
                    },
                ]
                table[x=x,y=y,meta=label]{
                    x    y    label
                    12	2.4849066497880004	 regular
                    24	10.802672816507345	 regular
                    36	15.668254113805316	 regular
                    48	19.120438983226688	 regular
                    60	21.798161598997204	 regular
                    72	23.98602028052466	 regular
                    84	25.83582843845176	 regular
                    96	27.438205149946032	 regular
                    108	28.85160157782263	 regular
                    120	30.11592776571655	 regular
                    12	6.879355804460439	 omega
                    24	9.65194452670022	 omega
                    36	11.273804959132878	 omega
                    48	12.424533248940001	 omega
                    60	13.31710745419684	 omega
                    72	14.04639368137266	 omega
                    84	14.662996400681692	 omega
                    96	15.197121971179783	 omega
                    108	15.668254113805316	 omega
                    120	16.08969617643662	 omega
                };
                 \draw [dashed, very thick, color=gray] (24,-1) -- (24,38);
            \end{axis}
        \end{tikzpicture}
     \end{subfigure}
    \begin{subfigure}[b]{0.45\textwidth}
        \begin{tikzpicture}[scale=0.8]
            \begin{axis}[
                legend pos=south east,
                xlabel={$m$},
                title={$\Gal(K^c/F) \simeq S_4 \quad [K^c:F]=24$},
                xtick = {24,48,96,144,192,240}]
                \addplot[
                    scatter,only marks,scatter src=explicit symbolic,
                    scatter/classes={
                        omega={GaloisBlue},
                        regular={GaloisRed}
                    },
                ]
                table[x=x,y=y,meta=label]{
                    x    y    label
                    24	3.1780538303479458	 regular
                    48	19.81358616378663	 regular
                    72	29.54474875838258	 regular
                    96	36.44911849722532	 regular
                    120	41.804563728766354	 regular
                    144	46.180281091821264	 regular
                    168	49.879897407675465	 regular
                    192	53.08465083066401	 regular
                    216	55.91144368641721	 regular
                    240	58.44009606220504	 regular
                    24	10.345091707260165	 omega
                    48	13.117680429499947	 omega
                    72	14.739540861932605	 omega
                    96	15.890269151739728	 omega
                    120	16.782843356996565	 omega
                    144	17.512129584172385	 omega
                    168	18.128732303481417	 omega
                    192	18.662857873979508	 omega
                    216	19.133990016605043	 omega
                    240	19.555432079236347	 omega
                };
                 \draw [dashed, very thick, color=gray] (48,-1) -- (48,64);
            \end{axis}
        \end{tikzpicture}
     \end{subfigure}
    \caption{The sizes of the wreath products when $[K:F]=4$.}
    \label{plot_K_F_4}
\end{figure}

\begin{figure}[H]
\hspace{2mm}
    \begin{subfigure}[b]{0.45\textwidth}
        \begin{tikzpicture}[scale=0.8]
            \begin{axis}[
                legend pos=south east,
                xlabel={$m$},
                title={$\Gal(K^c/F) \simeq D_5 \quad [K^c:F]=10$},
                xtick = {10,20,60,80,100}]
                \addplot[
                    scatter,only marks,scatter src=explicit symbolic,
                    scatter/classes={
                        omega={GaloisBlue},
                        regular={GaloisRed}
                    },
                ]
                table[x=x,y=y,meta=label]{
                    x    y    label
                    10	2.302585092994046	 regular
                    20	9.234056898593499	 regular
                    30	13.288707979675143	 regular
                    40	16.16552870419295	 regular
                    50	18.39696421733505	 regular
                    60	20.220179785274595	 regular
                    70	21.761686583547178	 regular
                    80	23.097000509792405	 regular
                    90	24.27483086635624	 regular
                    100	25.328436022934504	 regular
                    10	5.768320995793772	 omega
                    20	9.234056898593499	 omega
                    30	11.261382439134321	 omega
                    40	12.699792801393226	 omega
                    50	13.815510557964275	 omega
                    60	14.727118341934048	 omega
                    70	15.49787174107034	 omega
                    80	16.16552870419295	 omega
                    90	16.75444388247487	 omega
                    100	17.281246460764002	 omega
                };
                 \draw [dashed, very thick, color=gray] (20,-1) -- (20,36);
            \end{axis}
        \end{tikzpicture}
     \end{subfigure}
    \begin{subfigure}[b]{0.45\textwidth}
        \begin{tikzpicture}[scale=0.8]
            \begin{axis}[
                legend pos=south east,
                xlabel={$m$},
                title={$\Gal(K^c/F) \simeq F_5 \simeq \operatorname{AGL}(1, \mathbb{F}_5) \quad [K^c:F]=20$},
                xtick = {20,40,80,120,160,200}]
                \addplot[
                    scatter,only marks,scatter src=explicit symbolic,
                    scatter/classes={
                        omega={GaloisBlue},
                        regular={GaloisRed}
                    },
                ]
                table[x=x,y=y,meta=label]{
                    x    y    label
                    20	2.995732273553991	 regular
                    40	16.8586758847529	 regular
                    60	24.967978046916183	 regular
                    80	30.721619495951803	 regular
                    100	35.184490522236	 regular
                    120	38.83092165811509	 regular
                    140	41.91393525466026	 regular
                    160	44.58456310715071	 regular
                    180	46.940223820278376	 regular
                    200	49.047434133434905	 regular
                    20	9.927204079153444	 omega
                    40	13.392939981953171	 omega
                    60	15.420265522493992	 omega
                    80	16.8586758847529	 omega
                    100	17.974393641323946	 omega
                    120	18.88600142529372	 omega
                    140	19.65675482443001	 omega
                    160	20.324411787552624	 omega
                    180	20.913326965834543	 omega
                    200	21.44012954412367	 omega
                };
                 \draw [dashed, very thick, color=gray] (40,-1) -- (40,68);
            \end{axis}
        \end{tikzpicture}
     \end{subfigure}
    \begin{subfigure}[b]{0.45\textwidth}
        \begin{tikzpicture}[scale=0.8]
            \begin{axis}[
                legend pos=south east,
                xlabel={$m$},
                title={$\Gal(K^c/F) \simeq A_5 \quad [K^c:F]=60$},
                xtick = {60,120,240,360,480,600}]
                \addplot[
                    scatter,only marks,scatter src=explicit symbolic,
                    scatter/classes={
                        omega={GaloisBlue},
                        regular={GaloisRed}
                    },
                ]
                table[x=x,y=y,meta=label]{
                    x    y    label
                    60	4.0943445622221	 regular
                    120	45.68317539581882	 regular
                    180	70.01108188230869	 regular
                    240	87.27200622941554	 regular
                    300	100.66061930826812	 regular
                    360	111.5999127159054	 regular
                    420	120.8489535055409	 regular
                    480	128.86083706301227	 regular
                    540	135.92781920239526	 regular
                    600	142.24945014186486	 regular
                    60	16.518877811162103	 omega
                    120	19.984613713961828	 omega
                    180	22.01193925450265	 omega
                    240	23.450349616761557	 omega
                    300	24.566067373332604	 omega
                    360	25.47767515730238	 omega
                    420	26.248428556438668	 omega
                    480	26.916085519561282	 omega
                    540	27.5050006978432	 omega
                    600	28.03180327613233	 omega
                };
                 \draw [dashed, very thick, color=gray] (120,-1) -- (120,160);
            \end{axis}
        \end{tikzpicture}
     \end{subfigure}
    \begin{subfigure}[b]{0.45\textwidth}
        \begin{tikzpicture}[scale=0.8]
            \begin{axis}[
                legend pos=south east,
                xlabel={$m$},
                title={$\Gal(K^c/F) \simeq S_5 \quad [K^c:F]=120$},
                xtick = {120,240,480,720,960,1200}]
                \addplot[
                    scatter,only marks,scatter src=explicit symbolic,
                    scatter/classes={
                        omega={GaloisBlue},
                        regular={GaloisRed}
                    },
                ]
                table[x=x,y=y,meta=label]{
                    x    y    label
                    120	4.787491742782046	 regular
                    240	87.96515340997549	 regular
                    360	136.6209663829552	 regular
                    480	171.14281507716893	 regular
                    600	197.92004123487408	 regular
                    720	219.79862805014864	 regular
                    840	238.29670962941964	 regular
                    960	254.32047674436237	 regular
                    1080	268.45444102312837	 regular
                    1200	281.0977029020675	 regular
                    120	20.677760894521775	 omega
                    240	24.1434967973215	 omega
                    360	26.170822337862322	 omega
                    480	27.609232700121225	 omega
                    600	28.724950456692277	 omega
                    720	29.636558240662048	 omega
                    840	30.40731163979834	 omega
                    960	31.074968602920954	 omega
                    1080	31.66388378120287	 omega
                    1200	32.190686359492005	 omega
                };
                 \draw [dashed, very thick, color=gray] (240,-1) -- (240,320);
            \end{axis}
        \end{tikzpicture}
     \end{subfigure}
    \caption{The sizes of the wreath products when $[K:F]=5$.}
    \label{plot_K_F_5}
\end{figure}
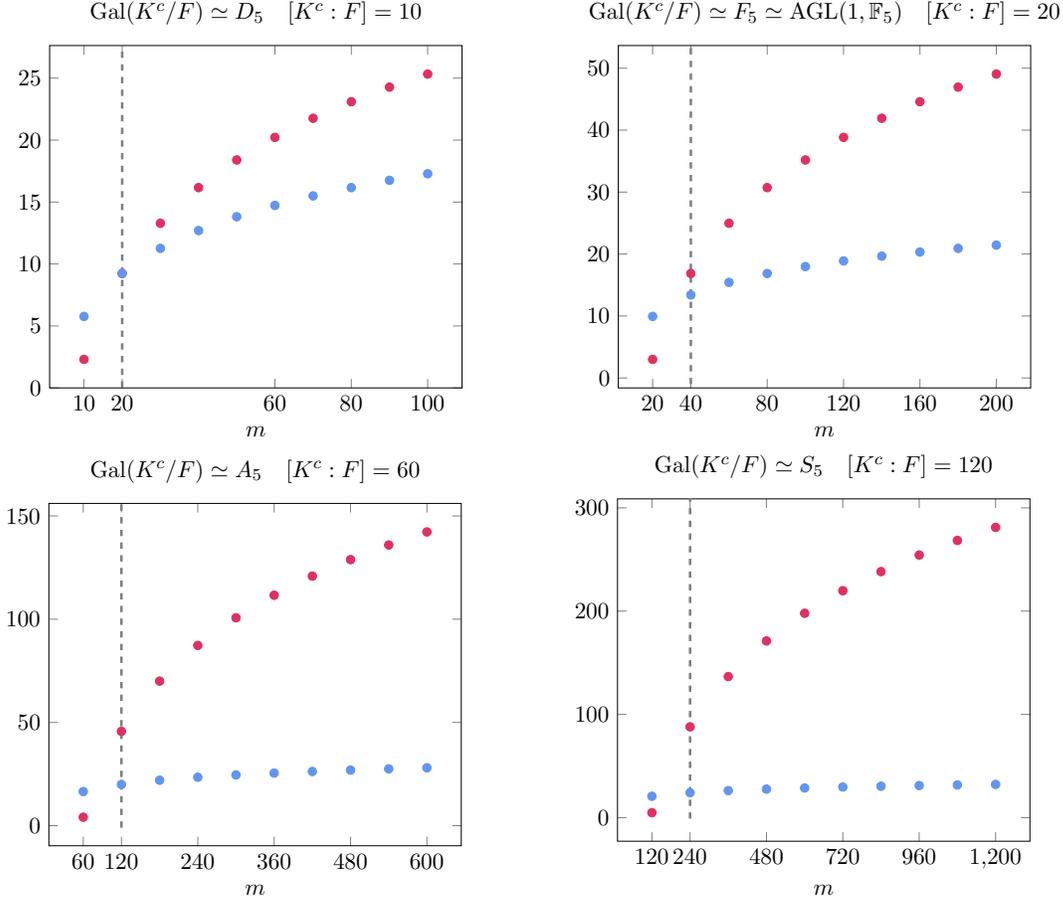

\section{Some applications of the embedding of Galois groups into wreath products}\label{Applications}

In this section we illustrate with some explicit examples, instances where embedding theorems for Galois groups into wreath products like the ones we prove in the paper, and also some instances for Galois groups that are equal to certain wreath products, have been used in recent applications in field theory, arithmetic statistics, number theory and arithmetic geometry.

The background required for each application increases as we move from one to the next. We have made an effort to give enough details for the reader to get a reasonable, if superficial, understanding of each application, and we have given several references so that the interested reader can read further on each one. Nevertheless, naturally we won't be able to define every term, so the reader who doesn't already know some of the concepts that are left undefined in the applications, will have to take them as black boxes and consult the references for the definitions and details.


\subsection{Application to the solvability by radicals of imprimitive polynomials}\label{Solvability-application}

For the details of the material described in this subsection the reader is referred to \cite[\S 6.4 and \S 14.2]{Cox12}.

Let $p$ be a prime and consider the finite field $\mathbb{F}_p$. For $a, b \in \mathbb{F}_p$, let $\gamma_{a, b} \colon \mathbb{F}_p \to \mathbb{F}_p$ be the affine linear transformation defined by $\gamma_{a, b}(t) = at + b$ for every $t \in \mathbb{F}_p$. Such an affine linear transformation is bijective if and only if $a \neq 0$. Then, the \textit{one dimensional affine linear group over $\mathbb{F}_p$} is the group defined by

$$
\operatorname{AGL}(1, \mathbb{F}_p) := \{ \gamma_{a, b} \suchthat (a, b) \in \mathbb{F}_p^{\times} \times \mathbb{F}_p \},
$$
with group operation given by composition of functions. This group has order $p(p-1)$ and moreover $\operatorname{AGL}(1, \mathbb{F}_p) \simeq \mathbb{F}_p \rtimes_{\theta} \mathbb{F}_p^{\times}$, where  $\theta \colon \mathbb{F}_{p}^{\times} \to \operatorname{Aut}(\mathbb{F}_p)$ is given, for every $c \in \mathbb{F}_p^{\times}$, by $\theta_c(t) = ct$ for every $t \in \mathbb{F}_p$. An explicit isomorphism is given by $\psi \colon \operatorname{AGL}(1, \mathbb{F}_p) \longrightarrow \mathbb{F}_p \rtimes_{\theta} \mathbb{F}_p^{\times}$, where $\psi(\gamma_{a, b}) = (b, a)$ for every $\gamma_{a, b} \in \operatorname{AGL}(1, \mathbb{F}_p)$.

Now, let $G$ be a subgroup of the symmetric group $S_n$. Then $G$ is said to be \textit{imprimitive} if there exists a partition 
$$
\{ 1, \dots, n \} = R_1 \cup \cdots \cup R_k
$$
with $k > 1$ and $|R_i| > 1$ for at least one $i \in \{ 1, \dots, k \}$, such that for every $\tau \in G$ and every $i \in \{ 1, \dots, k \}$ we have $\tau(R_i) = R_j$ for some $j \in \{ 1, \dots, k\}$. Moreover, if no such partition exists, then $G$ is said to be \textit{primitive}.

In particular, this terminology can also be used for polynomials. More precisely, if $f \in F[x]$ is a separable polynomial over a field $F$, then $f$ is said to be \textit{imprimitive} if its Galois group $\Gal(f)$ over $F$, thought as a permutation group on the roots of $f$, is imprimitive. Similarly, if its Galois group is not imprimitive, the polynomial $f$ is said to be \textit{primitive}.

\begin{remark}\label{imprimitive-wreath}
Let $n \geq 4$ be composite and let $n = \ell k$ be a nontrivial factorization. Then, the wreath product $S_{\ell} \wr_{\Omega} S_{k}$, where $\Omega:= \{ 1, \dots, k \}$ and $S_k$ acts on $\Omega$ as in Remark \ref{tuple-remark}, is an example of an imprimitive subgroup of $S_n$. For this, see e.g. \cite[\S 14.2 B]{Cox12}.
\end{remark}

Then, the solvability by radicals of an imprimitive polynomial of degree $p^2$ is characterized as follows (see e.g. \cite[Corollary 14.2.16]{Cox12}).

\begin{theorem}\label{Imprimitive-Theorem}
    Let $F$ be a field of characteristic 0, let $f(x) \in F[x]$ be an irreducible imprimitive polynomial of degree $p^2$, and let $L$ be a splitting field for $f$ over F. Then $f$ is solvable by radicals over $F$ if and only if the Galois group $\Gal(L/F)$ of $f$ over $F$ embeds into the wreath product $\operatorname{AGL}(1, \mathbb{F}_p) \wr_{\Omega} \operatorname{AGL}(1, \mathbb{F}_p)$, where $\Omega := \mathbb{F}_p$ and $\operatorname{AGL}(1, \mathbb{F}_p)$ acts on $\Omega$ by evaluation, i.e., for every $t \in \Omega$ and every $\gamma_{a, b} \in \operatorname{AGL}(1, \mathbb{F}_p)$ we have $\gamma_{a, b} \cdot t := \gamma_{a, b}(t) = at+b$.
\end{theorem}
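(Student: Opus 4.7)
For the easy direction ($\Leftarrow$), I would observe that $\operatorname{AGL}(1, \mathbb{F}_p) \simeq \mathbb{F}_p \rtimes \mathbb{F}_p^{\times}$ is metabelian and hence solvable, and since wreath products of solvable groups are solvable, any embedding of $\Gal(L/F)$ into $\operatorname{AGL}(1, \mathbb{F}_p) \wr_{\Omega} \operatorname{AGL}(1, \mathbb{F}_p)$ forces $\Gal(L/F)$ to be solvable. Since $\op{char}(F) = 0$, the classical Galois criterion for solvability by radicals then implies that $f$ is solvable by radicals.

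For the harder direction ($\Rightarrow$), the plan is to realize $G := \Gal(L/F)$ naturally as an imprimitive subgroup of $S_{p^2}$ and refine the resulting embedding into $S_p \wr S_p$ (see Remark \ref{imprimitive-wreath}) using solvability. Because $p$ is prime and $f$ is imprimitive of degree $p^2$, the block system must consist of exactly $p$ blocks $B_1, \dots, B_p$ of size $p$, yielding an embedding $G \hookrightarrow S_p \wr_{\Omega_0} S_p$ with $\Omega_0 = \{B_1, \dots, B_p\}$ permuted by the top copy of $S_p$.

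The next step is to invoke the classical theorem that every transitive solvable subgroup of $S_p$ is, after a suitable relabeling, contained in $\operatorname{AGL}(1, \mathbb{F}_p)$ (see e.g. \cite[\S 14.1]{Cox12}). Applied to the projection $\pi \colon G \to \op{Sym}(\Omega_0)$ giving the action on blocks, which is transitive (since $G$ acts transitively on the roots of $f$) and solvable (as a quotient of $G$), this yields an identification $\Omega_0 \simeq \mathbb{F}_p$ with $\pi(G) \subseteq \operatorname{AGL}(1, \mathbb{F}_p)$. Applied to the image of the block stabilizer $M := \op{Stab}_G(B_1)$ in $\op{Sym}(B_1)$, which is likewise transitive and solvable, it yields an identification $B_1 \simeq \mathbb{F}_p$ under which $M$ acts through $\operatorname{AGL}(1, \mathbb{F}_p)$. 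Transporting this identification to the other blocks via fixed elements $g_i \in G$ with $g_i(B_1) = B_i$, one then checks that for every $h \in G$ and every $i$, the induced map $B_i \to B_{\pi(h)(i)}$ corresponds, under the chosen identifications, to the action on $B_1$ of the element $g_{\pi(h)(i)}^{-1} \circ h \circ g_i \in M$, which lies in $\operatorname{AGL}(1, \mathbb{F}_p)$.

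The main obstacle lies in checking the compatibility of these identifications so that the resulting map $G \to \operatorname{AGL}(1, \mathbb{F}_p) \wr_{\mathbb{F}_p} \operatorname{AGL}(1, \mathbb{F}_p)$ is a well-defined injective group homomorphism whose ambient wreath product has the specific form claimed, with $\operatorname{AGL}(1, \mathbb{F}_p)$ acting on $\Omega = \mathbb{F}_p$ by evaluation. I note that Theorem \ref{Main-Theorem-2} applied to the tower $F \subseteq K \subseteq F(\alpha)$, with $K$ the fixed field of $M$, does produce an embedding $G \hookrightarrow \Gal(L/K) \wr_{\Omega} \Gal(K^c/F)$ whose second factor already sits inside $\operatorname{AGL}(1, \mathbb{F}_p)$, but its first factor $\Gal(L/K)$ can be strictly larger than $\operatorname{AGL}(1, \mathbb{F}_p)$ (for example, for $f(x) = x^4 - 2$ over $\Q$ one has $\Gal(L/K) \simeq C_2 \times C_2$ while $\operatorname{AGL}(1, \mathbb{F}_2) \simeq C_2$), so the sharper refinement sketched above via the natural imprimitive permutation embedding is still needed to land inside the target wreath product.
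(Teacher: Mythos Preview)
The paper does not give its own proof of this statement: it is quoted as \cite[Corollary 14.2.16]{Cox12} and used only to illustrate an application of wreath-product embeddings. So there is no in-paper argument to compare against.

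Your outline is correct and is essentially the standard argument one finds in Cox. The easy direction is exactly as you say. For the harder direction, the block-system step is forced by primality of $p$, and the two invocations of Galois' theorem on transitive solvable subgroups of $S_p$ are the heart of the matter. The ``main obstacle'' you flag is not a genuine obstacle: once you fix the identification $\phi_1 \colon \mathbb{F}_p \to B_1$ placing the image of $M$ inside $\operatorname{AGL}(1,\mathbb{F}_p)$ and transport it via $\phi_i := g_i \circ \phi_1$, the resulting map $h \mapsto \big((\sigma_i)_i,\pi(h)\big)$ with $\sigma_i = \phi_1^{-1}\circ(g_{\pi(h)(i)}^{-1}\circ h\circ g_i)\circ\phi_1$ is precisely the standard imprimitive embedding $G \hookrightarrow S_p \wr_{\Omega} S_p$ written in coordinates, so it is automatically an injective homomorphism; your computation that each $\sigma_i$ lies in $\operatorname{AGL}(1,\mathbb{F}_p)$ then finishes the job.

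Your observation that Theorem \ref{Main-Theorem-2} applied to the tower $F \subseteq K \subseteq F(\alpha)$ does not directly yield the target (because $\Gal(L/K)$ can exceed $\operatorname{AGL}(1,\mathbb{F}_p)$, as in your $x^4-2$ example) is a nice cautionary remark, and it correctly explains why one must work with the permutation action on the roots rather than with the abstract Galois-theoretic embedding of Theorem \ref{Main-Theorem-2}.
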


\begin{example}
Consider the polynomial $f(x) = x^9 - 3x^8 + 2x^7 + 2x^4 - 2x^2 - 1 \in \Q[x]$. This corresponds to the entry \cite[\href{https://www.lmfdb.org/NumberField/9.1.74839217.1}{Number field 9.1.74839217.1}]{LMFDB} in the \texttt{LMFDB} database. Then $f$ is irreducible over $\Q$ and its Galois group over $\Q$ is $\Gal(f) \simeq S_3 \wr_{\widehat{\Omega}} S_3$, where $\widehat{\Omega} := \{ 1, 2, 3 \}$ is an $S_3$-set with the natural action by permutations. Hence, by Remark \ref{imprimitive-wreath}, the polynomial $f$ is imprimitive. Note that $|\Gal(f)| = |S_3 \wr_{\widehat{\Omega}} S_3| = |S_3|^{\widehat{\Omega}} \cdot |S_3| = 6^3 \cdot 6 = 1296$. Moreover, we observe that $\operatorname{AGL}(1, \mathbb{F}_3)$ is a non-abelian group of order $6$, and hence $\operatorname{AGL}(1, \mathbb{F}_3) \simeq S_3$. Therefore, by Proposition \ref{Omega-wreath-isomorphism}, we have $\Gal(f) \simeq \operatorname{AGL}(1, \mathbb{F}_3) \wr_{\Omega} \operatorname{AGL}(1, \mathbb{F}_3)$ with $\Omega := \mathbb{F}_3$. Then, by Theorem \ref{Imprimitive-Theorem}, we conclude that the polynomial $f$ is solvable by radicals.
\end{example}

\subsection{Application to the distribution of number fields and the Malle Conjectures}

In 2002 and 2004, G. Malle \cite{Mal02, Mal04} made two very precise conjectures regarding the density of number fields with a given Galois group. Specifically, for a number field $K$, a finite transitive permutation group $G \leq S_n$ and a positive real number $x$, he defined the counting function
$$
N(K,G;x) = | \{L/K \suchthat \Gal(L^c/K) \simeq G \text{ and } \left| \sN_{K/\Q}(d(L/K)) \right| \leq x \} |,
$$
which counts the number of field extensions $L$ of $K$ of degree $n$, whose Galois closure $L^c$ has Galois group over $K$ isomorphic to the given group $G$, and such that the absolute value of the norm of their relative discriminant is bounded by $x$. 

For any permutation group $G \leq S_n$, Malle defines a constant $0 \leq a(G) \leq 1$, which is a rational number that depends only on the group $G$ itself and is explicitly defined in terms of the indices of the permutations in $G$ (see e.g. \cite[p. 316]{Mal02}).

Then, Malle's first conjecture describes the growth of the counting function $N(K, G;x)$ by asserting that for all $\epsilon>0$ there exist positive constants $c_1(K,G)$ and $c_2(K,G;\epsilon)$ such that
\begin{align}\label{malle-conjecture-1}
c_1(K,G)x^{a(G)} \leq N(K,G;x) \leq c_2(K,G;\epsilon) x^{a(G) + \epsilon}.
\end{align}

 Furthermore, in \cite[Sections 4--5]{Mal02}, Malle studies the \textit{inductive consistency} of his conjecture \eqref{malle-conjecture-1} with the formation of direct products and wreath products of groups for which conjecture \eqref{malle-conjecture-1} holds. In particular, for wreath products, Malle proves the following result.

\begin{proposition}[\cite{Mal02}, Corollary 5.3]\label{prop-wreath-malle-conjecture-1}
Assume that the upper bound in (\ref{malle-conjecture-1}) holds for a transitive permutation group $H \leq S_{k}$, and that there exists $\delta_0>0$ such that for a fixed $\epsilon>0$ we have $c_2(L,H, \epsilon) \leq c(H,\epsilon)d_L^{\delta_0}$ for all $L/K$ with $\Gal(L^c/K) \simeq G \leq S_{\ell}$ and where $d_L = |\sN_{K/\Q}(d(L/K))|$. Assume moreover that there exists a constant $e_G$ such that $e_G < ka(H) - \delta_0$ and $$N(K,G;x) \leq c(K,G;\epsilon)x^{e_G+\epsilon}.$$ Then, the upper bound in  \eqref{malle-conjecture-1} holds for the wreath product $G\wr_{\Omega}H$, where $\Omega=\{1,\cdots, k\}$ and $H$ acts on $\Omega$ in the natural way.
\end{proposition}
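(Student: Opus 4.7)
The plan is to follow Malle's original approach: realize $(G \wr_{\Omega} H)$-extensions of $K$ through the tower structure they impose, then combine the discriminant tower formula with Abel summation over the intermediate fields.

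First, I would use the embedding framework of Theorem \ref{Main-Theorem-2} to associate to each $M/K$ with $\Gal(M^c/K) \simeq G \wr_{\Omega} H$ an intermediate field $L$ with $[L:K] = \ell$ and $\Gal(L^c/K) \simeq G$, together with an $H$-extension $M/L$ of degree $k$. The classical discriminant tower formula $d(M/K) = \sN_{L/K}(d(M/L)) \cdot d(L/K)^{k}$ becomes, after taking absolute norms to $\Q$, the identity $d_M = d_{M/L} \cdot d_L^{k}$, where $d_{M/L} := |\sN_{L/\Q}(d(M/L))|$. The condition $d_M \leq x$ is then equivalent to $d_{M/L} \leq x/d_L^{k}$, and inclusion of towers gives
\[
N(K, G \wr_{\Omega} H; x) \;\leq\; \sum_{\substack{L/K \\ \Gal(L^c/K) \simeq G}} N(L, H; x/d_L^{k}).
\]

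Second, I would apply the upper bound hypothesis for $H$ over each $L$ together with the uniformity estimate $c_2(L, H, \epsilon) \leq c(H, \epsilon)\, d_L^{\delta_0}$, to obtain
\[
N(L, H; x/d_L^{k}) \;\leq\; c(H, \epsilon)\, x^{a(H)+\epsilon}\, d_L^{\,\delta_0 - k(a(H)+\epsilon)}.
\]
Since $d_{M/L} \geq 1$ forces $d_L \leq x^{1/k}$, the problem reduces to bounding the sum $\sum_{d_L \leq x^{1/k}} d_L^{\,\delta_0 - k(a(H)+\epsilon)}$ over the relevant $G$-extensions $L/K$.

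I would then handle this sum by Abel summation, with the hypothesized bound $N(K, G; y) \leq c\, y^{e_G+\epsilon}$ playing the role of the distribution function. The hypothesis $e_G < k\, a(H) - \delta_0$ is designed precisely so that, for $\epsilon$ sufficiently small, the effective exponent $e_G + \epsilon + \delta_0 - k(a(H)+\epsilon)$ is strictly negative; this makes both the interior integral and the boundary contribution at $Y = x^{1/k}$ bounded uniformly in $x$. Consequently $\sum_{d_L \leq x^{1/k}} d_L^{\,\delta_0 - k(a(H)+\epsilon)} = O(1)$ and hence $N(K, G\wr_{\Omega} H; x) = O\bigl(x^{a(H)+\epsilon}\bigr)$. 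Combined with Malle's index computation that identifies $a(G \wr_{\Omega} H)$ in terms of $a(G)$ and $a(H)$, this yields the Malle upper bound for $G \wr_{\Omega} H$.

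The main obstacle I anticipate is the delicate coordination of parameters in the Abel summation: one must choose $\epsilon$ carefully in terms of $\delta_0$, $e_G$, $k$, and $a(H)$ so that every exponent has the desired sign, and verify that the extracted constant $c_2(K, G \wr_{\Omega} H, \epsilon)$ is genuinely independent of $x$ rather than growing with it. A subsidiary technical point is checking that the map from $(G \wr_{\Omega} H)$-extensions $M/K$ to pairs $(L, M/L)$ does not overcount in a way that would inflate the estimate; for an upper bound, any multiplicity can be absorbed into the constant without harm.
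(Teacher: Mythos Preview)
The paper does not give its own proof of this proposition; it is quoted as Corollary~5.3 of \cite{Mal02} in the survey Section~\ref{Applications}, with no argument supplied. Your proposal reconstructs Malle's original proof: realize each extension counted by $N(K,G\wr_\Omega H;x)$ via a tower $K\subset L\subset M$, use the relative discriminant formula $d_M=d_{M/L}\,d_L^{[M:L]}$ to separate the variables, bound the inner count of $H$-extensions of $L$ by the uniform hypothesis $c_2(L,H,\epsilon)\le c(H,\epsilon)\,d_L^{\delta_0}$, and control the outer sum over $L$ by partial summation against $N(K,G;y)\ll y^{e_G+\epsilon}$. The condition $e_G<k\,a(H)-\delta_0$ is exactly what makes the partial-summation integral converge, and the resulting bound $O(x^{a(H)+\epsilon})$ is then matched to $a(G\wr_\Omega H)$ via Malle's index computation. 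This is the intended argument.

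One caution on orientation. With the wreath-product convention adopted in \emph{this} paper ($G\wr_\Omega H=G^{k}\rtimes H$ with $H\le S_k$ acting on $\Omega$), the block system of a $(G\wr_\Omega H)$-extension yields an intermediate field $L$ with $[L:K]=k$ and $\Gal(L^c/K)\simeq H$, not $[L:K]=\ell$ with group $G$ as you wrote; see the Remark following the proposition. The hypotheses of the proposition, however, are phrased for exactly the tower you use (bottom $G$, top $H$), which reflects Malle's own notation in \cite{Mal02}. So the discrepancy is purely a clash of conventions between the source and the present paper, and your computation is the correct one for the hypotheses as stated. When writing it up, just make sure that the tower direction, the exponent in $d_L^{k}$, and the identity $a(G\wr_\Omega H)=a(H)$ all refer to one and the same convention.
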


\begin{remark}
Note that this deals with the upper bound in (\ref{malle-conjecture-1}). Obtaining a lower bound for $N(K,G \wr_{\Omega}H;x)$ is, unfortunately, not that clear. Let $L/K$ be an extension such that $\Gal(L^c/K) \simeq H$ for some transitive subgroup $H \leq S_k$ and
let $G \leq S_{\ell}$ be a transitive subgroup.
Malle briefly mentions in the last paragraph of page 322 in \cite{Mal02} that for any extension $M/L$ with $\Gal(M^c/L) \simeq G$, the Galois group $\Gal(M^c/K)$ can be embedded in the wreath product $$\Gal(M^c/L) \wr_{\operatorname{Hom}_{K}(L, L^c)} \Gal(L^c/K) \simeq G \wr_{\Omega} H,$$ 
where $\Omega := \{ 1, \dots, k \}$ and $H$ acts on $\Omega$ in the natural way. This is a consequence, for example, of the result of Theorem \ref{Main-Theorem-2} and Proposition \ref{Omega-wreath-isomorphism} of this paper. Then, Malle argues in \cite[pp. 322--323]{Mal02} that the lower bound in (\ref{malle-conjecture-1}) can be obtained for the counting function $N(K,G \wr_{\Omega}H;x)$ if the density of extensions $M/L$ for which $\Gal(M^c/L)$ is maximal, is positive; that is, the extensions for which the Galois group $\Gal(M^c/K)$ equals the full wreath product $G \wr_{\Omega} H$. This strategy was first carried out by J. Klüners for the wreath product $C_2 \wr_{\Omega} H$, as is explained in the two following propositions.
\end{remark}

Malle refined his first conjecture \eqref{malle-conjecture-1} in \cite{Mal04}, now providing an asymptotic estimation for the counting function. More explicitly, again for a transitive subgroup $G \leq S_n$ and a number field $K$, Malle defined another constant $b(K,G)$ that only depends on $G$ and $K$ and is defined in terms of conjugacy classes of $G$. Then, the stronger version of the Malle conjectures states that there exists a positive constant $c(K,G)$ such that
\begin{align}\label{malle-conjecture-2}
N(K,G;x) \sim c(K,G) x^{a(G)} (\log x)^{b(K,G)-1}.
\end{align}
Malle further supports this stronger form of the conjecture with heuristic arguments and experimental data. 

Moreover, regarding the topic of this paper, it is important to mention that this stronger form of the conjecture was further explored by J. Klüners for wreath products in \cite{Klu12}, particularly for wreath products of the form $C_2 \wr_{\Omega} H$, with $H$ a subgroup of $S_k$ and $\Omega = \{ 1, \dots, k \}$. For example, Klüners shows that Proposition \ref{prop-wreath-malle-conjecture-1} becomes simpler for wreath products like this, as is stated in the following result.

\begin{proposition}[\cite{Klu12}, Proposition 1]\label{prop-wreath-malle-conjecture-2}
Let $H \leq S_{k}$ be a transitive permutation group and suppose that for every $\delta > 0$, there exist positive constants $c(K, H; \delta)$ such that $N(K, H; x) \leq c(K, H; \delta) x^{1+\delta}$. Then for any $\epsilon > 0$ there exists a constant $c(K, C_2 \wr_{\Omega} H; \epsilon)$ such that
$$
N(K, C_2 \wr_{\Omega} H; x) \leq c(K, C_2 \wr_{\Omega} H; \epsilon) x^{a(C_2\wr_{\Omega}H)+\epsilon}.
$$
Here, $\Omega = \{1,2,\cdots, k\}.$
\end{proposition}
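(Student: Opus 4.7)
The plan is to combine the embedding of Corollary \ref{Quadratic-embedding} with the tower formula for discriminants and a classical linear bound on the number of quadratic extensions. Every extension $M/K$ with $\Gal(M^c/K) \simeq C_2 \wr_\Omega H$ is a degree-$2k$ extension of $K$ that sits in a tower $K \subseteq L \subseteq M$, obtained by taking $L$ to be the fixed field of a point stabilizer in the natural imprimitive action of $C_2 \wr_\Omega H$ on $\{1,\dots,k\} \times \{1,2\}$; this $L$ satisfies $[L:K]=k$, $\Gal(L^c/K) \simeq H$, and $[M:L]=2$. Conversely, Corollary \ref{Quadratic-embedding} guarantees that every tower of this shape produces a Galois group that embeds into $C_2 \wr_\Omega H$. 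Writing $\mathfrak{d}(X/Y) := |N_{Y/\Q}(d(X/Y))|$ and using the tower formula $\mathfrak{d}(M/K) = \mathfrak{d}(L/K)^{2}\,\mathfrak{d}(M/L)$, this parameterization yields
\begin{equation*}
N(K, C_2 \wr_\Omega H; x) \;\leq\; \sum_{\substack{L/K \\ \Gal(L^c/K) \simeq H \\ \mathfrak{d}(L/K) \leq \sqrt{x}}} N\!\left(L, C_2;\, x/\mathfrak{d}(L/K)^2\right).
\end{equation*}

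Next we would invoke the classical bound $N(L, C_2; y) \ll_\epsilon \mathfrak{d}(L/K)^{\epsilon}\,y$ for every $\epsilon > 0$, which follows from the Kummer-theoretic parameterization of quadratic extensions by squarefree ideals of $\mathcal{O}_L$, with the polynomial-in-discriminant factor absorbing the contributions of the $2$-torsion of the class group and unit group of $L$ (for instance via the Datskovsky--Wright formulas for counting abelian extensions). Substituting gives $N(K, C_2 \wr_\Omega H; x) \ll_\epsilon x \sum_L \mathfrak{d}(L/K)^{\epsilon - 2}$, where the sum is restricted as above. To this we would apply Abel summation using the hypothesis $N(K, H; t) \ll_\delta t^{1+\delta}$: choosing $\delta + \epsilon < 1$, the full Dirichlet series $\sum_L \mathfrak{d}(L/K)^{\epsilon - 2}$ converges to a finite constant and the truncation tail contributes only an $O\!\bigl(x^{(\delta+\epsilon-1)/2}\bigr)$ correction, so we obtain $N(K, C_2 \wr_\Omega H; x) \ll_\epsilon x^{1+\epsilon}$. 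The proof would conclude by verifying that $a(C_2 \wr_\Omega H) = 1$: in the degree-$2k$ imprimitive permutation representation, a non-trivial element supported in a single $C_2$-factor acts as a single transposition and hence has index $1$, which is minimal, so Malle's constant equals $1/1 = 1$ and the exponent on the right-hand side is indeed $a(C_2 \wr_\Omega H) + \epsilon$.

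The main obstacle we anticipate is controlling the $L$-dependence of the inner count $N(L, C_2; y)$ with a small enough power of $\mathfrak{d}(L/K)$ that the subsequent Abel summation still delivers the sharp exponent $1 + \epsilon$ rather than $1 + c\epsilon$ for some $c > 1$; this requires a careful treatment of the regulator and class number factors, possibly leveraging Brauer--Siegel-type bounds to keep everything polynomial in the discriminant. Once that uniformity is secured, the rest of the argument is a routine tower computation, with the embedding results of Corollary \ref{Quadratic-embedding} (or equivalently Theorem \ref{Main-Theorem-2} in the quadratic case) playing the crucial role of reducing the count for $C_2 \wr_\Omega H$-extensions to a count over pairs $(L, M)$ with $[L:K] = k$, $\Gal(L^c/K) \simeq H$, and $M/L$ quadratic.
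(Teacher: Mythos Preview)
The paper does not give its own proof of this proposition: it is stated in the survey section on applications with an explicit attribution to Kl\"uners \cite[Proposition~1]{Klu12}, and no argument is supplied. So there is no ``paper's own proof'' to compare against.

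That said, your sketch is essentially the standard strategy for this kind of result and is in the spirit of Kl\"uners' approach (and of Malle's earlier Corollary~5.3, which this proposition specializes). The tower parameterization via Corollary~\ref{Quadratic-embedding}, the discriminant tower formula $\mathfrak{d}(M/K)=\mathfrak{d}(L/K)^2\,\mathfrak{d}(M/L)$, the linear bound on quadratic extensions of $L$ with polynomial dependence on $\mathfrak{d}(L/K)$, and the partial-summation step using the hypothesis on $N(K,H;\cdot)$ are exactly the right ingredients. Your computation that $a(C_2\wr_\Omega H)=1$ is also correct: a single sign change in the base is a transposition in the degree-$2k$ representation and has index~$1$. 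The one place where you would need to be precise, as you note, is making the implicit constant in $N(L,C_2;y)\ll y$ depend only polynomially on $\mathfrak{d}(L/K)$; this is standard (via bounds on $|\mathrm{Cl}(L)[2]|$ and the unit rank) but must be stated carefully so that the resulting extra $\epsilon$ can be absorbed. With that in hand, your outline would constitute a complete proof.
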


Furthermore, Klüners obtains conditions for conjecture (\ref{malle-conjecture-2}) to hold for wreath products. This is stated in the following result.

\begin{proposition}[\cite{Klu12}, Corollary 5 and Theorem 6]
Assume $H \leq S_{k}$ satisfies that
$$
N(K,H;x) = O_{K,H,\epsilon}(X^{1+\epsilon}),
$$
that is, that for all $\epsilon>0$ there exists a constant $c(K,H;\epsilon)$ such that $N(K,H;x) \leq c(K,H;\epsilon) x^{1+\epsilon}$ for large enough $x$. Then conjecture (\ref{malle-conjecture-2}) is true for all groups of the form $G = C_2 \wr_{\Omega} H$, where $\Omega=\{1,2,\cdots,k\}$.
\end{proposition}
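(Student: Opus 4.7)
The plan is to parameterize $G$-extensions $M/K$ via the quadratic embedding given in Corollary \ref{Quadratic-embedding} and then assemble a Dirichlet series analysis from the hypothesis on $N(K, H; x)$. By that corollary, every $M/K$ with $\Gal(M^c/K) \simeq G = C_2 \wr_{\Omega} H$ arises from a tower $K \subseteq L \subseteq M$ in which $L/K$ satisfies $\Gal(L^c/K) \simeq H$, $[L : K] = k$, and $M = L(\sqrt{\alpha})$ for some class $[\alpha] \in L^{\times}/L^{\times 2}$. The tower formula for discriminants gives $|d(M/K)| = |\mathrm{N}_{L/K}(\mathfrak{d}(M/L))| \cdot |d(L/K)|^{2}$, and for the quadratic extension $M/L$ the relative discriminant $\mathfrak{d}(M/L)$ can be described in terms of the squarefree part of the principal ideal $(\alpha) \subseteq \mathcal{O}_L$, up to primes above $2$.

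Next, I would encode $N(K, G; x)$ through the Dirichlet series $\Phi_G(s) := \sum_{M} |\sN_{K/\Q}(d(M/K))|^{-s}$ and separate the tower as
\begin{align*}
\Phi_G(s) \;=\; \sum_{\substack{L/K \\ \Gal(L^c/K) \simeq H}} |\sN_{K/\Q}(d(L/K))|^{-2s} \, \Psi_L(s),
\end{align*}
where $\Psi_L(s)$ is a generating series over quadratic extensions $M = L(\sqrt{\alpha})$ of $L$ subject to the additional requirement that $\Gal(M^c/K) \simeq G$. Using class field theory over $L$ (in the form of Kummer duality applied to $L^{\times}/L^{\times 2}$), the factor $\Psi_L(s)$ can be expressed in terms of a Dirichlet series whose rightmost singularity lies at $s = a(G)$, with multiplicity encoding the exponent $b(K, G) - 1$ of $\log x$.

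Third, I would invoke the hypothesis $N(K, H; x) = O_{\epsilon}(x^{1 + \epsilon})$ together with partial summation to show that the outer sum over $L$ converges absolutely for $\mathrm{Re}(s)$ slightly larger than $a(G)$, and that the singular behavior at $s = a(G)$ propagates to $\Phi_G(s)$ with the expected order. A standard Tauberian theorem of Wiener-Ikehara type then extracts the asymptotic $N(K, G; x) \sim c(K, G) \, x^{a(G)} (\log x)^{b(K, G) - 1}$ required by (\ref{malle-conjecture-2}).

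The main obstacle is the Galois group condition. Not every pair $(L, \alpha)$ with $L/K$ an $H$-extension yields $\Gal(M^c/K) \simeq G$ exactly; some pairs realize proper transitive subgroups of $C_2 \wr_{\Omega} H$. Handling this requires a M\"obius-type inclusion-exclusion over the lattice of transitive subgroups $G' \leq G$, controlling the contribution of each $G'$ uniformly. The bound $N(K, H; x) \ll_{\epsilon} x^{1 + \epsilon}$ is precisely what is needed to absorb these non-generic contributions into admissible error terms and, symmetrically, to show that a positive-density subset of pairs $(L, \alpha)$ attains the full wreath product, which furnishes the lower bound matching (\ref{malle-conjecture-2}).
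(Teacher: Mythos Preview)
The paper does not prove this proposition at all. It appears in Section~\ref{Applications}, which is an expository survey of applications; the proposition is simply quoted from Kl\"uners \cite[Corollary 5 and Theorem 6]{Klu12} without proof. So there is no argument in the paper against which to compare your attempt.

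As a separate matter, your sketch is in the right spirit for results of this type---parametrizing $C_2 \wr_\Omega H$-extensions via towers $K \subseteq L \subseteq M$ with $M/L$ quadratic, building a Dirichlet series, and applying a Tauberian argument---and indeed Kl\"uners' actual proof in \cite{Klu12} proceeds along broadly similar lines. But several steps you describe are substantial undertakings rather than routine verifications: controlling the inner sum $\Psi_L(s)$ uniformly in $L$, identifying the exact location and order of the rightmost singularity so as to recover the correct constants $a(G)$ and $b(K,G)$, and carrying out the inclusion--exclusion over subgroups with adequate error bounds all require real work that your outline does not supply. In particular, the assertion that the hypothesis $N(K,H;x) \ll_\epsilon x^{1+\epsilon}$ is ``precisely what is needed'' to absorb the subgroup contributions and to guarantee a positive density of maximal extensions is the heart of the matter, and you have not indicated how this is established. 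What you have written is a plausible roadmap, not a proof.
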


The two decades that have passed since the formulation of the Malle conjectures have seen a continued stream of work towards them. In particular, it is important to point out that Klüners \cite{Klu05} showed that the predicted value of the constant $b(K, G)$ in the strong form of the Malle conjectures \eqref{malle-conjecture-2} is not always correct. Later on S. Türkelli \cite{Tur15} proposed a correction for the exponent of the logarithm in \eqref{malle-conjecture-2}. The interested reader can consult for example the introduction of \cite{Alb21} for a short survey of some of the more relevant advances in this area. Moreover, in that paper, Alberts proposes a refinement of the Malle conjectures and compares it to the correction predicted by Türkelli.

\subsection{Application to the distribution of CM fields and the Colmez conjecture}

In this application, we briefly describe a conjecture of P. Colmez that relates a height function defined on abelian varieties to certain sums of logarithmic derivatives of Artin $L$-functions evaluated at 0. In particular, we describe some recent advances on the conjecture that make use of the embedding of Galois groups into wreath products. We will start by describing the conjecture in the following paragraphs.

A \textit{CM field} $E$ of degree $2n$ is a number field such that $E$ is a totally imaginary quadratic extension of a totally real number field $F$ of degree $n$. In particular, this means that there exists a totally negative element $\Delta \ll 0$ in $F$ such that $E = F(\sqrt{\Delta})$ (see e.g. \cite[Proposition 1.4]{Mil20}). Basic examples of CM fields are given by the cyclotomic fields $E_n = \Q(\zeta_n)$, where $\zeta_n = e^{2\pi i / n}$ is a primitive $n$-th root of unity. In this case, the maximal totally real subfield of $E_n$ is $F_n = \Q(\zeta_n + \zeta_n^{-1}) = \Q(\cos{(2\pi /n)})$. In the quadratic case, the CM fields of degree 2 are just the imaginary quadratic fields $\Q(\sqrt{-d})$, where $d > 0$ is a square-free integer, and the maximal totally real subfield is $\Q$. For a $CM$ field $E$ of degree $2n$, the complex embeddings $E \hooklongrightarrow \C$ come in complex conjugate pairs. Thus, the embeddings can be numbered as $\sigma_1, \overline{\sigma_1}, \dots, \sigma_n, \overline{\sigma_n}$. Then, a \textit{CM type} for $E$ is a set of $n$ embeddings containing exactly one embedding from each complex conjugate pair. Since there are two choices from each pair, the set of CM types for $E$, denoted by $\Phi(E)$, has exactly $2^n$ CM types.

These fields appear naturally in the theory of complex multiplication for abelian varieties and this is the reason for the terminology CM. The theory of complex multiplication was described by D. Hilbert as \textit{``not only the most beautiful part of mathematics but also of the whole of science.''} (see e.g. \cite{Taussky43}). The interested reader can consult for example \cite[Chapter 6]{ST15} and \cite{Cox22} for a very readable treatment of the basic theory of complex multiplication for elliptic curves, or \cite[Chapter 2]{Sil94} for a more advanced and complete treatment, and ultimately \cite{Mil20} or \cite{Lan83} for a more general treatment of the theory for abelian varieties.

An abelian variety $A$ of dimension $n$, defined over a number field $K$, is said to have \textit{complex multiplication}, or to be a \textit{CM abelian variety}, if its endomorphism algebra $\operatorname{End}^0(A) = \operatorname{End}(A) \otimes \Q$ contains a commutative subring of dimension $2n$ over $\Q$ (see e.g. \cite[Proposition 3.3]{Mil20}). Moreover, let $F$ be a totally real number field of degree $n$ and let $E$ be a CM field with maximal totally real subfield $F$ and let $\Phi$ a CM type for $E$. Then, we say that an abelian variety $A$ is of type $(\mathcal{O}_E,\Phi)$ if it admits complex multiplication by $\mathcal{O}_E$, in the sense that $[E : \Q] = 2 \dim{(A)}$ and there is a ring homomorphism $\mathcal{O}_E \hooklongrightarrow \operatorname{End}(A)$.

Thus let $X_{\Phi}$ is said to be a CM abelian variety defined over $\overline{\Q}$ of type $(\mathcal{O}_E,\Phi)$. Also, let $L \subseteq \overline{\mathbb Q}$ be a number field over which $X_{\Phi}$ has everywhere good reduction 
and choose a differential $\omega \in H^{0}(X_{\Phi}, \Omega^n_{X_{\Phi}})$. Then the \textit{Faltings height} of $X_{\Phi}$ is defined by
\begin{align*}
h_{\textrm{Fal}}(X_{\Phi}):= -\frac{1}{2[L:\mathbb Q]}\sum_{\sigma : L \hookrightarrow \mathbb{C}}
\log\left| \int_{X_{\Phi}^{\sigma}(\mathbb{C})} \omega^{\sigma} \wedge \overline{\omega^{\sigma}}\right|.
\end{align*}
It is known that the Faltings height $h_{\textrm{Fal}}(X_{\Phi})$ does not depend on the choice of $L, \omega$, or $X_{\Phi}$. In particular, by a result of Colmez \cite[Théor\`{e}me 0.3 (ii)]{Col93}, the Faltings height depends only on the choice of CM type $\Phi$ and not on the choice of CM abelian variety $X_{\Phi}$. This means that if $X_{\Phi}$ and $Y_{\Phi}$ are CM abelian varieties of type $(\mathcal{O}_E, \Phi)$, then $h_{\textrm{Fal}}(X_{\Phi})=h_{\textrm{Fal}}(Y_{\Phi})$. Hence it is often denoted just by $h_{\textrm{Fal}}(\Phi)$. We observe that in the literature there are different normalizations of the Faltings height and so some versions will differ by the addition of a constant.

Let $\Q^{\CM}$ be the compositum of all CM fields and $G^{\CM}:= \Gal(\Q^{\CM} / \Q)$. Given an irreducible Artin character $\chi_{\pi}$ of $G^{\CM}$ we let $L(\chi_{\pi},s)$ be the (incomplete) Artin $L$--function of $\chi_{\pi}$ and $\mathfrak{f}_{\chi_{\pi}}$ be the analytic Artin conductor of $\chi_{\pi}$. Then, Colmez \cite[Conjecture 0.4]{Col93} made the following conjecture (see also \cite{Yan10}).

\begin{conjecture}\label{Colmez-conjecture} 
Let $E$ be a CM field. Then if $\Phi$ is any CM type for $E$ and
$X_{\Phi}$ any CM abelian variety of type $(\mathcal{O}_E, \Phi)$, we have
\begin{align*}
h_{\mathrm{Fal}}(X_{\Phi}) = \sum_{\chi_{\pi}} c_{\Phi, \chi_{\pi}} \left( -\frac{L'(\chi_{\pi}, 0)}{L(\chi_{\pi}, 0)} - \frac{1}{2} \log{(\mathfrak{f}_{\chi_{\pi}})} \right),
\end{align*}
where the sum is over all the irreducible Artin characters of $G^{\CM}$, the coefficients $c_{\Phi, \chi_{\pi}}$ can be explicitly described and only a finite number of them are nonzero.
\end{conjecture}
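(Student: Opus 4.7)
My plan is to interpret both sides of the conjectural identity as invariants of arithmetic intersection theory on a suitable Shimura variety and then to decompose them simultaneously along the irreducible Artin characters $\chi_{\pi}$ of $G^{\CM}$ that enter the formula. On the geometric side, the Faltings height $h_{\mathrm{Fal}}(X_{\Phi})$ is the archimedean contribution to the arithmetic degree of the metrized Hodge bundle $\widehat{\omega}$ restricted to the CM $0$-cycle $Z_{\Phi}$ attached to $(\mathcal{O}_E,\Phi)$ on a unitary Shimura variety $\mathrm{Sh}_{U(\Phi)}$ whose signature at the archimedean places of $F$ is dictated by $\Phi$. On the analytic side, the summand $-L'(\chi_{\pi},0)/L(\chi_{\pi},0) - \tfrac{1}{2}\log \mathfrak{f}_{\chi_{\pi}}$ is precisely the logarithmic derivative at $s=0$ of a completed Artin $L$-factor of exactly the shape produced by differentiating the constant term of an incoherent Eisenstein series on a quasi-split unitary group.

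First, I would invoke an arithmetic Siegel--Weil identity in the style of Kudla--Rapoport to rewrite $\widehat{\deg}\,\widehat{\omega}\big|_{Z_{\Phi}}$ as a sum of central derivatives of Fourier coefficients of such an Eisenstein series, and then perform a Rankin--Selberg unfolding that produces an expansion indexed by the irreducible constituents $\chi_{\pi}$ of the regular representation of $G^{\CM}$. Second, I would identify the spectral coefficients with the explicit rational numbers $c_{\Phi,\chi_{\pi}}$ via Brauer induction, reducing each character to an abelian case in which Colmez's own proof using the Chowla--Selberg formula, together with Obus's refinement for non-normal abelian CM fields, evaluates the identity exactly. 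Third, to see that the pointwise identity holds for the specific type $\Phi$ rather than only on average over the $2^n$ elements of $\Phi(E)$, I would use the wreath-product embedding $\Gal(E^c/\Q) \hookrightarrow \Gal(E/F) \wr_{\Omega} \Gal(F^c/\Q)$ provided by Corollary \ref{Quadratic-embedding} to identify $\Phi(E)$ with the base group $C_2^{\Omega}$ of this wreath product; a choice of $\Phi$ then corresponds to a character of $C_2^{\Omega}$, and projecting the arithmetic Siegel--Weil identity against this character should, in principle, isolate the $\Phi$-component of the right-hand side.

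The decisive obstacle is precisely this last projection. The theorems of Andreatta--Goren--Howard--Madapusi Pera and of Yuan--Zhang establish the arithmetic Siegel--Weil identity in a form that controls only the average $\frac{1}{2^n}\sum_{\Phi \in \Phi(E)} h_{\mathrm{Fal}}(\Phi)$, and the passage from this average to the pointwise value $h_{\mathrm{Fal}}(\Phi)$ at a single non-abelian $\Phi$ requires either an orthogonality statement that genuinely separates the contributions of the $2^n$ characters of $C_2^{\Omega}$ inside the geometric decomposition of $Z_{\Phi}$, or the construction of a $\Phi$-adapted Shimura sub-datum whose CM cycle sees only $\Phi$. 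Neither ingredient is available outside the abelian case, and I expect producing such a pointwise geometric witness for an individual non-abelian CM type to be the principal technical difficulty of the proof, which is exactly why Conjecture \ref{Colmez-conjecture} remains open in general.
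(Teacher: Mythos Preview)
The statement you are attempting to prove is labelled a \emph{Conjecture} in the paper, and the paper does not offer a proof of it. It is presented purely as background in the applications section: the paper records that Colmez proved the abelian case (up to a $\log 2$ ambiguity later removed by Obus), that Yuan--Zhang and Andreatta--Goren--Howard--Madapusi~Pera proved an averaged version, and that the full pointwise statement for non-abelian CM fields remains open. There is therefore nothing in the paper to compare your proposal against.

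Your write-up is not a proof but a programme sketch, and to your credit you say so explicitly in the final paragraph: you correctly isolate the gap between the averaged identity and the pointwise one for an individual CM type $\Phi$ as the essential obstruction, and you note that no known geometric or spectral mechanism separates the $2^n$ characters of $C_2^{\Omega}$ in the non-abelian case. That diagnosis is accurate and matches the current state of the literature. But precisely because the obstruction you name is genuine and unresolved, the earlier steps of your outline (the arithmetic Siegel--Weil input, the Rankin--Selberg unfolding, the Brauer-induction reduction) do not combine into a proof; they reproduce, at best, the averaged theorem that is already known. In short: there is no error to correct here other than the framing --- this is a conjecture, the paper treats it as such, and your own concluding paragraph already concedes that your proposal does not close it.
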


Originally, Colmez \cite[Théor\`{e}me 0.5]{Col93} was able to prove that his conjecture is true for any \textit{abelian} CM field $E$, up to an unknown rational multiple of $\log{2}$, which was removed in later work by A. Obus \cite{Obu13}. Thus, the conjecture only remains open in general for \textit{non-abelian} CM fields. Interestingly, in this direction, and certainly sparked by the very important work of T. Yang \cite{Yan10} and \cite{Yan13}, the last decade has seen a great amount of work on the \textit{non-abelian} case of this conjecture. In particular, the biggest advance was a proof of an averaged version of the conjecture by S. Yuan and S. Zhang \cite{YZ18} and independently by F. Andreatta, E. Goren, B. Howard and K. Madapusi-Pera \cite{AGHM18}.

Now, given a CM field $E$ of degree $2n$ with maximal totally real subfield $F$, we let $\Phi(E)$ be the set of CM types of $E$. It turns out that the Galois group $\Gal(E^c/\Q)$ (or equivalently $G^{\mathcal{CM}}$) acts on $\Phi(E)$ by composition of maps (see e.g \cite[Definition 2.1]{BSM18}). Then, using the proved averaged version of the Colmez conjecture as the basis, the first author and R. Masri showed that if this action is transitive, then Conjecture \ref{Colmez-conjecture} is true for the CM field $E$ (see \cite[Proposition 5.1]{BSM18}). In that same paper, using this result as a stepping stone, an approach to study the Colmez conjecture from an arithmetic statistics perspective was initiated (see \cite[\S 1.2]{BSM18}). Later on, that approach was carried out by the first author, R. Masri and F. Thorne in \cite{BSMT17}.

More precisely, in the latter paper, the authors defined the notion of a CM field with maximal Galois group with regards to an embedding into a wreath product. Specifically, according to Theorem \ref{Barquero-Sanchez-Masri-Thorne-Theorem}, if $G \leq S_n$ is a transitive subgroup with $\Gal(F^c/\Q) \simeq G$, then the Galois group $\Gal(E^c/\Q)$ is isomorphic to a subgroup of the wreath product $C_2 \wr_{\Omega} G$, where $\Omega = \{ 1, 2, \dots, n \}$ and the action of $G$ on $C_2^{\Omega} = C_2^n$ is by permuting the components. Thus the CM field $E$ is said to be a \textit{$G$-Weyl CM field} if $\Gal(E^c/\Q)$ is maximal, i.e., if $\Gal(E^c/\Q) \simeq C_2 \wr_{\Omega} G$. Using this notion, the authors proved in \cite[Theorem 1.14]{BSMT17} that if $E$ is a $G$-Weyl CM field, then the Colmez conjecture \ref{Colmez-conjecture} is true for $E$. 

Now, for any $X>0$ let $N_{2n}^{\text{Weyl}}(X, G)$ count the number of $G$-Weyl CM fields $E$ of degree $2n$ with discriminant $|d_E| \leq X$ and let 
$$
N_{2n}^{\text{Weyl}}(X) := \sum_{G \leq S_n} N_{2n}^{\text{Weyl}}(X, G).
$$
Hence $N_{2n}^{\text{Weyl}}(X)$ counts the total number of $G$-Weyl CM fields of degree $2n$ with $|d_E| \leq X$ for all the possible transitive subgroups $G$ of $S_n$. Moreover, we let $N_{2n}^{\text{cm}}(X)$ count the total number of CM fields $E$ of degree $2n$ with $|d_E| \leq X$. 

Then, assuming a weak form of the upper bound in the Malle conjecture (\ref{malle-conjecture-1}), which is known to be true in many cases (see e.g \cite[p. 5, Remark 1.8 and \S 4]{BSMT17}), it was proved in that paper \cite[Theorem 1.11]{BSMT17} that there exist positive constants $C(n, G)$, $\alpha(n, G)$ and $\beta(n)$ such that
$$
\frac{N_{2n}^{\text{Weyl}}(X, G)}{N_{2n}^{\text{cm}}(X)} = C(n, G) + O(X^{-\alpha(n, G)})
$$
and
$$
\frac{N_{2n}^{\text{Weyl}}(X)}{N_{2n}^{\text{cm}}(X)} = 1 + O(X^{-\beta(n)}).
$$
This last result shows that for a fixed $n\geq 1$, out of all the possible CM fields of degree $2n$, it happens that 100\% of them are of Weyl type. Therefore, when combined with the aforementioned results about the Colmez conjecture, this asymptotics imply that, conditional on the weak version of the upper bound in the Malle conjecture \eqref{malle-conjecture-1}, the Colmez conjecture holds true for 100\% of CM fields of degree $2n$ for any $n \geq 1$. For more details about this the reader is referred to \cite[\S 1.2]{BSMT17}.

\subsection{Application in arithmetic geometry}

This subsection describes the application that is by its nature the more demanding of the reader in terms of background. Since we only want to provide an indication of the usefulness of the embedding theorems of the paper, we won't go into the full details of the definitions of the concepts mentioned here and instead we refer the interested reader to the original paper of C-L. Chai and F. Oort \cite{CO12}, and to the excellent notes of J. S. Milne on Complex Multiplication \cite{Mil20} and on Shimura Varieties \cite{Mil17}.

To begin with, in \cite{CO12}, the authors study the question of \textit{whether there exists an abelian variety $A$, defined over the field $\overline{\Q}$ of all algebraic numbers, that is not isogenous to the Jacobian of a stable algebraic curve over $\overline{\Q}$}.

Now, let $\mathcal{A}_{g, 1}$ be the moduli space of principally polarized abelian varieties. A subset of $\mathcal{A}_{g, 1}$ is called a \textit{special subset} if it is a finite union of \textit{Shimura varieties}. Also, for a point $x = [(A, \lambda)] \in \mathcal{A}_{g, 1}(\overline{\Q})$ with $A$ an abelian variety with a principal polarization $\lambda$ over $\overline{\Q}$, the \textit{isogeny orbit} $\mathcal{I}(x)$ of $x$ in $\mathcal{A}_{g, 1}$ is defined to be the set of all points $y = [(B, \nu)] \in \mathcal{A}_{g, 1}(\overline{\Q})$ such that the abelian variety $B$ is isogenous to $A$.

In their paper, Chai and Oort reduce their problem to proving that \textit{for any special subset $Y \subsetneq \mathcal{A}_{g, 1}$ over $\overline{\Q}$, there exists a CM-point $y \in \mathcal{A}_{g, 1}(\overline{\Q})$ such that the isogeny orbit $\mathcal{I}(y)$ of $y$ and the special subset $Y$ are disjoint}.

In order to do this, they define a notion of CM-points of sufficient generality, which, as they explain, are expected to have density one among all the CM-points in $\mathcal{A}_{g, 1}$ under any reasonable counting procedure (see \cite[Footnote 2]{CO12}).

Then, for the sake of defining those special CM-points, they first prove Theorem \ref{Chai-Oort-Theorem} (see \cite[Lemma 2.8]{CO12}), that is, that if $K$ is a number field of degree $g = [K : \Q]$ and $L$ is a quadratic extension of $K$, then the Galois group $\Gal(L^c/\Q)$ embeds as a subgroup of the wreath product $C_2 \wr_{\Omega} S_g$, where $\Omega = \{ 1, \dots, g \}$. Using this, they define a \textit{Weyl CM field} of degree $2g$ as a CM field $L$ with maximal totally real subfield $K$ of degree $g$, such that the Galois group $\Gal(L^c/K)$ is maximal, i.e., such that $\Gal(L^c/\Q) \simeq C_2 \wr_{\Omega} S_g$. Moreover, with this, Chai and Oort define a \textit{Weyl CM point} to be a point $[(A, \lambda)] \in \mathcal{A}_{g, 1}(\overline{\Q})$ satisfying that the endomorphism algebra $\operatorname{End}^0(A) = L$, where $L$ is a Weyl CM field of degree $2g$ (see \cite[Definition 2.11]{CO12}).

Finally, as Chai and Oort explain, they use the notion of a Weyl CM point to prove that an irreducible Shimura subvariety $S \subsetneq \mathcal{A}_{g, 1}$ of positive dimension that contains a Weyl CM point $[(A, \lambda)]$ with endomorphism algebra $\operatorname{End}^{0}(A) = L$ as above, is actually a Hilbert modular variety associated to the maximal totally real subfield of $L$ (see \cite[p. 3 and Lemma 3.5]{CO12}).

\section{Wreath products of groups}\label{Wreath-section}

In this section we describe the wreath product construction on groups. This is defined in terms of semidirect products so we begin by recalling the definition of a semidirect product.

\subsection{Semidirect products}

Let $K$ and $H$ be groups and let $\theta \colon H \longrightarrow \op{Aut}(K)$ be a group homomorphism, where for every $h \in H$ we denote $\theta(h)$ by $\theta_h$ in order to simplify the notation. Then, then \textit{(outer) semidirect product} of $K$ and $H$ with respect to $\theta$, denoted $K \rtimes_{\theta} H$, is the group whose underlying set is the cartesian product $H \times K$, with group operation given by 
$$
(k_1, h_1)(k_2, h_2) := (k_1 \theta_{h_1}(k_2), h_1 h_2)
$$
for arbitrary $k_1, k_2 \in K$ and $h_1, h_2 \in H$. The identity element in $K \rtimes_{\theta} H$ is $(e_K, e_H)$ and the inverse of an element $(k, h) \in K \rtimes_{\theta} H$ is given by $(k, h)^{-1} = (\theta_{h^{-1}}(k^{-1}), h^{-1})$.

\subsection{Wreath products}

Let $\Omega$ be an arbitrary set and let $K$ and $H$ be groups. The set $K^{\Omega}$ of all functions $f \colon \Omega \to K$ forms a group under the binary operation of pointwise multiplication of functions, i.e., if $f, g \in K^{\Omega}$, then their product is the function $fg \colon \Omega \to K$ defined by $(fg)(\omega) = f(\omega) g(\omega)$ for every $\omega \in \Omega$.

Suppose that $H$ acts on the left on $\Omega$, i.e., that there's an action $\cdot \colon H \times \Omega \to \Omega$ such that
\begin{itemize}
	\item[(i)] $h_1 \cdot (h_2 \cdot \omega) = (h_1h_2) \cdot \omega$ for every $h_1, h_2 \in H$ and every $\omega \in \Omega$; and
	\item[(ii)] $e_H \cdot \omega = \omega$ for every $\omega \in \Omega$.
\end{itemize}

From this action we define a homomorphism in the next proposition.

\begin{proposition}\label{theta-homomorphism}
There is a homomorphism of groups
\begin{align*}
	\theta \colon &H \longrightarrow \op{Aut}(K^{\Omega})\\
	& h \longmapsto \theta_h
\end{align*}
where $\theta_h \colon K^{\Omega} \to K^{\Omega}$ is an automorphism, defined by $\theta_h(f)(\omega) := f(h^{-1}\cdot \omega)$ for every $f \in K^{\Omega}$ and every $\omega \in \Omega$. 
\end{proposition}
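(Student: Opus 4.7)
The plan is to verify, in order, three routine statements: (a) that for each fixed $h \in H$ the map $\theta_h \colon K^\Omega \to K^\Omega$ is a homomorphism of groups, (b) that each $\theta_h$ is bijective, hence lies in $\op{Aut}(K^\Omega)$, and (c) that the assignment $h \mapsto \theta_h$ respects the group operations of $H$ and $\op{Aut}(K^\Omega)$. All three steps reduce to direct pointwise computations on functions in $K^\Omega$, and the only delicate conceptual point is that the occurrence of $h^{-1}$ (rather than $h$) in the definition $\theta_h(f)(\omega) := f(h^{-1}\cdot\omega)$ is precisely what converts the left $H$-action on $\Omega$ into an honest group homomorphism into $\op{Aut}(K^\Omega)$, as opposed to an antihomomorphism.

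For step (a), I would fix $h \in H$ and arbitrary $f, g \in K^\Omega$, and compute at each $\omega \in \Omega$:
$$
\theta_h(fg)(\omega) = (fg)(h^{-1}\cdot \omega) = f(h^{-1}\cdot \omega)\, g(h^{-1}\cdot \omega) = \theta_h(f)(\omega)\, \theta_h(g)(\omega),
$$
using only the pointwise definition of the product in $K^\Omega$. Hence $\theta_h(fg) = \theta_h(f)\,\theta_h(g)$.

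For step (b), I would produce a two-sided inverse by showing that $\theta_h \circ \theta_{h^{-1}} = \op{id}_{K^\Omega}$ and symmetrically $\theta_{h^{-1}} \circ \theta_h = \op{id}_{K^\Omega}$. Concretely,
$$
\theta_h\bigl(\theta_{h^{-1}}(f)\bigr)(\omega) = \theta_{h^{-1}}(f)(h^{-1}\cdot\omega) = f\bigl(h\cdot(h^{-1}\cdot\omega)\bigr) = f\bigl((hh^{-1})\cdot\omega\bigr) = f(\omega),
$$
where the third equality uses axiom (i) of the action and the fourth uses axiom (ii). The other composition is analogous. Together with (a), this shows $\theta_h \in \op{Aut}(K^\Omega)$.

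Finally, for step (c), I would check multiplicativity of $\theta$ by the analogous pointwise calculation. For $h_1, h_2 \in H$ and $f \in K^\Omega$,
$$
\theta_{h_1 h_2}(f)(\omega) = f\bigl((h_1 h_2)^{-1}\cdot \omega\bigr) = f\bigl(h_2^{-1}\cdot(h_1^{-1}\cdot\omega)\bigr) = \theta_{h_2}(f)(h_1^{-1}\cdot\omega) = \theta_{h_1}\bigl(\theta_{h_2}(f)\bigr)(\omega),
$$
so $\theta_{h_1 h_2} = \theta_{h_1}\circ\theta_{h_2}$, and $\theta_{e_H}(f)(\omega) = f(e_H\cdot\omega) = f(\omega)$ gives $\theta_{e_H} = \op{id}_{K^\Omega}$. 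There is no genuine obstacle here; the proof is essentially bookkeeping, with the subtle point being the placement of the inverse in the definition, which step (c) makes transparent.
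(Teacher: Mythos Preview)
Your proof is correct and follows essentially the same approach as the paper: verify that each $\theta_h$ is a group homomorphism, that it is bijective, and then that $h \mapsto \theta_h$ is multiplicative. The only minor difference is in the bijectivity step: the paper checks injectivity and surjectivity of $\theta_h$ separately (for surjectivity it explicitly constructs a preimage $f(\omega) := g(h\cdot\omega)$), whereas you more economically exhibit $\theta_{h^{-1}}$ as a two-sided inverse, which is really just a special case of your step~(c).
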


\begin{proof}
First, we show that $\theta_h \in \op{Aut}(K^{\Omega})$. Let $f, g \in K^{\Omega}$. Then certainly $\theta_h(f) \in K^{\Omega}$ by definition. Moreover, $\theta_h$ is a group homomorphism because if $\omega \in \Omega$, then 
\begin{align*}
	\theta_h(fg)(\omega) = (fg)(h^{-1}\cdot \omega) = f(h^{-1} \cdot \omega) g(h^{-1}\cdot \omega) = \big(\theta_h(f)(\omega) \big) \big( \theta_h(g)(\omega) \big).
\end{align*}
Thus $\theta_h(fg) = \theta_h(f) \theta_h(g)$ so $\theta_h$ is a homomorphism. Now, we want to check that $\theta_h$ is a bijective function.

\noindent \textit{Injectivity of $\theta_h$:} Suppose that $\theta_h(f) = \theta_h(g)$ and let $\omega \in \Omega$. Then note that this implies that
\begin{align*}
	&\quad \theta_h(f)(h \cdot \omega) = \theta_h(g)(h \cdot \omega)\\
	\implies & f(h^{-1}\cdot(h\cdot \omega)) = g(h^{-1} \cdot (h \cdot \omega))\\
	\implies &  f((h^{-1} h )\cdot \omega) = g((h^{-1}h) \cdot \omega)\\
	\implies & f(e_H \cdot \omega) = g(e_H \cdot \omega)\\
	\implies & f(\omega) = g(\omega).
\end{align*}
Thus we conclude that $f = g$, so that $\theta_h$ is injective.

\noindent \textit{Surjectivity of $\theta_h$:} Let $g \in K^{\Omega}$. Define a function $f \colon \Omega \to K$ by $f(\omega) := g(h \cdot \omega)$ for every $\omega \in \Omega$. Then observe that $\theta_h(f)(\omega) = f(h^{-1}\cdot \omega) = g(h \cdot (h^{-1} \cdot \omega)) = g((h \cdot h^{-1}) \cdot \omega) = g(e_H \cdot \omega) = g(\omega)$. In other words, this shows that $\theta_h(f) = g$, so $\theta_h$ is surjective.

This completes the proof that $\theta_h \in \op{Aut}(K^{\Omega})$ for every $h \in H$. 

\noindent \textit{$\theta$ is a group homomorphism:} Now we must show that the map $\theta \colon H \to \op{Aut}(K^{\Omega})$ is a group homomorphism. Thus let $h_1, h_2 \in H$. Then we want to prove that $\theta(h_1 h_2) = \theta(h_1) \circ \theta(h_2)$, that is, that $\theta_{h_1h_2} = \theta_{h_1} \circ \theta_{h_2}$, where $\circ$ denotes the operation of composition of functions, which is the binary operation in $\op{Aut}(K^{\Omega})$. To see this, again let $f \in K^{\Omega}$ and let $\omega \in \Omega$. Then 
\begin{align*}
	\theta_{h_1h_2}(f)(\omega) &= f((h_1h_2)^{-1} \cdot \omega) \\
	&= f((h_2^{-1}h_1^{-1}) \cdot \omega) \\
	&= f(h_2^{-1} \cdot (h_1^{-1} \cdot \omega)) \\
	&= \theta_{h_2}(f)(h_1^{-1}\cdot \omega) \\
	&= \theta_{h_1} \big( \theta_{h_2}(f) \big)(\omega) \\
	&= \big( \theta_{h_1} \circ \theta_{h_2} \big)(f)(\omega).
\end{align*}
This proves that $\theta_{h_1 h_2}(f) = (\theta_{h_1} \circ \theta_{h_2})(f)$ and hence this implies that $\theta(h_1 h_2) = \theta(h_1) \circ \theta(h_2)$, as we wanted to show. Thus $\theta$ is a group homomorphism and this completes the proof of the proposition.

\end{proof}

\begin{remark}
	The reader might wonder why in defining the map $\theta \colon H \to \op{Aut}(K^{\Omega})$ in the previous proposition, we defined $\theta_h(f)(\omega)$ as $f(h^{-1} \cdot \omega)$ instead of defining it as $f(h \cdot \omega)$? The reason is that if it had been defined in the latter way, then we wouldn't have been able to prove that $\theta$ is a group homomorphism, but instead an antihomomorphism. More explicitly, if we had defined $\theta_h(f)(\omega) = f(h \cdot \omega)$ for $h \in H$, $f \in K^{\Omega}$ and $\omega \in \Omega$, then for $h_1, h_2 \in H$ we would have obtained
\begin{align*}
	\theta_{h_1h_2}(f)(\omega) &= f((h_1h_2) \cdot \omega) \\
	&= f(h_1 \cdot (h_2 \cdot \omega)) \\
	&= \theta_{h_1}(f)(h_2 \cdot \omega) \\
	&= \theta_{h_2} \big( \theta_{h_1}(f) \big)(\omega) \\
	&= \big( \theta_{h_2} \circ \theta_{h_1} \big)(f)(\omega).
\end{align*}
In other words, with that definition $\theta$ would have been an antihomomorphism, that is, a map satisfying the relation $\theta(h_1h_2) = \theta_{h_1h_2} = \theta_{h_2} \circ \theta_{h_1} = \theta(h_2) \circ \theta(h_1)$.
\end{remark}

\begin{definition}
Let $K$ and $H$ be groups, let $\Omega$ be an $H$-set, that is, a set equipped with a left action of $H$ given by $\cdot \colon H \times \Omega \longrightarrow \Omega$, and let $\theta \colon H \longrightarrow \op{Aut}(K^{\Omega})$ be the homomorphism defined in Proposition \ref{theta-homomorphism}. Then the \textit{(complete) wreath product} of the groups $K$ and $H$ with respect to the action of $H$ on $\Omega$, denoted $K \wr_{\Omega} H$, is defined by the semidirect product
$$
K \wr_{\Omega} H := K^{\Omega} \rtimes_{\theta} H.
$$
The group $K^{\Omega}$ is called the \textit{base} of the wreath product. Thus, the binary operation on the wreath product $K \wr_{\Omega} H$ is given by 
\begin{align}\label{wreath-multiplication}
	(f_1, h_1)(f_2, h_2) = (f_1 \theta_{h_1}(f_2), h_1h_2)
\end{align}
for $f_1, f_2 \in K^{\Omega}$ and $h_1, h_2 \in H$. Note that the product $f_1 \theta_{h_1}(f_2)$ is the product of functions in $K^{\Omega}$, which was defined pointwise.

In the particular case in which the set $\Omega = H$ and the action of $H$ on $\Omega$ is the left regular action, that is, when $H$ acts on the left on itself by left multiplication, then the corresponding wreath product is called the \textit{regular wreath product} of $K$ and $H$ and it is denoted by $K \wr_r H$.  
\end{definition}

\begin{remark}\label{Wreath-size}
	When $\Omega$ is finite, the order of the wreath product is given by $|K \wr_{\Omega} H| = |K|^{|\Omega|} \cdot |H|$.
\end{remark}

\begin{remark}
	What we have called the \textit{regular wreath product} is also called the \textit{standard complete wreath product} by some authors (see e.g. \cite[p. 326]{Rob96}). The terminology \textit{regular wreath product} is used for example by Rotman \cite[p. 175]{Rot95}.
\end{remark}

\begin{remark}\label{tuple-remark}
	If the functions in $K^{\Omega}$ are written in ``tuple notation", that is, if we write a function $f \colon \Omega \to K$ as a sequence $(a_{\omega})_{\omega \in \Omega}$, where $f(\omega) = a_{\omega} \in K$ for every $\omega \in \Omega$, then the relation $\theta_h(f)(\omega) = f(h^{-1} \cdot \omega)$ defining the automorphism $\theta_h \in \op{Aut}(K^{\Omega})$ is written as
	$$
	\theta_h((a_{\omega})_{\omega \in \Omega}) = (a_{h^{-1}\cdot \omega})_{\omega \in \Omega}.
	$$
This means that the $\omega$-th component of $\theta_h((a_{\omega})_{\omega \in \Omega})$ is equal to the $(h^{-1} \cdot \omega)$-th component of $(a_\omega)_{\omega \in \Omega}$.
\end{remark}

We now give a detailed example of a wreath product in order to consider these definitions in a concrete case. This example will be reconsidered later on as part of Example \ref{Biquadratic-example}.

\begin{example}\label{example-c2-regular-c2}
	Let $C_2 = \Z/2\Z = \{ [0]_2, [1]_2 \}$. We will now consider the regular wreath product $C_2 \wr_r C_2 = C_2^{C_2} \rtimes_{\theta} C_2$. This group has order $|C_2|^{|C_2|} \cdot |C_2| = 2^2 \cdot 2 = 8$. We note that here the operation on the group $C_2^{C_2}$ is given by pointwise addition, i.e., if $f, g \in C_2^{C_2}$, then $(f + g)([n]_2) = f([n]_1) + g([n]_2)$ for every $[n]_2 \in C_2$. Thus, in this example, certain group operations will use additive notation.
	
	Now, the group $C_2^{C_2} = \{ f_1, f_2, f_3, f_4 \}$ consists of the four functions
	\begin{align*}
		f_1 &\colon [0]_2 \mapsto [0]_2, \quad [1]_2 \mapsto [0]_2,\\
		f_2 &\colon [0]_2 \mapsto [0]_2, \quad [1]_2 \mapsto [1]_2,\\
		f_3 &\colon [0]_2 \mapsto [1]_2, \quad [1]_2 \mapsto [0]_2,\\
		f_4 &\colon [0]_2 \mapsto [1]_2, \quad [1]_2 \mapsto [1]_2.
	\end{align*}
	Here the function $f_1$ is the identity elelement in $C_2^{C_2}$ and moreover we have $C_2^{C_2} \simeq C_2 \oplus C_2$. Thus, as a set, the regular wreath product $C_2 \wr_r C_2$ is given by
	$$
	C_2 \wr_r C_2 = \{ (f_1, [0]_2), (f_2, [0]_2), (f_3, [0]_2), (f_4, [0]_2), (f_1, [1]_2), (f_2, [1]_2), (f_3, [1]_2), (f_4, [1]_2) \}.
	$$
	As we showed, this is a group of order 8. Up to isomorphism, we know that there are five groups of order 8, namely, the three abelian ones $C_8$, $C_4 \oplus C_2$ and $C_2 \oplus C_2 \oplus C_2$, and the two non-abelian ones given by the dihedral group $D_4$ and the quaternion group $Q_8$.
	
	We will now show that $C_2 \wr_r C_2 \simeq D_4$. To see this, we will prove that $C_2 \wr_r C_2$ satisfies the standard presentation for the dihedral group $D_4$, that is, the presentation $\langle x, y \mid x^4 = y^2 = e, \, yxy^{-1} = x^{-1}   \rangle$ (see e.g. \cite[p. 67]{Hun80}).
	
	Now, the map $\theta \colon C_2 \longrightarrow \op{Aut}(C_2^{C_2})$ is determined by the two automorphisms
	$$
	\theta_{[0]_2} \colon C_2^{C_2} \longrightarrow C_2^{C_2} \quad \text{and} \quad \theta_{[1]_2} \colon C_2^{C_2} \longrightarrow C_2^{C_2}
	$$
	that we compute below. 
	
	For $\theta_{[0]_2}$, note that given a function $f \colon C_2 \longrightarrow C_2$, we have
	$$
	\theta_{[0]_2}(f)([n]_2) = f(-[0]_2 + [n]_2) = f([n]_2)
	$$
	and hence $\theta_{[0]_2}(f) = f$ for every function $f \in C_2^{C_2}$.
	
	Similarly, for $\theta_{[1]_2}$ we have
	$$
	\theta_{[1]_2}(f)([n]_2) = f(-[1]_2 + [n]_2) = f([n + 1]_2).
	$$
	From this we see that
	\begin{align*}
		\theta_{[1]_2}(f_1) \colon [0]_2 \mapsto f_1([1]_2) = [0]_2, \quad [1]_2 \mapsto f_1([0]_2) = [0]_2,\\
		\theta_{[1]_2}(f_2) \colon [0]_2 \mapsto f_2([1]_2) = [1]_2, \quad [1]_2 \mapsto f_2([0]_2) = [0]_2,\\
		\theta_{[1]_2}(f_3) \colon [0]_2 \mapsto f_3([1]_2) = [0]_2, \quad [1]_2 \mapsto f_3([0]_2) = [1]_2,\\
		\theta_{[1]_2}(f_4) \colon [0]_2 \mapsto f_4([1]_2) = [1]_2, \quad [1]_2 \mapsto f_4([0]_2) = [1]_2.
	\end{align*}
	Thus $\theta_{[1]_2}(f_1) = f_1$, $\theta_{[1]_2}(f_2) = f_3$, $\theta_{[1]_2}(f_3) = f_2$ and $\theta_{[1]_2}(f_4) = f_4$.
	
	With this information we can compute the product of any two elements in the wreath product $C_2 \wr_r C_2$. We note that the element $(f_1, [0]_2)$ is the identity element in $C_2 \wr_r C_2$.
	
	Now let $x := (f_2, [1]_2) \in C_2 \wr_r C_2$. We will check that $x$ has order 4. Observe for instance that
	\begin{align*}
		x^2 = (f_2, [1]_2) (f_2, [1]_2) = (f_2 + \theta_{[1]_2}(f_2), [1]_2 + [1]_2)) = (f_2 + f_3, [0]_2) = (f_4, [0]_2).
	\end{align*}
	By doing similar calculations one can check that $x^3 = (f_3, [1]_2)$ and $x^4 = (f_1, [0]_2)$. 
	
	In a similar way, let $y := (f_1, [1]_2) \in C_2 \wr_r C_2$. Then one can check that $y$ has order 2 and hence we have $x^4 = y^2 = e$.
	
	Now, since $x$ has order 4, we have $x^{-1} = x^3$, that is $x^{-1} = (f_3, [1]_2)$. Similarly, since $y$ has order 2, we have $y^{-1} = y$. Then we can check that
	\begin{align*}
		yxy^{-1} &= (f_1, [1]_2)(f_2, [1]_2)(f_1, [1]_2)\\
		&= (f_1 + \theta_{[1]_2}(f_2), [0]_2)(f_1, [1]_2)\\
		&= (f_1 + f_3, [0]_2)(f_1, [1]_2)\\
		&= (f_3, [0]_2) (f_1, [1]_2)\\
		&= (f_3 + \theta_{[0]_2}(f_1), [1]_2)\\
		&= (f_3 + f_1, [1]_2)\\
		&= (f_3, [1]_2)\\
		&= x^{-1}.
	\end{align*}
	Therefore, these calculations imply that $C_2 \wr_r C_2$ has the presentation
	$$
	C_2 \wr_r C_2 = \langle x, y \mid x^4 = y^2 = e, \, yxy^{-1} = x^{-1} \rangle
	$$
	and from this we conclude that $C_2 \wr_r C_2 \simeq D_4$.
\end{example}

\section{Some technical results about isomorphisms and subgroups of wreath products}\label{Wreath-properties-section}\label{Technical-section}

As the title indicates, in this section we prove some technical results about isomorphisms and subgroups of wreath products. These results will be useful in identifying the wreath products into which the Galois groups are being embedded in the main theorems of the paper, as will be illustrated in the examples from Section \ref{Examples-section}.

The first result we prove allows for the identification of complete wreath products with isomorphic factors.

\begin{proposition}\label{Omega-wreath-isomorphism}
Let $K, \widehat{K}, H, \widehat{H}$ be arbitrary groups with $K \simeq \widehat{K}$ and $H \simeq \widehat{H}$, and let $\varphi \colon H \longrightarrow\widehat{H}$ and $\psi \colon K \longrightarrow \widehat{K}$ be isomorphisms. Suppose that $\Omega$ is an $H$-set, that $\widehat{\Omega}$ is an $\widehat{H}$-set, and that there is an equivariant bijection $\xi: \Omega \longrightarrow \widehat{\Omega}$ such that $\xi(h\cdot \omega) = \varphi(h) \cdot \xi(\omega)$ for all $\omega \in \Omega$ and $h\in H$. Then $K \wr_{\Omega} H \simeq \widehat{K} \wr_{\widehat{\Omega}} \widehat{H}$ and an isomorphism is given by 
	\begin{align*}
		\vartheta \colon &K \wr_{\Omega} H \longrightarrow \widehat{K} \wr_{\widehat{\Omega}} \widehat{H}\\
		&\   (f, h) \longmapsto \big( \psi \circ f \circ \xi^{-1}, \varphi(h) \big).
	\end{align*}
\end{proposition}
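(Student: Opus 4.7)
The plan is to verify directly that the explicit map $\vartheta$ is a well-defined group homomorphism and a bijection. Well-definedness is immediate: since $\xi^{-1}: \widehat{\Omega}\to \Omega$ is a bijection, $f:\Omega\to K$ is a function, and $\psi:K\to \widehat{K}$ is a group homomorphism, the composition $\psi\circ f\circ \xi^{-1}$ lies in $\widehat{K}^{\widehat{\Omega}}$, and $\varphi(h)\in \widehat{H}$ because $\varphi$ is a homomorphism. So $\vartheta(f,h)$ really belongs to $\widehat{K}\wr_{\widehat{\Omega}}\widehat{H}$.

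To check that $\vartheta$ is a homomorphism, I would take two arbitrary elements $(f_1,h_1),(f_2,h_2)\in K\wr_\Omega H$ and expand both sides of
\[
\vartheta\bigl((f_1,h_1)(f_2,h_2)\bigr) = \vartheta(f_1,h_1)\,\vartheta(f_2,h_2)
\]
using the multiplication rule \eqref{wreath-multiplication}. On the second coordinate, the required equality $\varphi(h_1h_2)=\varphi(h_1)\varphi(h_2)$ is just the fact that $\varphi$ is a homomorphism. The first coordinate is the crucial part: letting $\widehat{\theta}$ denote the $\widehat{H}$-action on $\widehat{K}^{\widehat{\Omega}}$ used to form the semidirect product, one has to show that
\[
\psi\circ\bigl(f_1\cdot\theta_{h_1}(f_2)\bigr)\circ\xi^{-1}
= (\psi\circ f_1\circ\xi^{-1})\cdot \widehat{\theta}_{\varphi(h_1)}(\psi\circ f_2\circ\xi^{-1}).
\]
Evaluating both sides at an arbitrary $\widehat{\omega}\in \widehat{\Omega}$, setting $\omega=\xi^{-1}(\widehat{\omega})$, and using that $\psi$ is a homomorphism of $K$, the left side becomes $\psi(f_1(\omega))\cdot\psi(f_2(h_1^{-1}\cdot \omega))$. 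The right side unfolds, by the definition of $\widehat{\theta}$ in Proposition \ref{theta-homomorphism}, to $\psi(f_1(\omega))\cdot \psi(f_2(\xi^{-1}(\varphi(h_1)^{-1}\cdot\widehat{\omega})))$. So the identity reduces to
\[
\xi^{-1}\bigl(\varphi(h_1)^{-1}\cdot \widehat{\omega}\bigr) = h_1^{-1}\cdot \omega.
\]
This is exactly where the equivariance of $\xi$ is used: applying the hypothesis $\xi(h\cdot \omega)=\varphi(h)\cdot \xi(\omega)$ with $h=h_1^{-1}$, together with $\varphi(h_1^{-1})=\varphi(h_1)^{-1}$, gives the desired identity after applying $\xi^{-1}$ to both sides. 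This equivariance step is the one point where something nontrivial happens, though it is still essentially a bookkeeping check.

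For bijectivity, I would exhibit the inverse explicitly, namely $\vartheta^{-1}(\widehat{f},\widehat{h}) = (\psi^{-1}\circ \widehat{f}\circ \xi, \varphi^{-1}(\widehat{h}))$, and check that $\vartheta\circ \vartheta^{-1}$ and $\vartheta^{-1}\circ\vartheta$ are the respective identities; both verifications amount to cancelling $\psi\circ \psi^{-1}$, $\xi^{-1}\circ \xi$, and $\varphi\circ\varphi^{-1}$. Alternatively, one can observe injectivity from $\psi,\varphi,\xi$ being bijections, and surjectivity by constructing a preimage of any $(\widehat{f},\widehat{h})$ via the same formula. The main (mild) obstacle in the whole argument is simply tracking the direction of the action under $\xi$; the inverse in $h_1^{-1}\cdot\omega$ comes from the formula $\theta_h(f)(\omega)=f(h^{-1}\cdot \omega)$, and one has to make sure the equivariance hypothesis is applied on the correct side. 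Once that is handled, no further ingredients are needed.
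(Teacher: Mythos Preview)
Your proof is correct and follows essentially the same approach as the paper: you verify the homomorphism property by reducing the first-coordinate identity to the equivariance relation $\xi^{-1}(\varphi(h_1)^{-1}\cdot\widehat{\omega})=h_1^{-1}\cdot\xi^{-1}(\widehat{\omega})$, and you handle bijectivity by writing down the same explicit inverse $(\widehat{f},\widehat{h})\mapsto(\psi^{-1}\circ\widehat{f}\circ\xi,\varphi^{-1}(\widehat{h}))$. The paper's argument is organized identically, only with slightly more expanded intermediate computations.
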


\begin{proof}
	The direct proof that $\vartheta$ is injective and surjective is straightforward, so we will just indicate that the inverse of $\vartheta$ is given by 
	\begin{align*}
		\vartheta^{-1} \colon &\widehat{K} \wr_{\widehat{\Omega}} \widehat{H}  \longrightarrow K \wr_{\Omega} H\\
		&\   (\hat{f}, \hat{h}) \longmapsto \big( \psi^{-1} \circ \hat{f} \circ \xi, \varphi^{-1}(\hat{h}) \big).
	\end{align*}
	Therefore, we must prove that $\vartheta$ is a group homomorphism. First we let $\theta \colon H \longrightarrow \op{Aut}(K^{\Omega})$ and $\widehat{\theta} \colon \widehat{H} \longrightarrow \op{Aut}(\widehat{K}^{\widehat{\Omega}})$ be the homomorphisms from Proposition \ref{theta-homomorphism}, which for $h \in H$, $\hat{h} \in \widehat{H}$, $f \colon \Omega \longrightarrow K$ and $\hat{f} \colon \widehat{\Omega} \longrightarrow \widehat{K}$ are defined by
	$$
	\theta_h(f)(\omega_0) = f(h^{-1} \cdot \omega_0) \quad \text{and} \quad \widehat{\theta}(\hat{f})(\hat{\omega}_0) = \hat{f}(\hat{h}^{-1} \cdot \hat{\omega}_0)
	$$
	for every $\omega_0 \in H$ and every $\hat{\omega}_0 \in \widehat{H}$.
	
	Now let $(f_1, h_1), (f_2, h_2) \in K \wr_{\Omega} H$. We will prove that 
	$$
	\vartheta \big( (f_1, h_1) (f_2, h_2) \big) = \vartheta \big( (f_1, h_1) \big) \vartheta \big( (f_2, h_2) \big).
	$$
	Observe first that
	\begin{align}\label{vartheta-homo-1}
		\vartheta \big( (f_1, h_1) (f_2, h_2) \big) &= \vartheta \big( (f_1 \theta_{h_1}(f_2), h_1 h_2) \big) \notag\\
		&= \big( \psi \circ ( f_1 \theta_{h_1}(f_2) ) \circ \xi^{-1}, \varphi(h_1 h_2) \big) \notag\\
		&= \big( \psi \circ ( f_1 \theta_{h_1}(f_2) ) \circ \xi^{-1}, \varphi(h_1) \varphi(h_2) \big).
	\end{align}
	On the other hand
	\begin{align}\label{vartheta-homo-2}
		\vartheta \big( (f_1, h_1) \big) \vartheta \big( (f_2, h_2) \big) &= \big( \psi \circ f_1 \circ \xi^{-1}, \varphi(h_1) \big)  \big( \psi \circ f_2 \circ \xi^{-1}, \varphi(h_2) \big) \notag\\
		&= \big( (\psi \circ f_1 \circ \xi^{-1}) \widehat{\theta}_{\varphi(h_1)}(\psi \circ f_2 \circ \xi^{-1}), \varphi(h_1) \varphi(h_2)  \big).
	\end{align}
	Therefore, comparing equations (\ref{vartheta-homo-1}) and (\ref{vartheta-homo-2}), we must prove that
	\begin{align}\label{vartheta-homo-3}
		\psi \circ ( f_1 \theta_{h_1}(f_2) ) \circ \xi^{-1} = (\psi \circ f_1 \circ \xi^{-1}) \widehat{\theta}_{\varphi(h_1)}(\psi \circ f_2 \circ \xi^{-1}).
	\end{align}
	
	Note that both the function on the left hand side and the one on the right hand side of (\ref{vartheta-homo-3}) are functions in $\widehat{K}^{\widehat{\Omega}}$. Hence, we will check that both sides give the same value when evaluated at an arbitrary element $\hat{\omega} \in \widehat{\Omega}$. 
	
	For the LHS of equation (\ref{vartheta-homo-3}) we have
	\begin{align}\label{LHS-vartheta-homo}
		\big( \psi \circ ( f_1 \theta_{h_1}(f_2) ) \circ \xi^{-1} \big) (\hat{\omega}) &= \psi \big( (f_1 \theta_{h_1}(f_2))(\xi^{-1}(\hat{\omega})) \big) \notag\\
		&= \psi \big( f_1(\xi^{-1}(\hat{\omega})) \theta_{h_1}(f_2)(\xi^{-1}(\hat{\omega})) \big) \notag\\
		&= \psi \big( f_1(\xi^{-1}(\hat{\omega})) f_2(h_1^{-1} \cdot \xi^{-1}(\hat{\omega}) \big) \notag\\
		&= \psi \big( f_1(\xi^{-1}(\hat{\omega})) \big) \psi \big( f_2(h_1^{-1} \cdot \xi^{-1}(\hat{\omega})) \big).
	\end{align}
	On the other hand, for the RHS of equation (\ref{vartheta-homo-3}) we have
	\begin{align}\label{RHS-vartheta-homo}
		\big( (\psi \circ f_1 \circ \xi^{-1}) \widehat{\theta}_{\varphi(h_1)}(\psi \circ f_2 \circ \xi^{-1}) \big) (\hat{\omega}) &= (\psi \circ f_1 \circ \xi^{-1}) (\hat{\omega}) \,   \widehat{\theta}_{\varphi(h_1)}(\psi \circ f_2 \circ \xi^{-1})(\hat{\omega}) \notag\\
		&= \psi \big( f_1(\xi^{-1}(\hat{\omega})) \big)  \big( \psi \circ f_2 \circ \xi^{-1} \big) (\varphi(h_1)^{-1} \cdot \hat{\omega}) \notag \\
		&= \psi \big( f_1(\xi^{-1}(\hat{\omega})) \big) \psi \big( f_2(\xi^{-1}(\varphi(h_1)^{-1} \cdot \hat{\omega})) \big) \notag\\
		&= \psi \big( f_1(\xi^{-1}(\hat{\omega})) \big) \psi \big( f_2(\xi^{-1}(\varphi(h_1^{-1}) \cdot \hat{\omega})) \big).
	\end{align}
        Now, observe that as a consequence of the equivariance of the map $\xi$ we have
        \begin{align}\label{xi-condition}
        h_1^{-1} \cdot \xi^{-1}(\hat{\omega}) = \xi^{-1}(\varphi(h_1^{-1}) \cdot \hat{\omega}).
        \end{align}
        Finally, comparing (\ref{LHS-vartheta-homo}) with (\ref{RHS-vartheta-homo}) and using (\ref{xi-condition}), we see that (\ref{vartheta-homo-3}) holds, and that implies that $\vartheta$ is a homomorphism. This completes the proof of the proposition.
\end{proof}

Now, taking $\Omega = H$, $\widehat{\Omega} = \widehat{H}$ and $\xi = \varphi$ in the previous proposition, we get the corresponding result for regular wreath products, which is stated in the following corollary. In this case the condition $\xi(h\cdot \omega) = \varphi(h) \cdot \xi(\omega)$ is reduced to the fact that $\varphi$ is a group homomorphism, because the action of $H$ on $\Omega=H$ is the left regular action, i.e. $H$ acts on itself on the left by left multiplication.

\begin{corollary}\label{Regular-wreath-isomorphism}
    Let $K, \widehat{K}, H, \widehat{H}$ be arbitrary groups with $K \simeq \widehat{K}$ and $H \simeq \widehat{H}$, and let $\varphi \colon H \longrightarrow\widehat{H}$ and $\psi \colon K \longrightarrow \widehat{K}$ be isomorphisms. Then $K \wr_r H \simeq \widehat{K} \wr_r \widehat{H}$ and an isomorphism is given by 
	\begin{align*}
		\vartheta \colon &K \wr_r H \longrightarrow \widehat{K} \wr_r \widehat{H}\\
		&\   (f, h) \longmapsto \big( \psi \circ f \circ \varphi^{-1}, \varphi(h) \big).
	\end{align*}
\end{corollary}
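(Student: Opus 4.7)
The plan is to derive Corollary \ref{Regular-wreath-isomorphism} as a direct specialization of Proposition \ref{Omega-wreath-isomorphism} to the case of the left regular actions. Concretely, I would take $\Omega := H$ equipped with the left regular action (so $h \cdot \omega := h\omega$ for $h \in H$ and $\omega \in \Omega = H$), and analogously $\widehat{\Omega} := \widehat{H}$ with the left regular action of $\widehat{H}$. By definition of the regular wreath product, the identifications $K \wr_r H = K \wr_{\Omega} H$ and $\widehat{K} \wr_r \widehat{H} = \widehat{K} \wr_{\widehat{\Omega}} \widehat{H}$ then hold, so the corollary is literally an instance of the proposition once an appropriate equivariant bijection is supplied.

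Next, I would set $\xi := \varphi \colon H \longrightarrow \widehat{H}$ in the role of the equivariant bijection demanded by Proposition \ref{Omega-wreath-isomorphism}. Since $\varphi$ is a group isomorphism, it is certainly a bijection of the underlying sets. The key observation is that, under the left regular actions on both sides, the equivariance condition $\xi(h \cdot \omega) = \varphi(h) \cdot \xi(\omega)$ translates to $\varphi(h \omega) = \varphi(h) \varphi(\omega)$, which is exactly the statement that $\varphi$ is a group homomorphism. Hence the equivariance is automatic, with no additional compatibility condition to verify.

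Applying Proposition \ref{Omega-wreath-isomorphism} with these choices then produces the isomorphism
$$\vartheta \colon K \wr_r H \longrightarrow \widehat{K} \wr_r \widehat{H}, \qquad (f, h) \longmapsto \bigl(\psi \circ f \circ \xi^{-1},\, \varphi(h)\bigr) = \bigl(\psi \circ f \circ \varphi^{-1},\, \varphi(h)\bigr),$$
matching the explicit formula in the statement of the corollary. There is no genuine obstacle here, as all of the nontrivial work (bijectivity of $\vartheta$ via the displayed inverse, and verification that $\vartheta$ respects the semidirect product multiplication) has already been carried out in the proof of Proposition \ref{Omega-wreath-isomorphism}; the only thing to check specifically for the corollary is that the equivariance hypothesis of the proposition holds, and that reduces tautologically to the fact that $\varphi$ is a homomorphism. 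Thus the proof amounts to little more than unwinding definitions and invoking the preceding proposition.
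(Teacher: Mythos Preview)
Your proposal is correct and follows essentially the same approach as the paper: specialize Proposition \ref{Omega-wreath-isomorphism} by taking $\Omega = H$, $\widehat{\Omega} = \widehat{H}$, and $\xi = \varphi$, and observe that the equivariance condition $\xi(h\cdot\omega) = \varphi(h)\cdot\xi(\omega)$ reduces to the homomorphism property of $\varphi$ under the left regular actions.
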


\begin{remark}
    We note that Proposition \ref{Omega-wreath-isomorphism} can be applied in conjunction with Theorem \ref{Main-Theorem-2}, Theorem \ref{Kummer-embedding} and Corollaries \ref{Cyclic-embedding} and \ref{Quadratic-embedding}. On the other hand, Corollary \ref{Regular-wreath-isomorphism} can be applied in conjunction with Theorem \ref{Main-Theorem}.
\end{remark}

Finally, the following result gives conditions under which a wreath product formed from subgroups of the original components is actually a subgroup of the original wreath product.

\begin{corollary}\label{Subgroup-wreath-isomorphism}
    Let $\widehat{K},\widehat{H}$ be arbitrary groups and let $K \leq \widehat{K}$ and $H \leq \widehat{H}$ be subgroups. Suppose that $\Omega$ is an $H$-set, that $\widehat{\Omega}$ is an $\widehat{H}$-set, and that there is an equivariant bijection $\xi: \Omega \longrightarrow \widehat{\Omega}$ such that $\xi(h\cdot \omega) = h \cdot \xi(\omega)$ for all $\omega \in \Omega$ and all $h\in H$. Then $K \wr_{\Omega} H$ embeds into $\widehat{K} \wr_{\widehat{\Omega}} \widehat{H}$ under the injective group homomorphism given by
	\begin{align*}
		\vartheta \colon &K \wr_{\Omega} H \hooklongrightarrow \widehat{K} \wr_{\widehat{\Omega}} \widehat{H}\\
		&\   (f, h) \longmapsto \big( \iota \circ f \circ \xi^{-1}, h \big),
	\end{align*}
    where $\iota: K \hooklongrightarrow \widehat{K}$ is the inclusion. In particular, when $\Omega = \widehat{\Omega}$, this allows us to consider $K \wr_{\Omega} H$ a subgroup of $\widehat{K} \wr_{\Omega} \widehat{H}$.
\end{corollary}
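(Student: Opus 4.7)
The plan is to mimic the argument from the proof of Proposition \ref{Omega-wreath-isomorphism}, since the present statement is essentially the same assertion but with the group isomorphisms $\varphi : H \to \widehat{H}$ and $\psi : K \to \widehat{K}$ there replaced by the inclusion homomorphisms $H \hookrightarrow \widehat{H}$ and $\iota : K \hookrightarrow \widehat{K}$. A careful inspection of that earlier proof shows that the surjectivity of $\varphi$ and $\psi$ was used \emph{only} to conclude surjectivity of $\vartheta$; the well-definedness, the injectivity, and the homomorphism property were deduced purely from the fact that $\varphi$ and $\psi$ are homomorphisms compatible with $\xi$. Hence the machinery transfers almost verbatim.

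Concretely, I would proceed in three short steps. First, to see that $\vartheta$ is well-defined, note that since $\xi$ is a bijection and $\iota \colon K \to \widehat{K}$, the composition $\iota \circ f \circ \xi^{-1}$ lies in $\widehat{K}^{\widehat{\Omega}}$, and since $h \in H \leq \widehat{H}$, indeed $(\iota \circ f \circ \xi^{-1}, h) \in \widehat{K} \wr_{\widehat{\Omega}} \widehat{H}$. Second, injectivity of $\vartheta$ follows at once: an equality $\vartheta(f_1, h_1) = \vartheta(f_2, h_2)$ forces $h_1 = h_2$ and $\iota \circ f_1 \circ \xi^{-1} = \iota \circ f_2 \circ \xi^{-1}$, whence the injectivity of $\iota$ and the bijectivity of $\xi$ yield $f_1 = f_2$.

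The main step is the homomorphism property. Letting $\theta$ and $\widehat{\theta}$ be the homomorphisms from Proposition \ref{theta-homomorphism} associated to the actions of $H$ on $\Omega$ and of $\widehat{H}$ on $\widehat{\Omega}$ respectively, I would compute
\begin{align*}
\vartheta\big((f_1, h_1)(f_2, h_2)\big) &= \big(\iota \circ (f_1 \theta_{h_1}(f_2)) \circ \xi^{-1},\, h_1 h_2\big), \\
\vartheta(f_1, h_1) \vartheta(f_2, h_2) &= \big((\iota \circ f_1 \circ \xi^{-1})\, \widehat{\theta}_{h_1}(\iota \circ f_2 \circ \xi^{-1}),\, h_1 h_2\big).
\end{align*}
The second coordinates agree, so it suffices to check that the two first coordinates take the same value at an arbitrary $\hat{\omega} \in \widehat{\Omega}$. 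Both sides reduce to $\iota\big(f_1(\xi^{-1}(\hat{\omega}))\big)\,\iota\big(f_2(\xi^{-1}(h_1^{-1} \cdot \hat{\omega}))\big)$ once one invokes the equivariance identity $\xi^{-1}(h_1^{-1} \cdot \hat{\omega}) = h_1^{-1} \cdot \xi^{-1}(\hat{\omega})$, which is immediate from $\xi(h \cdot \omega) = h \cdot \xi(\omega)$, together with the fact that $\iota$ is a group homomorphism. There is no serious obstacle here; the only thing to keep careful track of is this equivariance identity, which is precisely the step that is mirrored in the proof of Proposition \ref{Omega-wreath-isomorphism}. The final assertion of the corollary, that when $\Omega = \widehat{\Omega}$ one can view $K \wr_{\Omega} H$ as a subgroup of $\widehat{K} \wr_{\Omega} \widehat{H}$, then follows by taking $\xi = \op{id}_{\Omega}$.
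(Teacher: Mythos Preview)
Your proposal is correct and takes essentially the same approach as the paper: the paper's proof simply observes that the argument of Proposition~\ref{Omega-wreath-isomorphism} goes through verbatim when $\varphi$ and $\psi$ are merely injective homomorphisms (namely the inclusions $H \hookrightarrow \widehat{H}$ and $\iota : K \hookrightarrow \widehat{K}$), yielding an injective homomorphism $\vartheta$ rather than an isomorphism. You have spelled out this observation in more detail, but the substance is identical.
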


\begin{proof}
    This follows from the proof of Proposition \ref{Omega-wreath-isomorphism} by observing that if $\varphi: H \longrightarrow \widehat{H}$ and $\psi: K \longrightarrow \widehat{K}$ are both injective homomorphisms, then the map $\vartheta$ from Proposition \ref{Omega-wreath-isomorphism} is also an injective homomorphism, and using the natural inclusions $K \hooklongrightarrow \widehat{K}$ and $H \hooklongrightarrow \widehat{H}$ as injective homomorphisms.
\end{proof}

We will now show briefly two applications of these results to deduce the two theorems discussed at the beginning of Subsection \ref{Sharper-embeddings-subsection} of the Introduction, namely Theorems \ref{Chai-Oort-Theorem} and \ref{Barquero-Sanchez-Masri-Thorne-Theorem}. These two results will be deduced as consequences of Corollary \ref{Quadratic-embedding}.

\begin{proof}[Proof of Theorem \ref{Barquero-Sanchez-Masri-Thorne-Theorem}]

Let $K$ be a number field of degree $n$ and let $L$ be a quadratic extension of $K$. Let $\alpha$ be a primitive element for $K/\Q$, that is, write $K = \Q(\alpha)$ and let $\alpha_1:=\alpha, \dots, \alpha_n$ be the distinct conjugates of $\alpha$. Now, for each automorphism $\rho \in \Gal(K^c/\Q)$ we define a permutation $\pi_{\rho} \in S_n$, given by the relation $\rho(\alpha_i) = \alpha_{\pi_{\rho}(i)}$. Thus the set $G := \{ \pi_{\rho} \in S_d \mid \rho \in \Gal(K^c/\Q) \}$ is a transitive subgroup of $S_n$ and the map $\varphi \colon \Gal(K^c/\Q) \longrightarrow G$ given by $\varphi(\rho) = \pi_{\rho}$ is an isomorphism (see e.g. \cite[Proposition 6.3.1]{Cox12}).

Then the theorem asserts that the Galois group $\Gal(L^c/\Q)$ embeds as a subgroup of the wreath product $C_2 \wr_{\widehat{\Omega}} G$, where $\widehat{\Omega} := \{ 1, 2, \dots, n \}$ and $G$ acts on $\widehat{\Omega}$ in the natural way by permutations. To deduce this result from Corollary \ref{Quadratic-embedding}, observe that the corollary gives an embedding
$$
\Gal(L^c/\Q) \hooklongrightarrow \Gal(L/K) \wr_{\Omega} \Gal(K^c/\Q),
$$
where $\Omega := \operatorname{Hom}_{\Q}(K, K^c)$ is the set of embeddings $K \hooklongrightarrow K^c$. We can label the embeddings in $\Omega$ as $\operatorname{Hom}_{\Q}(K, K^c) = \{ \tau_1, \tau_2, \dots, \tau_n \}$, where $\tau_i \colon K \hooklongrightarrow K^c$ is given by $\tau_i(\alpha) := \alpha_i$ for every $i = 1, \dots, n$. Next, we define the bijection $\xi: \Omega \longrightarrow \widehat{\Omega}$ by $\xi(\tau_i) = i$. It remains to show that this bijection is equivariant under $\varphi$. Indeed, observe that
\begin{align*}
\xi(\rho \circ \tau_i) = \xi(\tau_{\pi_{\rho}(i)}) = \pi_{\rho}(i) = \varphi(\rho)(i) = \varphi(\rho)(\xi(\tau_i)).
\end{align*}
Then, by Proposition \ref{Omega-wreath-isomorphism} we conclude that $\Gal(L^c/\Q) \simeq C_2 \wr_{\widehat{\Omega}} G$, which is what we wanted to prove.
\end{proof}

\begin{remark}
In a similar manner, one can deduce Theorem \ref{Chai-Oort-Theorem} from Corollary \ref{Quadratic-embedding} in combination with Proposition \ref{Omega-wreath-isomorphism} and Corollary \ref{Subgroup-wreath-isomorphism}.
\end{remark}

\section{The Kaloujnine-Krasner Universal Embedding Theorem}\label{Kaloujnine-Krasner-section}

In 1951 Léo Kaloujnine\footnote{He was Russian and this is the transliteration he used for his name at the time when he wrote the papers on wreath products with Marc Krasner. His Russian name is nowadays transliterated as Lev Kaluznin.} and Marc Krasner published two very influential papers on wreath products for permutation groups and the theory of group extensions, titled \textit{``Produit complet des groupes de permutations et probl\`eme d'extension de groupes II and III"} (see \cite{KK51a} and \cite{KK51b}). In particular, in \cite{KK51a} they prove a fundamental result now known as the Kaloujnine-Krasner Universal Embedding Theorem. A generalization of the Universal Embedding Theorem of Kaloujnine and Krasner was given by D. F. Holt in \cite[Theorem 2]{Hol78}.

This theorem puts wreath products at the heart of the study of what is now known as the extension problem for groups. More specifically, in the theory of group extensions, the main objective is to study how a group $G$ can be constructed from knowledge of a normal subgroup $N \vartriangleleft G$ and knowledge of the corresponding quotient $G/N$. The interested reader can consult for example \cite[Chapter 11]{Rob96} or \cite[Chapter 7]{Rot95} to learn more about the theory of group extensions. 

Here we shall restrict ourselves to giving the definition of a group extension and a clear exposition of the theorem of Kaloujnine and Krasner. Let $N$ and $Q$ be arbitrary groups. Then, an \textit{extension} of $N$ by $Q$ is a group $G$ such that there is a normal subgroup $M \vartriangleleft G$ satisfying
\begin{itemize}
	\item[(i)] $M \simeq N$, and
	\item[(ii)] $G/M \simeq Q$.
\end{itemize}
Equivalently, we say that $G$ is an extension of $N$ by $Q$ if there is a short exact sequence of groups
$$
1 \longrightarrow N \overset{\iota}{\longrightarrow} G \overset{\varepsilon}{\longrightarrow} Q \longrightarrow 1.
$$
In this setting, we would have $M = \ker{\varepsilon} = \op{Im}(\iota)$.

Then, the Kaloujnine-Krasner theorem is the following. The proof that we give is adapted from the one sketched in \cite{Rot95}. Additionally, we exhibit the explicit embedding, which is then used at the end of the section to prove Theorem \ref{Main-Theorem}.

\begin{theorem}[Kaloujnine-Krasner]\label{Kaloujnine-Krasner-theorem}
Let $N$ and $Q$ be arbitrary groups. Then the regular wreath product $N \wr_r Q$ contains an isomorphic copy of every extension of $N$ by $Q$. More precisely, suppose that $G$ is a group extension of $N$ by $Q$ given by the exact sequence
$$
1 \longrightarrow N \overset{\iota}{\longrightarrow} G \overset{\varepsilon}{\longrightarrow} Q \longrightarrow 1.
$$
Then $G$ is isomorphic to a subgroup of the regular wreath product $N \wr_r Q$. In particular, if we identify $N$ with $\op{Im}(\iota) = \ker(\varepsilon) \vartriangleleft G$ and thus think of $N$ as a normal subgroup of $G$, then an explicit embedding is given by the map
\begin{align}\label{KK-varphi-definition}
\varphi \colon &G \longrightarrow N \wr_r Q\\
&g \longmapsto (\sigma_g, \varepsilon(g)) \notag,
\end{align}
where $\sigma_g \in N^{Q}$ is the map $\sigma_g \colon Q \longrightarrow N$ defined by
\begin{align}\label{KK-sigma-definition}
\sigma_g(q) := s(q)^{-1} g s \big( \varepsilon(g)^{-1}q \big)  
\end{align}
for every $q \in Q$.
\end{theorem}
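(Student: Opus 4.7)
The plan is to verify three properties of the map $\varphi$ in order: (a) well-definedness, i.e., that $\sigma_g(q)$ lies in $N$ for every $q \in Q$; (b) that $\varphi$ is a group homomorphism; and (c) that $\varphi$ is injective. Throughout I will identify $N$ with $\iota(N) = \ker(\varepsilon) \vartriangleleft G$. As a preliminary, I would invoke the Axiom of Choice to pick a set-theoretic section $s \colon Q \to G$ of $\varepsilon$, and normalize it so that $s(e_Q) = e_G$. Note that $s$ need not be a homomorphism; this is precisely why the construction must be done carefully.

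For (a), I would compute $\varepsilon(\sigma_g(q))$ directly. Since $\varepsilon$ is a homomorphism and $\varepsilon \circ s = \op{id}_Q$, we get
\[
\varepsilon(\sigma_g(q)) = \varepsilon(s(q))^{-1} \, \varepsilon(g) \, \varepsilon\bigl( s(\varepsilon(g)^{-1} q) \bigr) = q^{-1} \, \varepsilon(g) \, \varepsilon(g)^{-1} q = e_Q,
\]
so $\sigma_g(q) \in \ker(\varepsilon) = N$, and hence $\sigma_g \in N^Q$.

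For (b), the main computation, I would unfold both sides of $\varphi(g_1 g_2) = \varphi(g_1)\varphi(g_2)$ using the wreath product multiplication rule \eqref{wreath-multiplication} and the homomorphism $\theta$ from Proposition \ref{theta-homomorphism}. The second components clearly match, since $\varepsilon$ is a homomorphism. For the first components, I would need to show that for every $q \in Q$,
\[
\sigma_{g_1 g_2}(q) = \sigma_{g_1}(q) \cdot \sigma_{g_2}\bigl(\varepsilon(g_1)^{-1} q\bigr),
\]
which is the pointwise identity encoding $\sigma_{g_1} \cdot \theta_{\varepsilon(g_1)}(\sigma_{g_2})$ in the regular wreath product. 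Substituting the definition \eqref{KK-sigma-definition} on the right hand side, the factor $s(\varepsilon(g_1)^{-1} q)$ appearing at the end of $\sigma_{g_1}(q)$ cancels with the factor $s(\varepsilon(g_1)^{-1} q)^{-1}$ at the start of $\sigma_{g_2}(\varepsilon(g_1)^{-1}q)$, yielding $s(q)^{-1} g_1 g_2 s(\varepsilon(g_2)^{-1} \varepsilon(g_1)^{-1} q)$, which equals $\sigma_{g_1 g_2}(q)$ since $\varepsilon(g_1 g_2)^{-1} = \varepsilon(g_2)^{-1}\varepsilon(g_1)^{-1}$. This telescoping cancellation is the heart of the argument and is precisely what motivates the shape of the definition \eqref{KK-sigma-definition}.

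For (c), suppose $\varphi(g)$ is the identity of $N \wr_r Q$, namely the pair consisting of the constant map $q \mapsto e_N$ and $e_Q$. Then $\varepsilon(g) = e_Q$, so $g \in N$, and evaluating $\sigma_g$ at $q = e_Q$ gives $e_N = s(e_Q)^{-1} g s(e_Q) = g$ (using our normalization $s(e_Q) = e_G$), so $g = e_G$. Therefore $\ker(\varphi)$ is trivial and $\varphi$ is an injective homomorphism, as required.

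The main obstacle is bookkeeping in step (b): since $s$ is only a section of sets, one has to track carefully where the non-homomorphicity of $s$ could cause trouble, and the definition of $\sigma_g$ is engineered so that all the ``errors'' of $s$ cancel telescopically when multiplying. Once this cancellation is observed, the rest of the proof is essentially formal.
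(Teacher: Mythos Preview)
Your proposal is correct and follows essentially the same approach as the paper's proof: the same well-definedness check via $\varepsilon(\sigma_g(q)) = e_Q$, the same telescoping cancellation for the homomorphism property, and injectivity by evaluating $\sigma_g$ at $q = e_Q$. The only cosmetic differences are that the paper proves injectivity directly (assuming $\varphi(g_1) = \varphi(g_2)$ and cancelling, which avoids the normalization $s(e_Q) = e_G$) rather than via the kernel, and does so before the homomorphism step rather than after.
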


\begin{proof}
	Suppose that $G$ is an extension of $N$ by $Q$ given by the exact sequence
$$
1 \longrightarrow N \overset{\iota}{\longrightarrow} G \overset{\varepsilon}{\longrightarrow} Q \longrightarrow 1.
$$
To make the notation simpler in the proof, we will identify $N$ with $\op{Im}(\iota) = \ker(\varepsilon) \vartriangleleft G$ and thus we will assume without loss of generality that $N$ is a normal subgroup of $G$.
Now, since $\varepsilon \colon G \to Q$ is surjective, let $s \colon Q \to G$ be a right inverse of $\varepsilon$. Thus $\varepsilon \circ s = \op{id}_Q$.

With this notation, define a map 
\begin{align}
\varphi \colon &G \longrightarrow N \wr_r Q\\
&g \longmapsto (\sigma_g, \varepsilon(g)) \notag,
\end{align}
where $\sigma_g \in N^{Q}$ is the map $\sigma_g \colon Q \longrightarrow N$ defined by
\begin{align}
\sigma_g(q) := s(q)^{-1} g s \big( \varepsilon(g)^{-1}q \big)  
\end{align}
for every $q \in Q$.

We will now prove that the map $\varphi$ is an injective homomorphism and that will finish the proof of the theorem.

First we observe that the map $\sigma_g$ is well defined, that is, that for every $q \in Q$ we have $\sigma_g(q) \in N$. To see this, note that $\sigma_g(q) \in \ker(\varepsilon) = N$. Indeed, using the fact that $\varepsilon$ is a homomorphism and that $\varepsilon \circ s = \op{id}_Q$, we have
\begin{align*}
	\varepsilon(\sigma_g(q)) &= \varepsilon(s(q))^{-1} \varepsilon(g) \varepsilon \big( s(\varepsilon(g)^{-1}q) \big) \\
	&= q^{-1} \varepsilon(g) \varepsilon(g)^{-1} q\\
	&= e_{Q}.
\end{align*}

\textit{Injectivity of $\varphi$:} Let $g_1, g_2 \in G$ and suppose that $\varphi(g_1) = \varphi(g_2)$. Then $(\sigma_{g_1}, \varepsilon(g_1)) = (\sigma_{g_2}, \varepsilon(g_2))$ and hence we have $\sigma_{g_1} = \sigma_{g_2}$ and $\varepsilon(g_1) = \varepsilon(g_2)$. Then, evaluating the functions $\sigma_{g_1}$ and $\sigma_{g_2}$ at $e_{Q}$ (to make things simpler) and using the fact that $\varepsilon(g_1) = \varepsilon(g_2)$, we see that
\begin{align*}
	\sigma_{g_1}(e_Q) = \sigma_{g_2}(e_Q) &\implies s(e_Q)^{-1} g_1 s \big( \varepsilon(g_1)^{-1} e_Q \big) = s(e_Q)^{-1} g_2 s \big( \varepsilon(g_2)^{-1} e_Q \big) \\
	&\implies g_1 s \big( \varepsilon(g_1)^{-1} \big) = g_2 s \big( \varepsilon(g_2)^{-1} \big) \\
	&\implies g_1 = g_2.
\end{align*}
In the previous calculation we have successively cancelled equal elements appearing on both sides of the equalities. Therefore the map $\varphi$ is injective.

\textit{The map $\varphi$ is a homomorphism:} Let $g_1, g_2 \in G$. Then we want to prove that 
$$
\varphi(g_1 g_2) = \varphi(g_1) \varphi(g_2).
$$
Note first that
\begin{align}\label{phi-LHS}
	\varphi(g_1 g_2) = (\sigma_{g_1 g_2}, \varepsilon(g_1 g_2)).	
\end{align}

On the other hand, using the definition (\ref{wreath-multiplication}) of the multiplication on the regular wreath product $N \wr_r Q$, we have
\begin{align}\label{phi-RHS}
	\varphi(g_1) \varphi(g_2) &= (\sigma_{g_1}, \varepsilon(g_1)) (\sigma_{g_2}, \varepsilon(g_2)) \notag \\
	&= (\sigma_{g_1} \theta_{\varepsilon(g_1)}(\sigma_{g_2}), \varepsilon(g_1) \varepsilon(g_2)) \notag \\ 
	&= (\sigma_{g_1} \theta_{\varepsilon(g_1)}(\sigma_{g_2}), \varepsilon(g_1 g_2)).
\end{align}
Thus, comparing the right hand side of (\ref{phi-LHS}) with the right hand side of (\ref{phi-RHS}), it remains to show that 
\begin{align}\label{sigma-equality}
	\sigma_{g_1} \theta_{\varepsilon(g_1)}(\sigma_{g_2}) = \sigma_{g_1 g_2}.
\end{align}

Now, in order to prove this, recall that $N \wr_r Q = N^Q \rtimes_{\theta} Q$, where
\begin{align*}
	\theta \colon &Q \longrightarrow \op{Aut}(N^Q)\\
	& q \longmapsto \theta_q
\end{align*}
and $\theta_q \colon N^Q \to N^Q$ is an automorphism, defined by $\theta_q(\sigma)(x) := \sigma(q^{-1} x)$ for every $\sigma \in N^{Q}$ and every $x \in Q$.
Thus, to prove (\ref{sigma-equality}), we let $x \in Q$ be arbitrary and evaluate to get
\begin{align*}
	\big( \sigma_{g_1} \theta_{\varepsilon(g_1)}(\sigma_{g_2}) \big)(x) &= \sigma_{g_1}(x) \big( \theta_{\varepsilon(g_1)}(\sigma_{g_2}) \big) (x) \\
	&= \sigma_{g_1}(x) \sigma_{g_2}(\varepsilon(g_1)^{-1} x) \\
	&= \Big( s(x)^{-1} g_1 s(\varepsilon(g_1)^{-1} x) \Big)  \Big( s \big( \varepsilon(g_1)^{-1} x \big)^{-1} g_2 s \big( \varepsilon(g_2)^{-1} \varepsilon(g_1)^{-1} x \big) \Big) \\
	&= s(x)^{-1} g_1 g_2 s \big( \varepsilon(g_1 g_2)^{-1} x \big) \\
	&= \sigma_{g_1 g_2}(x).
\end{align*}
Therefore, this proves (\ref{sigma-equality}) and this completes the proof that $\varphi(g_1 g_2) = \varphi(g_1) \varphi(g_2)$. Hence $\varphi \colon G \longrightarrow N \wr_r Q$ is an injective group homomorphism and this proves the theorem.
\end{proof}

\subsection{Proof of Theorem \ref{Main-Theorem}}

Let $F \subseteq K \subseteq L$ be a tower of fields with $L/F$ finite and separable. Then, from basic Galois theory we have the sequence of maps 
\begin{align}\label{theorem-1-sequence}
	1 \longrightarrow \Gal(L^c/K^c) \overset{\iota}{\longrightarrow} \Gal(L^c/F) \overset{\varepsilon}{\longrightarrow} \Gal(K^c/F) \longrightarrow 1,   
\end{align}
where $\iota$ is the inclusion and $\varepsilon$ is the restriction map, i.e., if $\rho \in \Gal(L^c/F)$, then $\varepsilon(\rho) = \rho|_{K^c}$. Moreover, by the Isomorphism Extension Theorem (see e.g. \cite[Theorem 3.20]{Mor96}), the map $\varepsilon$ is surjective and also we have 
\begin{align*}
\ker{(\varepsilon)} &= \{ \tau \in \Gal(L^c/F) \mid \tau|_{K^c} = \operatorname{id}_{K^c} \}\\
&= \Gal(L^c/K^c).
\end{align*}
This implies that the sequence \eqref{theorem-1-sequence} is a short exact sequence. Hence, by Theorem \eqref{Kaloujnine-Krasner-theorem} and equations \eqref{KK-varphi-definition} and \eqref{KK-sigma-definition}, we obtain the explicit embedding in Theorem \ref{Main-Theorem} and this concludes its proof.


\section{Proof of Theorem \ref{Main-Theorem-2}}\label{Main-Theorem-2-section}

We will now prove Theorem \ref{Main-Theorem-2}, which we restate here for the convenience of the reader.

\begin{theorem}
    Consider a tower of fields $F \subseteq K \subseteq L$ with $L/F$ finite and separable. Let $L^c$ be the Galois closure of $L/F$ and $K^c$ denote the Galois closure of $K/F$. Let $\Omega:= \op{Hom}_F(K, K^c)$, and let $s: \Omega \longrightarrow \Gal(L^c/F)$ be a map that extends every $F$-embedding $\omega \in \Omega$ to an automorphism $s(\omega) \in \Gal(L^c/F)$. Define the map
    \begin{align*}
		\varphi \colon &\Gal(L^c/F) \hooklongrightarrow \Gal(L^c/K) \wr_{\Omega} \Gal(K^c/F) \\
		&\quad \quad \qquad  \rho \longmapsto (\sigma_{\rho}, \varepsilon(\rho)), \notag
	\end{align*}
    where $\sigma_{\rho}: \Omega = \op{Hom}_F(K, K^c) \longrightarrow \Gal(L^c/K)$ is given by 
    \begin{align*}
    \sigma_\rho(\omega) = s(\omega)^{-1} \circ \rho \circ s(\varepsilon(\rho)^{-1}\circ \omega)
    \end{align*}
    and $\varepsilon$ is the the restriction from  $\Gal(L^c/F)$ to $\Gal(K^c/F)$. Then, the map $\varphi$ is an injective group homomorphism, embedding the group $\Gal(L^c/F)$ into the wreath product $\Gal(L^c/K) \wr_{\Omega} \Gal(K^c/F)$.
\end{theorem}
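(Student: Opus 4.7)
The overall strategy is to mimic the argument used to prove the Kaloujnine--Krasner theorem (Theorem \ref{Kaloujnine-Krasner-theorem}), but with the regular action of $\Gal(K^c/F)$ on itself replaced by the action by composition of $\Gal(K^c/F)$ on $\Omega=\op{Hom}_F(K,K^c)$, and with the section $s$ defined on the set $\Omega$ rather than on the Galois group $\Gal(K^c/F)$. The proof splits naturally into three parts: well-definedness of $\sigma_\rho$, the homomorphism property, and injectivity.

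First I would verify that $\sigma_\rho$ genuinely lands in $\Gal(L^c/K)^\Omega$, i.e.\ that for every $\omega\in\Omega$ the automorphism $\sigma_\rho(\omega)=s(\omega)^{-1}\circ\rho\circ s(\varepsilon(\rho)^{-1}\circ\omega)$ fixes $K$ pointwise. The computation is purely formal: for $k\in K$, the map $s(\varepsilon(\rho)^{-1}\circ\omega)$ extends the $F$-embedding $\varepsilon(\rho)^{-1}\circ\omega$, so it sends $k$ to $\varepsilon(\rho)^{-1}(\omega(k))$; applying $\rho$ and using $\rho|_{K^c}=\varepsilon(\rho)$ recovers $\omega(k)$; and $s(\omega)^{-1}$ then sends $\omega(k)$ back to $k$ because $s(\omega)$ extends $\omega$. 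So the three ingredients telescope in exactly the way required.

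The main technical step is the verification that $\varphi$ is a homomorphism. Given $\rho_1,\rho_2\in\Gal(L^c/F)$, the second coordinate of $\varphi(\rho_1\rho_2)$ equals $\varepsilon(\rho_1\rho_2)=\varepsilon(\rho_1)\varepsilon(\rho_2)$ trivially (restriction is multiplicative because $K^c/F$ is Galois), so the content is the identity
\begin{equation*}
\sigma_{\rho_1\rho_2}(\omega)=\sigma_{\rho_1}(\omega)\circ\theta_{\varepsilon(\rho_1)}(\sigma_{\rho_2})(\omega)=\sigma_{\rho_1}(\omega)\circ\sigma_{\rho_2}\bigl(\varepsilon(\rho_1)^{-1}\circ\omega\bigr)
\end{equation*}
for every $\omega\in\Omega$, where I have used the fact that the action of $\Gal(K^c/F)$ on $\Omega$ is by left composition, so $\theta_{\varepsilon(\rho_1)}$ acts on functions by precomposition with $\varepsilon(\rho_1)^{-1}\circ(-)$. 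Expanding both $\sigma_{\rho_1}(\omega)$ and $\sigma_{\rho_2}(\varepsilon(\rho_1)^{-1}\circ\omega)$ using the defining formula, the crucial middle pair $s(\varepsilon(\rho_1)^{-1}\circ\omega)\circ s(\varepsilon(\rho_1)^{-1}\circ\omega)^{-1}$ cancels, leaving
\begin{equation*}
s(\omega)^{-1}\circ\rho_1\rho_2\circ s\bigl(\varepsilon(\rho_2)^{-1}\circ\varepsilon(\rho_1)^{-1}\circ\omega\bigr)=s(\omega)^{-1}\circ\rho_1\rho_2\circ s\bigl(\varepsilon(\rho_1\rho_2)^{-1}\circ\omega\bigr),
\end{equation*}
which is precisely $\sigma_{\rho_1\rho_2}(\omega)$. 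This cancellation is the one place where one has to be careful about composition order in the $\Omega$--action, and it is the main (if modest) obstacle: any sign or convention mismatch in the action will break the telescoping.

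Finally, injectivity is straightforward. If $\varphi(\rho_1)=\varphi(\rho_2)$ then $\varepsilon(\rho_1)=\varepsilon(\rho_2)$ and the functions $\sigma_{\rho_1},\sigma_{\rho_2}$ agree on $\Omega$. Evaluating at any single $\omega\in\Omega$ (for instance the inclusion $K\hookrightarrow K^c$) and using that $\varepsilon(\rho_1)=\varepsilon(\rho_2)$ makes the two outer factors $s(\omega)^{-1}$ and $s(\varepsilon(\rho_i)^{-1}\circ\omega)$ coincide, so cancelling them gives $\rho_1=\rho_2$. This completes the three parts of the plan, and together they give that $\varphi$ is an injective group homomorphism into $\Gal(L^c/K)\wr_\Omega\Gal(K^c/F)$.
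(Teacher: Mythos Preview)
Your proposal is correct and follows essentially the same approach as the paper's proof: both verify well-definedness by the same telescoping computation on $k\in K$, establish the homomorphism property via the cancellation of the middle factor $s(\varepsilon(\rho_1)^{-1}\circ\omega)\circ s(\varepsilon(\rho_1)^{-1}\circ\omega)^{-1}$, and deduce injectivity by evaluating at a single $\omega$ and cancelling the matching outer factors. The paper's version is slightly more verbose in writing out each step, but the argument is identical in substance.
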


\begin{proof}
We first have to check that the map $\varphi$ is well-defined, that is, that for every $\rho \in \Gal(L^c/F)$ and every $\omega \in \Omega = \op{Hom}_F(K, K^c)$ we have
$$
\sigma_{\rho}(\omega) \in \Gal(L^c/K).
$$
Observe first that $\varepsilon(\rho)^{-1} \circ \omega \in \Omega$. Hence $s(\varepsilon(\rho)^{-1} \circ \omega) \in \Gal(L^c/F)$. Then, since also $\rho$ and $s(\omega)$ are in $\Gal(L^c/F)$, the composition of automorphisms defining $\sigma_{\rho}(\omega)$ satisfies 
$$
\sigma_\rho(\omega) = s(\omega)^{-1} \circ \rho \circ s(\varepsilon(\rho)^{-1}\circ \omega) \in \Gal(L^c/F).
$$
Now we want to prove that $\sigma_{\rho}(\omega)$ leaves $K$ fixed, so that we can conclude that $\sigma_{\rho}(\omega) \in \Gal(L^c/K)$.

Thus, let $k \in K$. Then 
\begin{align*}
\sigma_{\rho}(\omega)(k) &= \big( s(\omega)^{-1} \circ \rho \circ s(\varepsilon(\rho)^{-1}\circ \omega) \big)(k)\\
&= s(\omega)^{-1} \Big( \rho \big( s(\varepsilon(\rho)^{-1} \circ \omega)(k) \big) \Big) \\
&= s(\omega)^{-1} \Big( \rho \big( \varepsilon(\rho)^{-1}(\omega(k)) \big)  \Big)  &&\text{(since $k \in K$ we can omit $s$)} \\
&= s(\omega)^{-1} \Big( \varepsilon(\rho) \big( \varepsilon(\rho)^{-1}(\omega(k)) \big)  \Big)  &&\text{(since $\varepsilon(\rho)^{-1}(\omega(k)) \in K^c$ we can change $\rho$ to $\varepsilon(\rho)$)}\\
&= s(\omega)^{-1}\big( \omega(k) \big)\\
&= k. &&\text{(we used the fact that $\omega(k) = s(\omega)(k)$)}
\end{align*}
Therefore, this shows that $\sigma_{\rho}(\omega) \in \Gal(L^c/K)$ and this completes the proof that $\varphi$ is well-defined. Now we proceed to prove that $\varphi$ is an injective group homomorphism.

\noindent $\blacklozenge$ \textit{$\varphi$ is a group homomorphism:} Let $\rho_1, \rho_2 \in \Gal(L^c/F)$. We will prove that
$$
\varphi(\rho_1 \circ \rho_2) = \varphi(\rho_1) \varphi(\rho_2).
$$
Note first that 
\begin{align*}
    \varphi(\rho_1\circ \rho_2) &= (\sigma_{\rho_1\circ \rho_2}, \varepsilon(\rho_1 \circ \rho_2))\\
    &= (\sigma_{\rho_1\circ \rho_2},(\rho_1 \circ \rho_2) |_{K^c}) \\
    &= (\sigma_{\rho_1\circ \rho_2}, \rho_1 |_{K^c} \circ \rho_2 |_{K^c}).
\end{align*}
On the other hand, by the definition of multiplication in the wreath product $\Gal(L/K) \wr_{\Omega} \Gal(K^c/F)$, we have
\begin{align*}
    \varphi(\rho_1)\varphi(\rho_2) &= (\sigma_{\rho_1}, \varepsilon(\rho_1))(\sigma_{\rho_2}, \varepsilon(\rho_2)) \\
    &= (\sigma_{\rho_1} \theta_{\varepsilon(\rho_1)}(\sigma_{\rho_2}), \varepsilon(\rho_1) \circ \varepsilon(\rho_2)) \\
    &= (\sigma_{\rho_1} \theta_{\varepsilon(\rho_1)}(\sigma_{\rho_2}), \rho_1|_{K^c} \circ \rho_2|_{K^c}).
\end{align*}
Thus, we need to show that
\begin{align}\label{Main-homomorphism-2-proof}
     \sigma_{\rho_1} \theta_{\varepsilon(\rho_1)}(\sigma_{\rho_2}) = \sigma_{\rho_1\circ\rho_2}.
\end{align}

Hence, let $\omega \in \Omega$. Then, evaluating the LHS of (\ref{Main-homomorphism-2-proof}) at $\omega$ we have
\begin{align*}
\big( \sigma_{\rho_1} \theta_{\varepsilon(\rho_1)}(\sigma_{\rho_2}) \big)(\omega) &= \sigma_{\rho_1}(\omega) \circ \sigma_{\rho_2}(\varepsilon(\rho_1)^{-1} \circ \omega)\\
&= s(\omega)^{-1} \circ \rho_1 \circ s(\varepsilon(\rho_1)^{-1}\circ \omega) \circ s(\varepsilon(\rho_1)^{-1}\circ \omega)^{-1} \circ \rho_2 \circ s(\varepsilon(\rho_2)^{-1} \circ \varepsilon(\rho_1)^{-1}\circ \omega)\\
&= s(\omega)^{-1} \circ \big( \rho_1 \circ \rho_2 \big) \circ s( \varepsilon(\rho_1 \circ \rho_2)^{-1}\circ \omega)\\
&= \sigma_{\rho_1 \circ \rho_2}(\omega).
\end{align*}
This proves that $\varphi$ is a group homomorphism.

\noindent $\blacklozenge$ \textit{$\varphi$ is injective:} Let $\rho_1, \rho_2 \in \Gal(L^c/F)$ and suppose that $\varphi(\rho_1) = \varphi(\rho_2)$. Then 
$$
(\sigma_{\rho_1}, \varepsilon(\rho_1)) = (\sigma_{\rho_2}, \varepsilon(\rho_2)),
$$
that is, we have $\sigma_{\rho_1} = \sigma_{\rho_2}$ and $\varepsilon(\rho_1) = \varepsilon(\rho_2)$. Now, since $\sigma_{\rho_1} = \sigma_{\rho_2}$, then for every $\omega \in \Omega$ we have
\begin{align*}
s(\omega)^{-1} \circ \rho_1 \circ s(\varepsilon(\rho_1)^{-1} \circ \omega) = s(\omega)^{-1} \circ \rho_2 \circ s(\varepsilon(\rho_2)^{-1} \circ \omega)
\end{align*}
and cancelling the terms $s(\omega)^{-1}$ on the left and the terms $s(\varepsilon(\rho_2)^{-1} \circ \omega)$ on the right of the previous equality, we conclude that $\rho_1 = \rho_2$, thus proving that $\varphi$ is injective.

This completes the proof of the theorem.
\end{proof}

\section{Galois groups and wreath products for Kummer extensions}\label{Kummer-section}

In this section we give a proof of Theorem \ref{Kummer-embedding}. Before we embark on the proof though, we will need the following technical lemma.

\begin{lemma}\label{Galois-closure-2}
Let $F \subseteq K \subseteq L$ be a tower of fields with $L/F$ finite and separable. Suppose that $K$ contains a primitive $n$-th root of unity $\omega$. Denote by $K^c$ and $L^c$ the Galois closures of $K/F$ and $L/F$, respectively. Then, if $L = K(\sqrt[\leftroot{-3}\uproot{3} n_1]{\alpha_1},\cdots, \sqrt[\leftroot{-3}\uproot{3} n_r]{\alpha_r})$ for some $\alpha_j \in K$ and some positive integers $n_j$ dividing $n$, we have that the Galois closure $L^c$ is given by
$$
L^c = K^c(X),
$$
where $X := \{ \sqrt[\leftroot{-3}\uproot{3} n_j]{\tau(\alpha_j)} \suchthat \tau \in \op{Hom}_F(K, K^c) \text{ and } j\in\{1,\cdots,r\}  \}$.
\end{lemma}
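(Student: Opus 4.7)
The plan is to prove the equality by showing two inclusions. For the forward inclusion $K^c(X) \subseteq L^c$, first note that $K^c \subseteq L^c$ since $L^c/F$ is normal and contains $K$, so it contains every conjugate of every element of $K$. Next, given any $\tau \in \operatorname{Hom}_F(K, K^c)$, extend $\tau$ to an automorphism $\widetilde{\tau} \in \Gal(L^c/F)$ using the Isomorphism Extension Theorem (this is possible since $L^c/F$ is normal). Then $\widetilde{\tau}(\sqrt[\leftroot{-3}\uproot{3} n_j]{\alpha_j})$ lies in $L^c$ and is some $n_j$-th root of $\widetilde{\tau}(\alpha_j) = \tau(\alpha_j)$, so it equals $\omega_j^k \cdot \sqrt[\leftroot{-3}\uproot{3} n_j]{\tau(\alpha_j)}$ for some integer $k$, where $\omega_j = \omega^{n/n_j} \in K \subseteq L^c$. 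Multiplying by a power of $\omega_j$ yields $\sqrt[\leftroot{-3}\uproot{3} n_j]{\tau(\alpha_j)} \in L^c$, hence $K^c(X) \subseteq L^c$.

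For the reverse inclusion $L^c \subseteq K^c(X)$, the strategy is to exhibit $K^c(X)$ as the splitting field over $F$ of an explicit family of polynomials, which forces it to be normal over $F$, and then to observe that it contains $L$. For each $j \in \{1,\dots,r\}$ let $m_j(x) \in F[x]$ be the minimal polynomial of $\alpha_j$ over $F$. The roots of $m_j(x)$ in $K^c$ are exactly the distinct values of $\tau(\alpha_j)$ as $\tau$ runs over $\operatorname{Hom}_F(K, K^c)$. Consider the polynomial $m_j(x^{n_j}) \in F[x]$, whose roots are precisely the $n_j$-th roots of the conjugates of $\alpha_j$. Since $K^c$ contains the $n_j$-th roots of unity (because $\omega_j = \omega^{n/n_j}$ lies in $K$), each factor $x^{n_j} - \tau(\alpha_j)$ splits completely over $K^c(X)$, and thus so does $m_j(x^{n_j})$. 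Choosing generators $\beta_1,\dots,\beta_s$ of $K/F$ with minimal polynomials $p_i(x) \in F[x]$, the field $K^c(X)$ is precisely the splitting field over $F$ of the family $\{p_1,\dots,p_s,\, m_1(x^{n_1}),\dots,m_r(x^{n_r})\}$: it contains all roots of these polynomials, and conversely the splitting field of this family contains $K^c$ (from the $p_i$) together with all $\sqrt[\leftroot{-3}\uproot{3} n_j]{\tau(\alpha_j)}$ (from the $m_j(x^{n_j})$). Hence $K^c(X)/F$ is normal.

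Finally, by taking $\tau$ to be the inclusion $K \hookrightarrow K^c$, we see $\sqrt[\leftroot{-3}\uproot{3} n_j]{\alpha_j} \in X$ for each $j$, so $L = K(\sqrt[\leftroot{-3}\uproot{3} n_1]{\alpha_1},\dots,\sqrt[\leftroot{-3}\uproot{3} n_r]{\alpha_r}) \subseteq K^c(X)$. Separability of $K^c(X)/F$ follows since $K^c/F$ is separable (as $K/F$ is) and each $x^{n_j} - \tau(\alpha_j)$ is separable (because $n_j \mid n$ and $\operatorname{char}(F) \nmid n$, as $K$ contains a primitive $n$-th root of unity). Therefore $K^c(X)$ is a Galois extension of $F$ containing $L$, and by minimality of the Galois closure we get $L^c \subseteq K^c(X)$, completing the proof.

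The only point that requires care is verifying that $K^c(X)$ is actually normal over $F$; the key trick is recognizing that the substitution $x \mapsto x^{n_j}$ in $m_j(x)$ yields a polynomial in $F[x]$ whose roots are exactly the elements of the form $\sqrt[\leftroot{-3}\uproot{3} n_j]{\tau(\alpha_j)}$ (up to roots of unity already in $K^c$), which together with the generators of $K^c$ give $K^c(X)$ as an explicit splitting field.
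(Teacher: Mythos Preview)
Your proof is correct. The paper's argument is more direct: it writes $K = F(\beta)$ via the Primitive Element Theorem, so that $L = F(\beta, \sqrt[n_1]{\alpha_1}, \dots, \sqrt[n_r]{\alpha_r})$, and then simply computes $L^c$ as the field generated over $F$ by all Galois conjugates of these generators; the conjugates of $\sqrt[n_j]{\alpha_j}$ are the elements $\omega^\ell \sqrt[n_j]{\alpha_{i,j}}$, and since $\omega \in K$ the powers of $\omega$ may be dropped, yielding $L^c = K^c(X)$ in one stroke. Your approach instead proves two inclusions, with the substantive one ($L^c \subseteq K^c(X)$) handled by recognizing $K^c(X)$ as the splitting field over $F$ of the polynomials $p_i(x)$ and $m_j(x^{n_j})$, hence normal, and then invoking minimality of the Galois closure. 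Both arguments rest on the same observation about the conjugates of $\sqrt[n_j]{\alpha_j}$; your version trades brevity for a cleaner separation into inclusions and an explicit check of separability (which the paper's proof leaves implicit), while the paper's version is shorter because it never needs to verify normality of $K^c(X)$ abstractly.
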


\begin{proof}
    Since $L/F$ is finite and separable, also $K/F$ is separable. By the Primitive Element Theorem, there is an element $\beta \in K$ such that $K = F(\beta)$. If $\beta = \beta_1,\beta_2,\cdots,\beta_d$ are the Galois conjugates of $\beta$, then $K^c = F(\beta_1,\cdots,\beta_d)$ and this implies that $L= K(\sqrt[\leftroot{-3}\uproot{3} n_1]{\alpha_1},\cdots,\sqrt[\leftroot{-3}\uproot{3} n_r]{\alpha_r} ) = F(\beta, \sqrt[\leftroot{-3}\uproot{3} n_1]{\alpha_1},\cdots,\sqrt[\leftroot{-3}\uproot{3} n_r]{\alpha_r})$.
    
    Now, if $\alpha_j = \alpha_{1,j}, \alpha_{2,j},\cdots, \alpha_{m_j,j}$ are the Galois conjugates of $\alpha_j$, then 
    $$
    \widetilde{X_j} := \{\omega^\ell \sqrt[\leftroot{-3}\uproot{6} n_j]{\alpha_{i,j}} \suchthat  1\leq i \leq m_j, 0 \leq \ell <n \}
    $$ 
    is the set of Galois conjugates of $\sqrt[\leftroot{-3}\uproot{5} n_j]{\alpha_j}$, and therefore we have 
    $$
    L^c = F(\beta_1,\cdots,\beta_d, \widetilde{X_1},\cdots, \widetilde{X_r}) = K^c(\widetilde{X_1},\cdots,\widetilde{X_r}).
    $$
    Then, since $K$ contains a primitive $n$-th root of unity $\omega$, all the powers $\omega^{\ell}$ are already in $K$, so we actually have
    $$
    K^c(\widetilde{X_1},\cdots,\widetilde{X_r}) = K^c(\widehat{X_1},\cdots,\widehat{X_r}),$$
    where
    $$
    \widehat{X_j} := \{ \sqrt[\leftroot{-3}\uproot{6} n_j]{\alpha_{i,j}} \suchthat  1\leq i \leq m_j \}.
    $$
    Now, if $\tau \in \op{Hom}_F(K,K^c)$, then for each $j\in\{1,\cdots,r\}$ we have $\tau(\alpha_j) = \tau(\alpha_{1,j}) = \alpha_{i,j}$ for some $i \in \{1,\cdots,m_j\}$. Therefore, this gives us that  $X = \widehat{X_1} \cup \cdots \cup \widehat{X_r}$, and this means that $L^c=K^c(X)$. 
\end{proof}

With this preliminary result, we are now ready to prove Theorem \ref{Kummer-embedding}, which we restate here for the convenience of the reader.

\begin{theorem}
Let $F \subseteq K \subseteq L$ be a tower of fields with $L/F$ finite and separable. Moreover, suppose that $K$ contains a primitive $n$-th root of unity $\omega$ and that $L/K$ is an $n$-Kummer extension. As in Theorem \ref{Kummer-extensions}, write $L = K(\sqrt[\leftroot{-3}\uproot{3} n_1]{\alpha_1}, \dots, \sqrt[\leftroot{-3}\uproot{3} n_r]{\alpha_r})$ for some $\alpha_i \in K$ and some positive integers $n_i$ dividing $n$, and put $\Gal(L/K) = \langle \eta_1 \rangle \cdots \langle \eta_r \rangle$, where the order of $\eta_i$ equals $n_i$ for each $i\in \{1,2,\dots, r\}$. Denote the Galois closures of $K/F$ and $L/F$ by $K^c$ and $L^c$, respectively and let $\Omega = \op{Hom}_{F}(K, K^c)$ be the set of $F$-homomorphisms $K \hookrightarrow K^c$. The group $\Gal(K^c/F)$ acts on $\Omega$ on the left by composition of maps. Then the Galois group $\Gal(L^c/F)$ embeds into the wreath product $\Gal(L/K) \wr_{\Omega} \Gal(K^c/F)$. More precisely, consider the map 
	\begin{align}
		\varphi \colon &\Gal(L^c/F) \longrightarrow \Gal(L/K) \wr_{\Omega} \Gal(K^c/F) \\
		&\quad \quad \qquad  \rho \longmapsto (\sigma_{\rho}, \varepsilon(\rho)) \notag
	\end{align}
	where $\sigma_{\rho} \colon \op{Hom}_F(K, K^c) \longrightarrow \Gal(L/K)$ is defined for every embedding $\tau \in \op{Hom}_F(K, K^c)$ by
	$$
	   \sigma_{\rho}(\tau) = \eta_1^{\chi(\rho, \tau,\alpha_1)}\circ \cdots \circ \eta_r^{\chi(\rho, \tau,\alpha_r)},
	$$
with $\chi(\rho, \tau, \alpha_j) \in \Z/n_j\Z$ chosen to satisfy 
    $$
      \tau(\omega_j)^{\chi(\rho, \tau, \alpha_j)} = \frac{\rho\left(\sqrt[\leftroot{-3}\uproot{3} n_j]{\rho^{-1}(\tau(\alpha_j))}\right)}{\sqrt[\leftroot{-3}\uproot{3} n_j]{\tau(\alpha_j)}},
    $$
where $\omega_j = \omega^{n/n_j}$ for $j \in \{1,2,\cdots,r\}$. Then the map $\varphi$ is an injective group homomorphism.
\end{theorem}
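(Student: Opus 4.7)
The plan is to verify in turn that $\varphi$ is well defined, is a group homomorphism, and is injective, where the proof of injectivity will crucially invoke Lemma \ref{Galois-closure-2} to force $\rho$ to be trivial on $L^c$.

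First I would establish well-definedness. Fix once and for all a choice of $n_j$-th root $\sqrt[n_j]{\tau(\alpha_j)} \in L^c$ for every pair $(\tau,j) \in \Omega \times \{1,\dots,r\}$; this is possible since all such roots lie in $L^c$ by Lemma \ref{Galois-closure-2}. For $\rho \in \Gal(L^c/F)$, the element $\rho^{-1}(\tau(\alpha_j)) = (\varepsilon(\rho)^{-1}\circ\tau)(\alpha_j)$ is again of the form $\tau''(\alpha_j)$ for $\tau'' \in \Omega$, so $\sqrt[n_j]{\rho^{-1}(\tau(\alpha_j))}$ refers unambiguously to the chosen radical. Then I would verify that $(\rho(\sqrt[n_j]{\rho^{-1}(\tau(\alpha_j))}))^{n_j} = \tau(\alpha_j)$, so the ratio in the definition is an $n_j$-th root of unity; since $\tau(\omega_j)=\tau(\omega)^{n/n_j}$ is a primitive $n_j$-th root of unity in $K^c$, this ratio equals a unique power $\tau(\omega_j)^{\chi(\rho,\tau,\alpha_j)}$ with $\chi(\rho,\tau,\alpha_j) \in \Z/n_j\Z$. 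As $\sigma_\rho(\tau)$ is by construction a product of the $\eta_i \in \Gal(L/K)$, the map $\varphi$ is well-defined into the wreath product $\Gal(L/K)\wr_\Omega \Gal(K^c/F)$.

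Second, for the homomorphism property, set $\rho = \rho_1 \circ \rho_2$ and fix $\tau \in \Omega$ and $j$. Writing $\tau' := \varepsilon(\rho_1)^{-1}\circ\tau$ and $\gamma := \tau'(\alpha_j) = \rho_1^{-1}(\tau(\alpha_j))$, and noting that $\tau'(\omega_j) = \rho_1^{-1}(\tau(\omega_j))$ since $\tau(\omega_j)\in K^c$, the definition of $\chi(\rho_2,\tau',\alpha_j)$ gives
\[
\rho_2\big(\sqrt[n_j]{\rho_2^{-1}(\gamma)}\big) \;=\; \rho_1^{-1}(\tau(\omega_j))^{\chi(\rho_2,\tau',\alpha_j)}\,\sqrt[n_j]{\gamma}.
\]
Applying $\rho_1$ to both sides, using $\rho_2^{-1}(\gamma) = \rho^{-1}(\tau(\alpha_j))$ and the defining relation for $\chi(\rho_1,\tau,\alpha_j)$, I would obtain
\[
\rho\big(\sqrt[n_j]{\rho^{-1}(\tau(\alpha_j))}\big) \;=\; \tau(\omega_j)^{\chi(\rho_1,\tau,\alpha_j)+\chi(\rho_2,\tau',\alpha_j)}\,\sqrt[n_j]{\tau(\alpha_j)},
\]
so $\chi(\rho,\tau,\alpha_j) \equiv \chi(\rho_1,\tau,\alpha_j)+\chi(\rho_2,\tau',\alpha_j)\pmod{n_j}$. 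Since Theorem \ref{Kummer-extensions} presents $\Gal(L/K)$ as the internal direct product $\langle\eta_1\rangle\times\cdots\times\langle\eta_r\rangle$ (in particular abelian), this coordinatewise additivity of exponents translates into $\sigma_{\rho}(\tau) = \sigma_{\rho_1}(\tau)\circ\sigma_{\rho_2}(\tau') = \sigma_{\rho_1}(\tau)\circ\theta_{\varepsilon(\rho_1)}(\sigma_{\rho_2})(\tau)$. Combined with $\varepsilon(\rho_1\circ\rho_2)=\varepsilon(\rho_1)\circ\varepsilon(\rho_2)$, this proves $\varphi(\rho_1\circ\rho_2)=\varphi(\rho_1)\varphi(\rho_2)$.

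Finally, for injectivity, suppose $\varphi(\rho)$ is the identity of the wreath product. Then $\varepsilon(\rho)=\op{id}_{K^c}$ and $\sigma_\rho(\tau) = \op{id}_L$ for every $\tau\in\Omega$; by the direct product decomposition of $\Gal(L/K)$ this forces $\chi(\rho,\tau,\alpha_j)\equiv 0\pmod{n_j}$ for all $\tau,j$. Since $\rho$ fixes $K^c$ pointwise, $\rho^{-1}(\tau(\alpha_j))=\tau(\alpha_j)$, and the defining equation collapses to $\rho(\sqrt[n_j]{\tau(\alpha_j)})=\sqrt[n_j]{\tau(\alpha_j)}$ for all $\tau$ and $j$. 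Thus $\rho$ fixes every element of the set $X$ of Lemma \ref{Galois-closure-2}, and since $L^c = K^c(X)$, we conclude $\rho = \op{id}_{L^c}$. The main technical obstacle will be the bookkeeping of roots of unity in the homomorphism calculation, where $\rho_1$ acts nontrivially on $\tau(\omega_j)\in K^c$ and the fixed choices of radicals must be consistently matched along the composition $\rho_1\circ\rho_2$; getting both the exponent of $\tau(\omega_j)$ and the underlying radical on the right-hand side to line up is the heart of the argument.
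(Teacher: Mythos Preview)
Your proposal is correct and follows essentially the same approach as the paper's proof: well-definedness via the observation that the ratio is an $n_j$-th root of unity expressible as a power of $\tau(\omega_j)$, the homomorphism property via the cocycle identity $\chi(\rho_1\circ\rho_2,\tau,\alpha_j)=\chi(\rho_1,\tau,\alpha_j)+\chi(\rho_2,\varepsilon(\rho_1)^{-1}\circ\tau,\alpha_j)$ combined with the abelianness of $\Gal(L/K)$, and injectivity via Lemma~\ref{Galois-closure-2} and the direct product decomposition. Your treatment is in fact slightly more careful than the paper's in making explicit the fixed choice of radicals and the identification $\rho^{-1}(\tau(\alpha_j))=(\varepsilon(\rho)^{-1}\circ\tau)(\alpha_j)$.
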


\begin{proof}


Let $X := \{ \sqrt[\leftroot{-3}\uproot{3} n_j]{\tau(\alpha_j)} \suchthat \tau \in \op{Hom}_F(K, K^c) \text{ and } j\in\{1,2,\cdots,r\}\} \subseteq K^c$. Then, by Lemma \ref{Galois-closure-2} we have $L^c = K^c(X)$. Now, for each $\rho \in \Gal(L^c/F)$  and $j\in \{1,\cdots,r\}$ we have
\begin{align*}
    \rho\left(\sqrt[\leftroot{-3}\uproot{3} n_j]{\tau(\alpha_j)}\right)^{n_j} = \rho(\tau(\alpha_j)).
\end{align*}
This means that $\rho(\sqrt[\leftroot{-3}\uproot{3} n_j]{\tau(\alpha_j)})$ is an $n_j$-th root of $\rho(\tau(\alpha_j))$. We note that $\omega_j = \omega^{n/n_j}$ is actually a primitive $n_j$-th root of unity in $K$. It follows from this that
\begin{align*}
\rho\left(\sqrt[\leftroot{-3}\uproot{3} n_j]{\tau(\alpha_j)}\right) = \omega_j^{k}\sqrt[\leftroot{-3}\uproot{3} n_j]{\rho(\tau(\alpha_j))}
\end{align*}
for some $k \in \{ 0, 1, \dots, n_j - 1 \}$. Furthermore, we observe that for each $\rho \in \Gal(L^c/F)$ and each $\tau \in \Omega = \operatorname{Hom}_F(K, K^c)$, we have that $(\rho \circ \tau)(\omega_j)$ is also a primitive $n_j$-th root of unity in $K$.\footnote{This is a consequence of the fact that if $f: K_1 \to K_2$ is a homomorphism of fields, then $f$ preserves the multiplicative order of elements in $K_1^{\times}$ 


} Hence, there is a $k' \in \{ 0, 1, \dots, n_j-1 \}$ such that $\omega_j^k = (\rho \circ \tau)(\omega_j)^{k'}$. Thus we have
\begin{align}\label{k_prime_eqn}
\rho\left(\sqrt[\leftroot{-3}\uproot{3} n_j]{\tau(\alpha_j)}\right) = (\rho \circ \tau)(\omega_j)^{k'}\sqrt[\leftroot{-3}\uproot{3} n_j]{\rho(\tau(\alpha_j))}.
\end{align}
Therefore, for each $\rho \in \Gal(L^c/F)$ and $\tau \in \op{Hom}_F(K, K^c)$, using equation \eqref{k_prime_eqn} with $\tau$ replaced by $\rho^{-1} \circ \tau$, we define $\chi(\rho, \tau, \alpha_j) \in \Z/n_j\Z$ so that the equation
\begin{align}\label{chi-definition-equation}
\rho \left( \sqrt[\leftroot{-3}\uproot{3} n_j]{\rho^{-1}(\tau(\alpha_j))} \right) = \tau(\omega_j)^{\chi(\rho, \tau, \alpha_j)}\sqrt[\leftroot{-3}\uproot{3} n_j]{\tau(\alpha_j)}   
\end{align}
is satisfied.

Now, consider the map
\begin{align*}
	\varphi \colon &\Gal(L^c/F) \longrightarrow \Gal(L/K) \wr_{\Omega} \Gal(K^c/F) \\
	&\quad \quad \qquad  \rho \longmapsto (\sigma_{\rho}, \varepsilon(\rho)) \notag
\end{align*}
where $\varepsilon(\rho) = \rho|_{K^c}$ is the restriction of $\rho$ to $K^c$ and $\sigma_{\rho} \colon \op{Hom}_F(K, K^c) \longrightarrow \Gal(L/K)$ is defined for every embedding $\tau \in \op{Hom}_F(K, K^c)$ by
$$
    \sigma_{\rho}(\tau) = \eta_1^{\chi(\rho, \tau, \alpha_1)}\circ \cdots \circ \eta_r^{\chi(\rho, \tau, \alpha_r)}.
$$
Observe that in the wreath product $\Gal(L/K) \wr_{\Omega} \Gal(K^c/F)$, the group $\Gal(K^c/F)$ acts on the set of embeddings $\Omega = \op{Hom}_F(K, K^c)$ by composition, i.e., if $\xi \in \Gal(K^c/F)$ and $\tau \in \Omega$, then the action is given by $\xi \circ \tau$.
We are now ready to prove that $\varphi$ is an injective group homomorphism.

\noindent $\blacklozenge$ \textit{$\varphi$ is a group homomorphism:} Let $\rho_1, \rho_2 \in \Gal(L^c/F)$. We will prove that
$$
\varphi(\rho_1 \circ \rho_2) = \varphi(\rho_1) \varphi(\rho_2).
$$
Note first that 
\begin{align*}
    \varphi(\rho_1\circ \rho_2) &= (\sigma_{\rho_1\circ \rho_2}, \varepsilon(\rho_1 \circ \rho_2))\\
    &= (\sigma_{\rho_1\circ \rho_2},(\rho_1 \circ \rho_2) |_{K^c}) \\
    &= (\sigma_{\rho_1\circ \rho_2}, \rho_1 |_{K^c} \circ \rho_2 |_{K^c}).
\end{align*}
On the other hand, by the definition of multiplication in the wreath product $\Gal(L/K) \wr_{\Omega} \Gal(K^c/F)$, we have
\begin{align*}
    \varphi(\rho_1)\varphi(\rho_2) &= (\sigma_{\rho_1}, \varepsilon(\rho_1))(\sigma_{\rho_2}, \varepsilon(\rho_2)) \\
    &= (\sigma_{\rho_1} \theta_{\varepsilon(\rho_1)}(\sigma_{\rho_2}), \varepsilon(\rho_1) \circ \varepsilon(\rho_2)) \\
    &= (\sigma_{\rho_1} \theta_{\varepsilon(\rho_1)}(\sigma_{\rho_2}), \rho_1|_{K^c} \circ \rho_2|_{K^c}).
\end{align*}
Thus, we need to show that
\begin{align}\label{cyclic-homomorph_main_kummer}
     \sigma_{\rho_1\circ\rho_2} = \sigma_{\rho_1} \theta_{\varepsilon(\rho_1)}(\sigma_{\rho_2}).
\end{align}
Note that both sides of \eqref{cyclic-homomorph_main_kummer} are functions in the group $\Gal(L/K)^{\Omega}$, that is, they are functions $\Omega \longrightarrow \Gal(L/K)$. Hence, in order to show that they are equal, we will show that both sides of (\ref{cyclic-homomorph_main_kummer}) have the same value at each $\tau \in \Omega = \operatorname{Hom}_F(K, K^c)$. 

For the right hand side of (\ref{cyclic-homomorph_main_kummer}), note that
\begin{align}\label{cyclic-homomorph-aux-2-kummer}
    \left(\sigma_{\rho_1} \theta_{\varepsilon(\rho_1)}(\sigma_{\rho_2})\right)(\tau) &= \sigma_{\rho_1}(\tau) \circ \theta_{\varepsilon(\rho_1)}(\sigma_{\rho_2}(\tau)) \notag \\
    &= \sigma_{\rho_1}(\tau) \circ \sigma_{\rho_2}\left(\varepsilon(\rho_1)^{-1}\circ \tau\right) \notag \\
    &= \eta_1^{\chi(\rho_1,\tau, \alpha_1)} \circ \cdots \circ \eta_r^{\chi(\rho_1,\tau, \alpha_r)} \circ \eta_1^{\chi(\rho_2, \varepsilon(\rho_1)^{-1}\circ \tau, \alpha_1)}\circ \eta_r^{\chi(\rho_2, \varepsilon(\rho_1)^{-1}\circ \tau, \alpha_r)} \notag\\
    &= \eta_1^{\chi(\rho_1,\tau, \alpha_1)} \circ \cdots \circ \eta_r^{\chi(\rho_1,\tau, \alpha_r)} \circ \eta_1^{\chi(\rho_2, \rho_1^{-1}\circ \tau, \alpha_1)}\circ \eta_r^{\chi(\rho_2, \rho_1^{-1}\circ \tau, \alpha_r)} \notag \\
     &= \eta_1^{\chi(\rho_1,\tau, \alpha_1) + \chi(\rho_2, \rho_1^{-1}\circ \tau, \alpha_1)} \circ \cdots \circ \eta_r^{\chi(\rho_1,\tau, \alpha_r) + \chi(\rho_2, \rho_1^{-1}\circ \tau, \alpha_r)},
\end{align}
where we used that $\Gal(L/K)$ is abelian. On the other hand, the LHS of \eqref{cyclic-homomorph_main_kummer} evaluated at $\tau$ is given by
$$
\sigma_{\rho_1 \circ \rho_2}(\tau) = \eta_1^{\chi(\rho_1 \circ \rho_2, \tau, \alpha_1)}\circ \cdots \circ \eta_r^{\chi(\rho_1 \circ \rho_2, \tau, \alpha_r)}.
$$
Therefore, we will now prove that 
\begin{align}\label{cocycle-relation}
\chi(\rho_1 \circ \rho_2, \tau, \alpha_j) = \chi(\rho_2, \rho_1^{-1}\circ \tau, \alpha_j) + \chi(\rho_1,\tau, \alpha_j) 
\end{align}
for all $j \in \{1,2,\cdots,r\}$. Indeed, note that for each $j \in \{1,2,\cdots,r\}$ we have
\begin{align*}
    (\rho_1 \circ \rho_2) \left(\sqrt[\leftroot{-3}\uproot{3} n_j]{(\rho_1\circ \rho_2)^{-1}(\tau(\alpha_j))}\right) 
    &= \rho_1 \left[ \rho_2 \left( \sqrt[\leftroot{-3}\uproot{3} n_j]{\rho_2^{-1}\left((\rho_1^{-1}\circ \tau)(\alpha_j)\right)}\right)\right] \\
    &= \rho_1 \left[ (\rho_1^{-1}\circ \tau)(\omega_j)^{\chi(\rho_2, \rho_1^{-1}\circ \tau, \alpha_j)}  \left(\sqrt[\leftroot{-3}\uproot{3} n_j]{(\rho_1^{-1}\circ \tau)(\alpha_j)}\right)\right]\\
    & = \tau(\omega_j)^{\chi(\rho_2, \rho_1^{-1}\circ \tau, \alpha_j)}  \rho_1 \left(\sqrt[\leftroot{-3}\uproot{3} n_j]{\rho_1^{-1}( \tau(\alpha_j))}\right)\\
    &= \tau(\omega_j)^{\chi(\rho_2, \rho_1^{-1}\circ \tau, \alpha_j)}  \tau(\omega_j)^{\chi(\rho_1,\tau, \alpha_j)}\sqrt[\leftroot{-3}\uproot{3} n_j]{\tau(\alpha_j)} \\
    &= \tau(\omega_j)^{\chi(\rho_2, \rho_1^{-1}\circ \tau, \alpha_j) + \chi(\rho_1,\tau, \alpha_j) }\sqrt[\leftroot{-3}\uproot{3} n_j]{\tau(\alpha_j)}.
\end{align*}
Using the defining equation \eqref{chi-definition-equation} for $\chi(\rho_1 \circ \rho_2, \tau, \alpha_j)$,  the previous calculation implies that
$$
\chi(\rho_1 \circ \rho_2, \tau, \alpha_j) = \chi(\rho_2, \rho_1^{-1}\circ \tau, \alpha_j) + \chi(\rho_1,\tau, \alpha_j).
$$
Hence, from this we conclude that
\begin{align}\label{cyclic-homomorph-aux-1-kummer}
\sigma_{\rho_1 \circ \rho_2}(\tau) = \eta_1^{\chi(\rho_2, \rho_1^{-1}\circ \tau, \alpha_1) + \chi(\rho_1,\tau, \alpha_1)}\circ \cdots \circ \eta_r^{\chi(\rho_2, \rho_1^{-1}\circ \tau, \alpha_r) + \chi(\rho_1,\tau, \alpha_r)}.
\end{align}

Therefore, comparing equations (\ref{cyclic-homomorph-aux-1-kummer}) and (\ref{cyclic-homomorph-aux-2-kummer}), we see that (\ref{cyclic-homomorph_main_kummer}) holds, thus completing the proof that $\varphi(\rho_1 \circ \rho_2) = \varphi(\rho_1) \varphi(\rho_2)$.
 
\noindent $\blacklozenge$ \textit{$\varphi$ is injective:} Since we already know that $\varphi$ is a group homomorphism, to prove injectivity we will show that $\ker{\varphi} = \{ \op{id}_{L^c} \}$. Thus, let $\rho \in \ker{\varphi}$. Hence $\rho \in \Gal(L^c/F)$ and $\varphi(\rho) = (\sigma_{\rho}, \varepsilon(\rho))$ is the identity in $\Gal(L/K)\wr_{\Omega} \Gal(K^c/F)$. In other words, this means that $\varepsilon(\rho) = \rho|_{K^c} = \op{id}_{K^c}$ and that $\sigma_{\rho}$ is the identity element in $\Gal(L/K)^{\Omega}$, that is, that for every $\tau \in \Omega = \op{Hom}_F(K, K^c)$ we have
$$
\sigma_{\rho}(\tau) = \eta_1^{\chi(\rho, \tau, \alpha_1)} \circ \cdots \circ \eta_r^{\chi(\rho, \tau, \alpha_r)} = \op{id}_{L}.
$$
Now, recall that $L^c = K^c(X)$, where $$X = \left \{ \sqrt[\leftroot{-3}\uproot{3} n_j]{\tau(\alpha_j)} \suchthat \tau \in \op{Hom}_F(K, K^c) \text{ and } j\in\{1,\cdots,r\}  \right \}.$$ 
This implies that the automorphism $\rho \in \Gal(L^c/F)$ is completely determined by its action on the elements of $K^c$ and on the elements of the generating set $X$. We want to prove that $\rho = \operatorname{id}_{L^c}$. We already know that $\rho|_{K^c} = \rho = \operatorname{id}_{K^c}$. Hence, it suffices to show that $\rho$ acts as the identity map on the elements of the generating set $X$.

To do this, first remember that $\Gal(L/K) = \langle \eta_1 \rangle \cdots \langle \eta_r \rangle \cong \langle \eta_1 \rangle \times \cdots \times \langle \eta_r \rangle$. Since this is a direct product, this implies that $\eta_j^{\chi(\rho, \tau, \alpha_j)} = \op{id}_L$ for each $j \in \{1,\cdots,r\}$. Then, because $\chi(\rho,\tau,\alpha_j) \in \Z/n_j\Z$ and the order of $\eta_j$ in $\Gal(L/K)$ equals $n_j$, this gives us $\chi(\rho,\tau,\alpha_j) = 0$ for every $j$ and every $\tau$. Hence, from the definition of $\chi(\rho, \tau, \alpha_j)$, it follows that
$$
\rho(\sqrt[\leftroot{-3}\uproot{3} n_j]{\rho^{-1}(\tau(\alpha_j))}) = \tau(\omega_j)^{\chi(\rho, \tau, \alpha_j)}\sqrt[\leftroot{-3}\uproot{3} n_j]{\tau(\alpha_j)} = \sqrt[\leftroot{-3}\uproot{3} n_j]{\tau(\alpha_j)}.
$$
Moreover, since $\rho|_{K^c} = \op{id}_{K^c}$ and $\tau(\alpha_j) \in K^c$, it follows that $\rho^{-1}(\tau(\alpha_j)) = \tau(\alpha_j)$. Therefore, this implies that
$$
\rho\left(\sqrt[\leftroot{-3}\uproot{3} n_j]{\tau(\alpha_j)}\right) =  \sqrt[\leftroot{-3}\uproot{3} n_j]{\tau(\alpha_j)}
$$
for every $\tau \in \op{Hom}_F(K, K^c)$. Thus, this proves that $\rho$ acts as the identity on the elements of the generating set $X$.

Hence, we conclude that $\rho = \op{id}_{L^c}$. Consequently, $\ker{\varphi} = \{ \op{id}_{L^c} \}$ and this completes the proof that $\varphi$ is injective.
\end{proof}


\section{Examples of Theorems \ref{Main-Theorem}, \ref{Main-Theorem-2} and \ref{Kummer-embedding}}\label{Examples-section}

In this section we consider some concrete examples of the embeddings from Theorems \ref{Main-Theorem}, \ref{Main-Theorem-2} and \ref{Kummer-embedding}.

\begin{example}[Example of Theorem \ref{Main-Theorem}]\label{Biquadratic-example}
	Consider the number fields $K = \Q(\sqrt{5})$ and $L = K(\sqrt{7}) = \Q(\sqrt{5}, \sqrt{7})$. Then both extensions $K/\Q$ and $L/\Q$ are Galois and $L = \Q(\sqrt{5}, \sqrt{7})$ is a biquadratic number field. Thus we have $K^c = K$ and $L^c = L$ and therefore we have the short exact sequence
	$$
		1 \longrightarrow \Gal(L/K) \overset{\iota}{\longrightarrow} \Gal(L/\Q) \overset{\varepsilon}{\longrightarrow} \Gal(K/\Q) \longrightarrow 1,
	$$
	
	The three Galois groups we will consider are given as follows. First, for the extension $K/\Q$ we have $\Gal(K/\Q) = \{ \op{id}_K, \eta  \}$, where $\eta \colon K \to K$ is given by $\eta(a + b\sqrt{5}) = a - b\sqrt{5}$ for every $a, b \in \Q$. 
	
	Now, for the biquadratic extension $L/\Q$, the Galois group is given by $\Gal(L/\Q) = \{ \op{id}_L, \rho_1, \rho_2, \rho_3 \}$, where the automorphisms $\rho_i \colon L \to L$ are given by
	\begin{align*}
		\rho_1 &\colon \sqrt{5} \mapsto -\sqrt{5}, \quad \sqrt{7}
			\mapsto \sqrt{7},\\
		\rho_2 &\colon \sqrt{5} \mapsto \sqrt{5}, \quad -\sqrt{7} \mapsto \sqrt{7},\\
		\rho_3 &\colon \sqrt{5} \mapsto -\sqrt{5}, \quad \sqrt{7} \mapsto -\sqrt{7}.
	\end{align*}
	We note that $\Gal(L/\Q) \simeq \Z/2\Z \oplus \Z/2\Z$ and in particular we have $\rho_1^2 = \rho_2^2 = \rho_3^2 = \op{id}_L$ and $\rho_1 \circ \rho_2 = \rho_3$, $\rho_2 \circ \rho_3 = \rho_1$ and $\rho_1 \circ \rho_3 = \rho_2$. On the other hand, for the extension $L/K$ the Galois group is $\Gal(L/K) = \{ \op{id}_L, \rho_2 \}$.
		
	With this, we can describe the elements of the regular wreath product 
	$$
	\Gal(L/K) \wr_r \Gal(K/\Q) = \Gal(L/K)^{\Gal(K/\Q)} \rtimes_{\theta} \Gal(K/\Q),
	$$
	which is a group of order $|\Gal(L/K)|^{|\Gal(K/\Q)|} \cdot  |\Gal(K/\Q)| = 2^2 \cdot 2 = 8$.

	First, the group $\Gal(L/K)^{\Gal(K/\Q)}$ of functions $f \colon \Gal(K/\Q) \longrightarrow \Gal(L/K)$ has four elements, which are given by
	\begin{align*}
		f_1 &\colon \op{id}_K \mapsto \op{id}_L, \quad \eta \mapsto \op{id}_L,\\
		f_2 &\colon \op{id}_K \mapsto \op{id}_L, \quad \eta \mapsto \rho_2, \\
		f_3 &\colon \op{id}_K \mapsto \rho_2, \quad \eta \mapsto \op{id}_L,\\
		f_4 &\colon \op{id}_K \mapsto \rho_2, \quad \eta \mapsto \rho_2.
	\end{align*}
	We observe that $\Gal(L/K)^{\Gal(K/\Q)} = \{ f_1, f_2, f_3, f_4 \} \simeq \Z/2\Z \oplus \Z/2\Z$ with $f_2^2 = f_3^2 = f_4^2 = f_1$ and $f_2 f_3 = f_4$, $f_2 f_4 = f_3$ and $f_3 f_4 = f_2$.
	
	Therefore, as a set, we have
	$$
	\Gal(L/K) \wr_r \Gal(K/\Q) = \{ (f_1, \op{id}_K), (f_2, \op{id}_K), (f_3, \op{id}_K), (f_4, \op{id}_K), (f_1, \eta), (f_2, \eta), (f_3, \eta), (f_4, \eta) \}.
	$$
	
	Now, recall that the surjective map $\varepsilon \colon \Gal(L/\Q) \longrightarrow \Gal(K/\Q)$ is given by $\varepsilon(\rho) = \rho|_K$ for every $\rho \in \Gal(L/\Q)$. Therefore, we have $\varepsilon(\op{id}_L) = \varepsilon(\rho_2) = \op{id}_K$ and $\varepsilon(\rho_1) = \varepsilon(\rho_3) = \eta$. In order to define the embedding (\ref{Galois-embedding-2}) from Theorem \ref{Main-Theorem} we need a right inverse $s$ of $\varepsilon$. Thus, consider the map $s \colon \Gal(K/\Q) \longrightarrow \Gal(L/\Q)$ given by $s(\op{id}_K) = \op{id}_L$ and $s(\eta) = \rho_1$. Then we have $\varepsilon \circ s = \op{id}_{\Gal(K/\Q)}$. Recall that the embedding  from Theorem \ref{Main-Theorem} is given by the map 
	\begin{align*}
		\varphi \colon &\Gal(L/\Q) \longrightarrow \Gal(L/K) \wr_r \Gal(K/\Q) \\
		&\quad \quad \qquad  \rho \longmapsto (\sigma_{\rho}, \varepsilon(\rho))
	\end{align*}
	where $\sigma_{\rho} \colon \Gal(K/\Q) \longrightarrow \Gal(L/K)$ is defined by 
	$$
	\sigma_{\rho}(\tau) := s(\tau)^{-1} \circ \rho \circ s \big( \varepsilon(\rho)^{-1} \circ \tau \big)
	$$ 
	for every $\tau \in \Gal(K/\Q)$. We now compute explicitly the maps $\sigma_{\rho}$ for every $\rho \in \Gal(L/\Q)$. For example, for $\sigma_{\op{id}_L}$ we have
	
	\begin{align*}
		\sigma_{\op{id}_L}(\op{id}_K) = s(\op{id}_K)^{-1} \circ \op{id_{L}} \circ s \big( \varepsilon(\op{id}_L)^{-1} \circ \op{id}_K  \big)	= \op{id}_L^{-1} \circ s \big( \op{id}_K^{-1} \big) = \op{id}_L \circ \op{id}_L = \op{id}_L
 	\end{align*}
 	and similarly
 	\begin{align*}
		\sigma_{\op{id}_L}(\eta) = s(\eta)^{-1} \circ \op{id_{L}} \circ s \big( \varepsilon(\op{id}_L)^{-1} \circ \eta  \big)	= \rho_1^{-1} \circ s \big( \op{id}_K^{-1} \circ \eta \big) = \rho_1 \circ s(\eta) = \rho_1 \circ \rho_1 = \rho_1^2 = \op{id}_L.
 	\end{align*}
 	This shows that $\sigma_{\op{id}_L} = f_1$. Analogous calculations show that $\sigma_{\rho_1} = f_1$, $\sigma_{\rho_2} = f_4$ and $\sigma_{\rho_3} = f_4$. Therefore, the embedding $\varphi \colon \Gal(L/\Q) \longrightarrow \Gal(L/K) \wr_r \Gal(K/\Q)$ is given by
 	 	
 	\begin{align*}
 		\varphi(\op{id}_L) &= (\sigma_{\op{id}_L}, \varepsilon(\op{id}_L)) = (f_1, \op{id}_K),\\
 		\varphi(\rho_1) &= (\sigma_{\rho_1}, \varepsilon(\rho_1)) = (f_1, \eta),\\
 		\varphi(\rho_2) &= (\sigma_{\rho_2}, \varepsilon(\rho_2)) = (f_4, \op{id}_K),\\
 		\varphi(\rho_3) &= (\sigma_{\rho_3}, \varepsilon(\rho_3)) = (f_4, \eta).
 	\end{align*}
	
	In order to look at the group structure of the wreath product we can use the isomorphisms $\Gal(L/K) \simeq C_2$ and $\Gal(K/\Q) \simeq C_2$, as well as Corollary \ref{Regular-wreath-isomorphism} to conclude that $$\Gal(L/K) \wr_r \Gal(K/\Q) \simeq C_2 \wr_r C_2.$$ Then, from the calculations carried out in Example \ref{example-c2-regular-c2} we conclude that this wreath product is isomorphic to $D_4$. However, we can determine this structure manually as well. First we recall from Proposition \ref{theta-homomorphism} that the homomorphism $\theta$ is given by
	
	\begin{align*}
	\theta \colon &\Gal(K/\Q) \longrightarrow \op{Aut}(\Gal(L/K)^{\Gal(K/\Q)})\\
	& \qquad \qquad h \longmapsto \theta_{h} 
\end{align*}
where $\theta_{h} \colon \Gal(L/K)^{\Gal(K/\Q)} \longrightarrow \Gal(L/K)^{\Gal(K/\Q)}$ is the automorphism defined by $$\theta_h(f)(\tau) = f(h^{-1} \circ \tau)$$ for every $f \in \Gal(L/K)^{\Gal(K/\Q)}$ and every $\tau \in \Gal(K/\Q)$. From this we can compute $\theta_{\op{id}_K}(f)(\tau) = f(\op{id}_K^{-1} \circ \tau) = f(\tau)$ for every $f \in \Gal(L/K)^{\Gal(K/\Q)}$, so that $\theta_{\op{id}_K}$ is the identity automorphism in $\op{Aut}(\Gal(L/K)^{\Gal(K/\Q)})$. For the other automorphism we have 
$$
\theta_{\eta}(f)(\tau) = f(\eta^{-1} \circ \tau) = f(\eta \circ \tau).
$$
Therefore, we see that 
\begin{align*}
	&\theta_{\eta}(f_1) \colon \op{id}_K \mapsto f_1(\eta \circ \op{id}_K) = f_1(\eta) = \op{id}_L \qquad \eta \mapsto f_1(\eta \circ \eta ) = f_1(\op{id}_K) = \op{id}_L,\\
	&\theta_{\eta}(f_2) \colon \op{id}_K \mapsto f_2(\eta \circ \op{id}_K) = f_2(\eta) = \rho_1 \qquad \eta \mapsto f_2(\eta \circ \eta ) = f_2(\op{id}_K) = \op{id}_L,\\
	&\theta_{\eta}(f_3) \colon \op{id}_K \mapsto f_3(\eta \circ \op{id}_K) = f_3(\eta) = \op{id}_L \qquad \eta \mapsto f_3(\eta \circ \eta ) = f_3(\op{id}_K) = \rho_1,\\
	&\theta_{\eta}(f_4) \colon \op{id}_K \mapsto f_4(\eta \circ \op{id}_K) = f_4(\eta) = \rho_1 \qquad \eta \mapsto f_4(\eta \circ \eta ) = f_4(\op{id}_K) = \rho_1.\\
\end{align*}
This shows that 
$$
\theta_{\eta} \colon f_1 \mapsto f_1, \quad f_2 \mapsto f_3, \quad f_3 \mapsto f_2, \quad f_4 \mapsto f_4.
$$
Thus $\theta_{\eta}$ just permutes $f_2$ and $f_3$ and leaves $f_1$ and $f_4$ fixed. Using this fact, if we define the elements $r:=(f_2,\eta)$ and $s:=(f_1,\eta)$ of $\Gal(L/K) \wr_r \Gal(K/\Q)$, it is a short calculation to verify that $r$ and $s$ are generators of this group, and moreover that $\text{ord}(r)=4, \text{ord}(s)=2$ and that $sr=r^{-1}s$, thus proving that the wreath product is isomorphic to the dihedral group $D_4$, as was already mentioned above as a consequence of Corollary \ref{Regular-wreath-isomorphism} and Example \ref{example-c2-regular-c2}.
\end{example}

\begin{example}[A Kummer extension]\label{Kummer-example}
Let $K := \Q(\alpha)$, where $\alpha$ is a root of the irreducible polynomial $p(x):= x^6 - x^5 - x^2 - x + 1 \in \Q[x]$. Then $K/\Q$ is a degree 6 extension with discriminant $d_K = -11691 = -3^3 \cdot 433$. Thus, the only rational primes that ramify in $K$ are 3 and 433. This number field corresponds to the entry \cite[\href{https://www.lmfdb.org/NumberField/6.0.11691.1}{Number field 6.0.11691.1}]{LMFDB} in the \texttt{LMFDB} database. It turns out that the only intermediate field of the extension $\Q \subseteq K$ is the quadratic field $\Q(\sqrt{-3})$. In particular, this implies that $K$ contains the 6-th roots of unity $\{ \pm 1, \frac{1}{2}(\pm 1 \pm \sqrt{-3})  \}$. Moreover, the Galois closure $K^c$ of $K/\Q$ is a number field of degree $[K^c : \Q] = 72$ and $\Gal(K^c/\Q) \simeq S_3 \wr_{\Omega_K} C_2 = S_3^2 \rtimes C_2$, where $\Omega_K = \{ 1, 2 \}$ and $C_2$ acts on $\Omega_K$ by multiplication and reduction modulo $2$ by using the elements $1$ and $2$ as representatives of the residue classes.

Now, let $L := K(\sqrt{2}, \sqrt[3]{5})$. Then by Theorem \ref{Kummer-extensions} we know that $L/K$ is a 6-Kummer extension. This means that $L/K$ is an abelian extension with abelian Galois group of exponent 6. For example, in \texttt{SageMath} \cite{SageMath} the number field $L$ can be defined with the following commands.
\begin{lstlisting}[language=Python]
#We define the number field L
M = NumberField([x^6 - x^5 - x^2 - x + 1, x^2 - 2, x^3 - 5], 'a, b, c')
L.<u> = M.absolute_field()
\end{lstlisting}
This number field has discriminant $d_L = 2^{54} \cdot 3^{36} \cdot 5^{24} \cdot 359^{6} \cdot 443^{6}$ and moreover $[L:\Q] = 36$. Therefore, by the tower law we know that $[L : K] = 6$. Since the extension $L/K$ is abelian (and thus Galois) of degree 6, and since there is only one abelian group of order 6, we know that $\Gal(L/K) \simeq C_6$. 

Then, we tried computing directly the Galois closure $L^c$ of $L/\Q$ in \texttt{SageMath} with the command \texttt{galois\_closure()}, but the computation was very heavy and it ended up exceeding the default stack space used in the algorithm. Also, the number field $L$ still does not appear in the \texttt{LMFDB} database, as a search for number fields of degree 36 with discriminant $d_L$ turned up empty. Thus, we instead compute the degree $[L^c:\Q]$ by using Lemma \ref{Galois-closure-2} as follows. A direct application of that lemma implies that 
$$
L^c = K^c(\sqrt{2}, \sqrt[3]{5}).
$$
Now, as was mentioned above, the rational primes that ramify in $K$ are 3 and 433 and it is known that a prime ramifies in $K$ if and only if it ramifies in the Galois closure $K^c$; this follows from the fact that the Galois closure $K^c$ is the compositum of all the Galois conjugate fields of $K$ and that if a prime $p \in \Z$ is unramified in the number fields $K_1$ and $K_2$, it is also unramified in their compositum $K_1K_2$ (see e.g. \cite[p. 159, Corollary 1]{Nar04}). Therefore the only ramified primes in $K^c$ are 3 and 433. This implies that $\sqrt{2} \notin K^c$ and $\sqrt[3]{5} \notin K^c$ because otherwise 2 or 5 would be ramified in $K^c$. Therefore, this implies that 
$$
[L^c : \Q] = [K^c(\sqrt{2}, \sqrt[3]{5}):\Q] = [K^c(\sqrt{2}, \sqrt[3]{5}):K^c] [K^c:\Q] = 6 \cdot 72 = 432.
$$
Then, by Theorem \ref{Kummer-embedding} we know that $\Gal(L^c/\Q)$, which is a group of order $432$, can be embedded into the wreath product 
\begin{align}\label{Kummer-example-wreath}
\Gal(L/K) \wr_{\Omega} \Gal(K^c/\Q) \simeq C_6 \wr_{\widehat{\Omega}} (S_3 \wr_{\Omega_K} C_2),  
\end{align}
where $\Omega := \operatorname{Hom}_{\Q}(K, K^c)$ is the set of embeddings, which has 6 elements, and similarly $\widehat{\Omega} = \{ 1, 2, 3, 4, 5, 6 \}$.
Note that this wreath product has order 
$$
|\Gal(L/K) \wr_{\Omega} \Gal(K^c/\Q)| = |\Gal(L/K)|^{|\Omega|} \cdot |\Gal(K^c/\Q)| = 6^{6} \cdot 72 = \num{3359232} = 2^9 \cdot 3^8.
$$ 

Finally, observe that Theorem \ref{Main-Theorem-2} gives an embedding of $\Gal(L^c/\Q)$ into the wreath product 
$$
\Gal(L^c/K) \wr_{\Omega} \Gal(K^c/\Q),
$$
which has order $|\Gal(L^c/K) \wr_{\Omega} \Gal(K^c/\Q)| = |\Gal(L^c/K)|^{|\Omega|} \cdot |\Gal(K^c/\Q)| = \left(  \frac{432}{6}  \right)^6 \cdot 72 = \num{10030613004288} = 2^{21} \cdot 3^{14}$. To compare, this is a group that is about 40 million times bigger than the wreath product \eqref{Kummer-example-wreath}! 

Thus, this example shows, in a very striking fashion, how the embedding from Theorem \ref{Kummer-embedding} is significantly sharper than the one from Theorem \ref{Main-Theorem-2}.
\end{example}



\bibliography{wreath.bib}

\begin{thebibliography}{AGHMP18}

\bibitem[AGHMP18]{AGHM18}
Fabrizio Andreatta, Eyal~Z. Goren, Benjamin Howard, and Keerthi Madapusi~Pera.
\newblock Faltings heights of abelian varieties with complex multiplication.
\newblock {\em Ann. of Math. (2)}, 187(2):391--531, 2018.
\newblock \href {https://doi.org/10.4007/annals.2018.187.2.3}
  {\path{doi:10.4007/annals.2018.187.2.3}}.

\bibitem[Alb21]{Alb21}
Brandon Alberts.
\newblock Statistics of the first {G}alois cohomology group: a refinement of
  {M}alle's conjecture.
\newblock {\em Algebra Number Theory}, 15(10):2513--2569, 2021.
\newblock \href {https://doi.org/10.2140/ant.2021.15.2513}
  {\path{doi:10.2140/ant.2021.15.2513}}.

\bibitem[BSM18]{BSM18}
Adrian Barquero-Sanchez and Riad Masri.
\newblock On the {C}olmez conjecture for non-abelian {CM} fields.
\newblock {\em Res. Math. Sci.}, 5(1):Paper No. 10, 41, 2018.
\newblock On the topical collection Modular forms are everywhere: celebration
  of Don Zagier’s 65th birthday.
\newblock \href {https://doi.org/10.1007/s40687-018-0119-3}
  {\path{doi:10.1007/s40687-018-0119-3}}.

\bibitem[BSMT23]{BSMT17}
Adrian Barquero-Sanchez, Riad Masri, and Frank Thorne.
\newblock The distribution of ${G}$-{W}eyl {CM} fields and the {C}olmez
  conjecture.
\newblock {\em Res. Math. Sci.}, 10:Paper No. 25, 25, 2023.
\newblock \href {https://doi.org/https://doi.org/10.1007/s40687-023-00389-9}
  {\path{doi:https://doi.org/10.1007/s40687-023-00389-9}}.

\bibitem[CO12]{CO12}
Ching-Li Chai and Frans Oort.
\newblock Abelian varieties isogenous to a {J}acobian.
\newblock {\em Ann. of Math. (2)}, 176(1):589--635, 2012.
\newblock \href {https://doi.org/10.4007/annals.2012.176.1.11}
  {\path{doi:10.4007/annals.2012.176.1.11}}.

\bibitem[Col93]{Col93}
Pierre Colmez.
\newblock P\'{e}riodes des vari\'{e}t\'{e}s ab\'{e}liennes \`a multiplication
  complexe.
\newblock {\em Ann. of Math. (2)}, 138(3):625--683, 1993.
\newblock \href {https://doi.org/10.2307/2946559} {\path{doi:10.2307/2946559}}.

\bibitem[Cox12]{Cox12}
David~A. Cox.
\newblock {\em Galois theory}.
\newblock Pure and Applied Mathematics (Hoboken). John Wiley \& Sons, Inc.,
  Hoboken, NJ, second edition, 2012.
\newblock \href {https://doi.org/10.1002/9781118218457}
  {\path{doi:10.1002/9781118218457}}.

\bibitem[Cox22]{Cox22}
David~A. Cox.
\newblock {\em Primes of the form {$x^2+ny^2$}---{F}ermat, class field theory,
  and complex multiplication}.
\newblock AMS Chelsea Publishing, Providence, RI, 2022.
\newblock Third edition with solutions, with contributions by Roger Lipsett.

\bibitem[DM96]{DM96}
John~D. Dixon and Brian Mortimer.
\newblock {\em Permutation groups}, volume 163 of {\em Graduate Texts in
  Mathematics}.
\newblock Springer-Verlag, New York, 1996.
\newblock \href {https://doi.org/10.1007/978-1-4612-0731-3}
  {\path{doi:10.1007/978-1-4612-0731-3}}.

\bibitem[dS07]{BSmit07}
Bart de~Smit.
\newblock Galois groups and wreath products, 2007.
\newblock Unpublished note, retrieved on 03-18-2023 from the author's personal
  webpage.
\newblock URL: \url{https://www.math.leidenuniv.nl/~smitbde/notes/krans.pdf}.

\bibitem[EL22]{EL22}
A.~Esterov and L.~Lang.
\newblock Sparse polynomial equations and other enumerative problems whose
  {G}alois groups are wreath products.
\newblock {\em Selecta Math. (N.S.)}, 28(2):Paper No. 22, 35, 2022.
\newblock \href {https://doi.org/10.1007/s00029-021-00741-3}
  {\path{doi:10.1007/s00029-021-00741-3}}.

\bibitem[Gow86]{Gow86}
R.~Gow.
\newblock Construction of some wreath products as {G}alois groups of normal
  real extensions of the rationals.
\newblock {\em J. Number Theory}, 24(3):360--372, 1986.
\newblock \href {https://doi.org/10.1016/0022-314X(86)90039-9}
  {\path{doi:10.1016/0022-314X(86)90039-9}}.

\bibitem[Hol78]{Hol78}
D.~F. Holt.
\newblock Embeddings of group extensions into wreath products.
\newblock {\em Quart. J. Math. Oxford Ser. (2)}, 29(116):463--468, 1978.
\newblock \href {https://doi.org/10.1093/qmath/29.4.463}
  {\path{doi:10.1093/qmath/29.4.463}}.

\bibitem[Hoo02]{Hoo02}
Julian Hook.
\newblock Uniform {T}riadic {T}ransformations.
\newblock {\em Journal of Music Theory}, 46(1/2):57--126, 2002.
\newblock URL: \url{http://www.jstor.org/stable/4147678}.

\bibitem[Hun80]{Hun80}
Thomas~W. Hungerford.
\newblock {\em Algebra}, volume~73 of {\em Graduate Texts in Mathematics}.
\newblock Springer-Verlag, New York-Berlin, 1980.
\newblock Reprint of the 1974 original.

\bibitem[KK51a]{KK51a}
Marc Krasner and L\'{e}o Kaloujnine.
\newblock Produit complet des groupes de permutations et probl\`eme de groupes.
  {II}.
\newblock {\em Acta Sci. Math. (Szeged)}, 14:39--66, 1951.

\bibitem[KK51b]{KK51b}
Marc Krasner and L\'{e}o Kaloujnine.
\newblock Produit complet des groupes de permutations et probl\`eme d'extension
  de groupes. {III}.
\newblock {\em Acta Sci. Math. (Szeged)}, 14:69--82, 1951.

\bibitem[Kl{\"u}05]{Klu05}
J\"{u}rgen Kl{\"u}ners.
\newblock A counterexample to {M}alle's conjecture on the asymptotics of
  discriminants.
\newblock {\em C. R. Math. Acad. Sci. Paris}, 340(6):411--414, 2005.
\newblock \href {https://doi.org/10.1016/j.crma.2005.02.010}
  {\path{doi:10.1016/j.crma.2005.02.010}}.

\bibitem[Kl{\"u}12]{Klu12}
J\"{u}rgen Kl{\"u}ners.
\newblock The distribution of number fields with wreath products as {G}alois
  groups.
\newblock {\em Int. J. Number Theory}, 8(3):845--858, 2012.
\newblock \href {https://doi.org/10.1142/S1793042112500492}
  {\path{doi:10.1142/S1793042112500492}}.

\bibitem[Lan83]{Lan83}
Serge Lang.
\newblock {\em Complex multiplication}, volume 255 of {\em Grundlehren der
  mathematischen Wissenschaften [Fundamental Principles of Mathematical
  Sciences]}.
\newblock Springer-Verlag, New York, 1983.
\newblock \href {https://doi.org/10.1007/978-1-4612-5485-0}
  {\path{doi:10.1007/978-1-4612-5485-0}}.

\bibitem[{LMF}23]{LMFDB}
The {LMFDB Collaboration}.
\newblock The {L}-functions and modular forms database.
\newblock \url{https://www.lmfdb.org}, 2023.
\newblock [Online; accessed 4 April 2023].

\bibitem[Mal02]{Mal02}
Gunter Malle.
\newblock On the distribution of {G}alois groups.
\newblock {\em J. Number Theory}, 92(2):315--329, 2002.
\newblock \href {https://doi.org/10.1006/jnth.2001.2713}
  {\path{doi:10.1006/jnth.2001.2713}}.

\bibitem[Mal04]{Mal04}
Gunter Malle.
\newblock On the distribution of {G}alois groups. {II}.
\newblock {\em Experiment. Math.}, 13(2):129--135, 2004.
\newblock URL: \url{http://projecteuclid.org/euclid.em/1090350928}.

\bibitem[Mel95]{Mel95}
J.~D.~P. Meldrum.
\newblock {\em Wreath products of groups and semigroups}, volume~74 of {\em
  Pitman Monographs and Surveys in Pure and Applied Mathematics}.
\newblock Longman, Harlow, 1995.

\bibitem[Mil17]{Mil17}
James~S. Milne.
\newblock Introduction to {S}himura {V}arieties, 2017.
\newblock Expository notes, revised version 2017. Retrieved on 04-04-2023 from
  the author's personal webpage.
\newblock URL: \url{https://www.jmilne.org/math/xnotes/svi.html}.

\bibitem[Mil20]{Mil20}
James~S. Milne.
\newblock Complex {M}ultiplication, 2020.
\newblock Course notes, v0.10 July 14, 2020. Retrieved on 04-04-2023 from the
  author's personal webpage.
\newblock URL: \url{https://www.jmilne.org/math/CourseNotes/cm.html}.

\bibitem[MM18]{MM18}
Gunter Malle and B.~Heinrich Matzat.
\newblock {\em Inverse {G}alois theory}.
\newblock Springer Monographs in Mathematics. Springer, Berlin, 2018.
\newblock Second edition.

\bibitem[Mor96]{Mor96}
Patrick Morandi.
\newblock {\em Field and {G}alois theory}, volume 167 of {\em Graduate Texts in
  Mathematics}.
\newblock Springer-Verlag, New York, 1996.
\newblock \href {https://doi.org/10.1007/978-1-4612-4040-2}
  {\path{doi:10.1007/978-1-4612-4040-2}}.

\bibitem[Nar04]{Nar04}
W\l adys\l~aw Narkiewicz.
\newblock {\em Elementary and analytic theory of algebraic numbers}.
\newblock Springer Monographs in Mathematics. Springer-Verlag, Berlin, third
  edition, 2004.
\newblock \href {https://doi.org/10.1007/978-3-662-07001-7}
  {\path{doi:10.1007/978-3-662-07001-7}}.

\bibitem[Obu13]{Obu13}
Andrew Obus.
\newblock On {C}olmez's product formula for periods of {CM}-abelian varieties.
\newblock {\em Math. Ann.}, 356(2):401--418, 2013.
\newblock \href {https://doi.org/10.1007/s00208-012-0855-4}
  {\path{doi:10.1007/s00208-012-0855-4}}.

\bibitem[Pec09]{Pec09}
Robert~W. Peck.
\newblock Wreath {P}roducts in {T}ransformational {M}usic {T}heory.
\newblock {\em Perspectives of New Music}, 47(1):193--210, 2009.
\newblock URL: \url{http://www.jstor.org/stable/25652406}.

\bibitem[Pin18]{Pinto18}
Matteo Pintonello.
\newblock Characterizing number fields with quadratic {$L$}-functions.
\newblock Master's thesis, Universita degli studi di Padova - Universiteit
  Leiden, ALGANT Master Program, 2018.

\bibitem[Rob96]{Rob96}
Derek J.~S. Robinson.
\newblock {\em A course in the theory of groups}, volume~80 of {\em Graduate
  Texts in Mathematics}.
\newblock Springer-Verlag, New York, second edition, 1996.
\newblock \href {https://doi.org/10.1007/978-1-4419-8594-1}
  {\path{doi:10.1007/978-1-4419-8594-1}}.

\bibitem[Rot95]{Rot95}
Joseph~J. Rotman.
\newblock {\em An introduction to the theory of groups}, volume 148 of {\em
  Graduate Texts in Mathematics}.
\newblock Springer-Verlag, New York, fourth edition, 1995.
\newblock \href {https://doi.org/10.1007/978-1-4612-4176-8}
  {\path{doi:10.1007/978-1-4612-4176-8}}.

\bibitem[Sil94]{Sil94}
Joseph~H. Silverman.
\newblock {\em Advanced topics in the arithmetic of elliptic curves}, volume
  151 of {\em Graduate Texts in Mathematics}.
\newblock Springer-Verlag, New York, 1994.
\newblock \href {https://doi.org/10.1007/978-1-4612-0851-8}
  {\path{doi:10.1007/978-1-4612-0851-8}}.

\bibitem[ST15]{ST15}
Joseph~H. Silverman and John~T. Tate.
\newblock {\em Rational points on elliptic curves}.
\newblock Undergraduate Texts in Mathematics. Springer, Cham, second edition,
  2015.
\newblock \href {https://doi.org/10.1007/978-3-319-18588-0}
  {\path{doi:10.1007/978-3-319-18588-0}}.

\bibitem[Tau43]{Taussky43}
Olga Taussky.
\newblock {Prof. David Hilbert, For.Mem.R.S.}
\newblock {\em Nature}, 152:182--183, 1943.
\newblock URL: \url{https://doi.org/10.1038/152182a0}.

\bibitem[{The}23]{SageMath}
{The Sage Developers}.
\newblock {\em {S}ageMath, the {S}age {M}athematics {S}oftware {S}ystem
  ({V}ersion 9.8)}, 2023.
\newblock URL: \url{https://www.sagemath.org}.

\bibitem[T{\"u}r15]{Tur15}
Seyfi T{\"u}rkelli.
\newblock Connected components of {H}urwitz schemes and {M}alle's conjecture.
\newblock {\em J. Number Theory}, 155:163--201, 2015.
\newblock \href {https://doi.org/10.1016/j.jnt.2015.03.005}
  {\path{doi:10.1016/j.jnt.2015.03.005}}.

\bibitem[V{\"{o}}l96]{Vol96}
Helmut V{\"{o}}lklein.
\newblock {\em Groups as {G}alois groups}, volume~53 of {\em Cambridge Studies
  in Advanced Mathematics}.
\newblock Cambridge University Press, Cambridge, 1996.
\newblock An introduction.
\newblock \href {https://doi.org/10.1017/CBO9780511471117}
  {\path{doi:10.1017/CBO9780511471117}}.

\bibitem[Yan10]{Yan10}
Tonghai Yang.
\newblock The {C}howla-{S}elberg formula and the {C}olmez conjecture.
\newblock {\em Canad. J. Math.}, 62(2):456--472, 2010.
\newblock \href {https://doi.org/10.4153/CJM-2010-028-x}
  {\path{doi:10.4153/CJM-2010-028-x}}.

\bibitem[Yan13]{Yan13}
Tonghai Yang.
\newblock Arithmetic intersection on a {H}ilbert modular surface and the
  {F}altings height.
\newblock {\em Asian J. Math.}, 17(2):335--381, 2013.
\newblock \href {https://doi.org/10.4310/AJM.2013.v17.n2.a4}
  {\path{doi:10.4310/AJM.2013.v17.n2.a4}}.

\bibitem[Yis20]{yistvanblog}
Yistvan.
\newblock Transformational {M}usic {T}heory (17): wreath products, 2020.
\newblock From the blog Random thoughts about music and math, science and
  sometimes legos...
\newblock URL:
  \url{https://alpof.wordpress.com/2020/04/10/transformational-music-theory-17/}.

\bibitem[YZ18]{YZ18}
Xinyi Yuan and Shou-Wu Zhang.
\newblock On the averaged {C}olmez conjecture.
\newblock {\em Ann. of Math. (2)}, 187(2):533--638, 2018.
\newblock \href {https://doi.org/10.4007/annals.2018.187.2.4}
  {\path{doi:10.4007/annals.2018.187.2.4}}.

\end{thebibliography}
\bibliographystyle{alphaurl}

\end{document}